\setlist[enumerate]{label={(\arabic*)}}
\crefname{equation}{}{}
\newcommand\phantomref[1]{\setbox0=\hbox{\ref{#1}}}
\newcommand{\refcheckize}[1]{%
  \expandafter\let\csname @@\string#1\endcsname#1%
  \expandafter\DeclareRobustCommand\csname relax\string#1\endcsname[1]{%
    \csname @@\string#1\endcsname{##1}\wrtusdrf{##1}}%
  \expandafter\let\expandafter#1\csname relax\string#1\endcsname
}
\numberwithin{equation}{section}
\newtheorem{lemma}{Lemma}[section]
\newtheorem{theorem}[lemma]{Theorem}
\newtheorem{proposition}[lemma]{Proposition}
\newtheorem{corollary}[lemma]{Corollary}
\newtheorem{fact}[lemma]{Fact}
\newtheorem{convention}[lemma]{Convention}
\theoremstyle{definition}
\newtheorem{remark}[lemma]{Remark}
\newcommand\opr[1]{\operatorname{#1}}
\newcommand{\eps}{\epsilon}
\newcommand{\vareps}{\varepsilon}
\def\R{\mathbf{R}}
\def\C{\mathrm{C}}
\def\Zint{\mathbf{Z}}
\def\F{\mathbf{F}}
\def\Z{\mathrm{Z}}
\def\Prob{\mathbf{P}}
\def\P{\mathcal{P}}
\def\calQ{\mathcal{Q}}
\def\I{\mathcal{I}}
\def\norm{\mathcal{N}}
\newcommand\br[1]{{\left(#1\right)}}
\newcommand\floor[1]{\left\lfloor{#1}\right\rfloor}
\newcommand\ceil[1]{\left\lceil{#1}\right\rceil}
\newcommand{\gen}[1]{\langle{#1}\rangle}
\newcommand{\ind}[1]{\left[#1\right]}
\newcommand{\GL}{\opr{GL}}
\newcommand{\SL}{\opr{SL}}
\newcommand{\SO}{\opr{SO}}
\newcommand{\SU}{\opr{SU}}
\newcommand{\GU}{\opr{GU}}
\newcommand{\Or}{\opr{O}}
\newcommand{\PGL}{\opr{PGL}}
\newcommand{\Sp}{\opr{Sp}}
\newcommand{\POm}{\opr{P\Omega}}
\newcommand{\PSL}{\opr{PSL}}
\newcommand{\PSU}{\opr{PSU}}
\newcommand{\PSp}{\opr{PSp}}
\newcommand{\PGamL}{\opr{P\Gamma L}}
\newcommand{\Soc}{\opr{Soc}}
\newcommand{\Hom}{\opr{Hom}}
\newcommand{\Aut}{\opr{Aut}}
\def\ca{\mathcal}
\def\Inndiag{\opr{Inndiag}}
\def\sm{\smallsetminus}
\def\derang{\delta_{\textup{cc}}}
\def\derangss{\delta_{\textup{cc,ss}}}
\def\dim{\opr{dim}}
\def\nsgp{\trianglelefteq}
\begin{document}

\title[Conjugacy classes of derangements]{Conjugacy classes of derangements in finite groups of Lie type}

\author{Sean Eberhard}
\address{Sean Eberhard, Mathematical Sciences Research Centre, Queen's University Belfast, Belfast BT7~1NN, UK}
\email{s.eberhard@qub.ac.uk}

\author{Daniele Garzoni}
\address{Daniele Garzoni, Department of Mathematics, University of Southern California, Los Angeles, CA 90089-2532, USA}
\email{garzoni@usc.edu}

\thanks{SE is supported by the Royal Society. DG has been partially supported by a grant of the Israel Science Foundation No.
702/19, and has received funding from the European Research Council (ERC) under the European
Union’s Horizon 2020 research and innovation programme (grant agreement No. 850956).}

\begin{abstract}
    Let $G$ be a finite almost simple group of Lie type acting faithfully and primitively on a set $\Omega$. We prove an analogue of the Boston--Shalev conjecture for conjugacy classes: the proportion of conjugacy classes of $G$ consisting of derangements is bounded away from zero. This answers a question of Guralnick and Zalesski.
    The proof is based on results on the anatomy of palindromic polynomials over finite fields (with either reflective symmetry or conjugate-reflective symmetry).
\end{abstract}
\maketitle

\setcounter{tocdepth}{1}
\tableofcontents

\section{Introduction}

\subsection{Boston--Shalev for conjugacy classes}

Let $G$ be a finite group acting transitively on a set $\Omega$. An element $g\in G$ is called a \emph{derangement} if it acts without fixed points on $\Omega$. The study of derangements has a long history, going back to the origins of permutation group theory in the 19th century.


An elementary lemma of Jordan asserts that, if $G$ is finite and $|\Omega| \geq 2$, then $G$ contains a derangement. This lemma has nice applications to topology and number theory; see, for instance, Serre \cite{serre2003theorem}.

Given this result, it is natural to ask whether transitive permutation groups must contain many derangements. Let $\delta(G,\Omega)$ be the proportion of derangements of $G$ on $\Omega$. Cameron--Cohen \cite{cameron_cohen} showed that $\delta(G, \Omega) \geq 1/|\Omega|$, and that this bound is attained if and only if $G$ is a $2$-transitive Frobenius group.

In many cases, one can obtain much stronger bounds. One of the motivations for this paper is the following theorem of {\L}uczak--Pyber  \cite{luczak1993random} (for alternating groups) and Fulman--Guralnick~\cites{fulman2003derangements, fulman2012_conjugacy_classes, fulman_guralnick_subspace, fulman2018extension_field} (for groups of Lie type), confirming a conjecture posed independently by Boston and Shalev.

\begin{theorem} ({\L}uczak--Pyber, Fulman--Guralnick)
\label{t_boston_shalev}
Let $G$ be a finite simple group acting transitively on a set $\Omega$ with $|\Omega|\geq 2$. Then $\delta(G,\Omega)\geq \delta$ for an absolute constant $\delta>0$.
\end{theorem}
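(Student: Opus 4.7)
The plan is to reduce to primitive actions and then invoke the classification of finite simple groups (CFSG), handling each family on its own terms. Since $G$ is simple, any transitive action corresponds to the coset action on $G/H$ for some subgroup $H$; embedding $H$ in a maximal subgroup $M$, every derangement on $G/M$ lifts to a derangement on $G/H$, so without loss of generality $H$ is maximal and the action is primitive. CFSG then splits the problem into: the $26$ sporadic groups (a finite check), alternating groups $A_n$, and finite groups of Lie type. Cyclic groups of prime order are trivial, as their only transitive actions are regular.

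For $G = A_n$ I would use the O'Nan--Scott theorem to stratify the primitive actions into \emph{standard} actions (on $k$-subsets or uniform partitions of $\{1,\ldots,n\}$) and \emph{non-standard} actions. For the non-standard ones, Liebeck's bound $|H| \leq 4^n$ is available, and one shows that a positive proportion of elements of $A_n$ have a cycle of prime length $p$ in the window $n/2 < p \leq n-3$; Jordan's theorem then forbids such elements from lying in any proper primitive subgroup, so they are automatically derangements. The standard actions are treated by direct inclusion-exclusion on cycle types, since the fixed points on $k$-subsets or partitions are easy to parametrise.

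For $G$ of Lie type I would apply Aschbacher's theorem to list the maximal subgroups of a classical group, splitting them into the geometric classes $\mathcal{C}_1,\ldots,\mathcal{C}_8$ and the almost-simple class $\mathcal{S}$, and, in parallel, splitting primitive actions into \emph{subspace actions} (stabilizers of totally singular or non-degenerate subspaces, Aschbacher class $\mathcal{C}_1$) and \emph{non-subspace actions}. For non-subspace actions one has orbit--stabilizer bounds of the shape $|H| \leq q^{O(n)}$ against $|G| \geq q^{n^2/c}$, so counting a single family of elements with prescribed cycle type (e.g.\ regular semisimple elements of a well-chosen torus type) already outnumbers the union $\bigcup_g g H g^{-1}$ of non-derangements. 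For subspace actions one must count derangements directly: an element $g \in \GL_n(q)$ is a derangement on $k$-dimensional subspaces iff its characteristic polynomial has no monic factor of degree $k$, and Fulman's cycle indices for the classical groups $\GL_n(q)$, $\Sp_{2n}(q)$, $\SO_n(q)$, $\GU_n(q)$ yield the requisite positive lower bound on the proportion of such $g$. Exceptional groups of Lie type have bounded rank and only finitely many types of maximal subgroup, so they are handled similarly but more easily.

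The main obstacle will be the subspace-action case for classical groups: here neither $|H|$ nor $|G|/|H|$ is small, so derangements must be counted essentially on the nose. The technical heart consists of cycle-index generating-function identities for the classical groups, together with an asymptotic analysis of the coefficients tracking polynomials with no factor of a prescribed degree. As the abstract of the present paper foreshadows, the conjugacy-class analogue of this theorem will require a further, more refined study of palindromic polynomials over $\F_q$ (with reflective or conjugate-reflective symmetry), since conjugacy classes are parametrised not by individual elements but by their rational canonical forms.
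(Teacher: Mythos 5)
This statement (\Cref{t_boston_shalev}) is not proved in the paper: it is a cited theorem, attributed to {\L}uczak--Pyber for alternating groups and Fulman--Guralnick for groups of Lie type, and the paper's contribution is the conjugacy-class analogue. So there is no ``paper proof'' to compare against. (The paper does remark, in \S1.2, that \Cref{t_boston_shalev} for \emph{non-alternating} simple groups can be deduced from the paper's \Cref{thm:main-simple-case}, but that reverses the logical dependence you are attempting and is not spelled out.) With that caveat, your sketch does capture the architecture of the cited proofs: the reduction to primitive actions by passing to an overgroup of the stabilizer, the CFSG trichotomy, O'Nan--Scott for $A_n$, Aschbacher's classification with the subspace/non-subspace dichotomy for classical groups, and Fulman's cycle-index machinery for the hard subspace case.

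There is, however, one concrete error in the alternating-group branch. You claim ``a positive proportion of elements of $A_n$ have a cycle of prime length $p$ in the window $n/2 < p \le n-3$.'' The proportion of permutations in $S_n$ having a cycle of length $p$ for a single fixed $p$ is $1/p$, and these events are essentially disjoint when $p > n/2$, so the probability of having some such cycle is $\sum_{n/2 < p \le n-3} 1/p$, which by Mertens' theorem is $\sim \log\!\big(\tfrac{\log n}{\log(n/2)}\big) \to 0$. So this set of elements is vanishingly small, not of positive density; it certainly shows derangements \emph{exist} (Jordan's theorem already gives that), but it cannot by itself give a uniform lower bound $\delta>0$. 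To handle the non-standard primitive actions of $A_n$ one needs the genuine {\L}uczak--Pyber result, which bounds the measure of $\bigcup_H H$ over proper transitive $H \ne A_n, S_n$ by a quantity tending to $0$ via a more delicate union bound over conjugacy classes of primitive groups, not by producing a positive-density set of permutations with Jordan-forcing cycle structure. And for the standard actions (on $k$-subsets and on uniform partitions) one needs a separate anatomical estimate, uniform in $k$ up to $k \sim n/2$, of the type surveyed in \cites{ford--toolkit, EFG}; this is not a one-line inclusion--exclusion, especially near $k = n/2$. Your sketch of the Lie-type half is accurate as a description of the Fulman--Guralnick strategy, but note it is spread over several papers (\cites{fulman2003derangements, fulman2012_conjugacy_classes, fulman_guralnick_subspace, fulman2018extension_field}) precisely because the subspace-action analysis is lengthy.
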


Since the property of being a derangement is closed under conjugation,
it is also natural to consider the proportion of \emph{conjugacy classes} consisting of derangements or, equivalently, the probability that a uniformly random conjugacy class contains derangements.
We denote by $\derang(G, \Omega)$ the proportion of conjugacy classes of $G$ whose members act as derangements on $\Omega$.
In this paper we answer a question of Guralnick and Zalesski by confirming that the analogue of the Boston--Shalev conjecture holds for conjugacy classes in non-alternating finite simple groups (see \cite{guralnick2016conjugacy}*{p.~121}).

\begin{theorem}
    \label{thm:main-simple-case}
    Let $G$ be a finite simple group of Lie type acting transitively on a set $\Omega$ with $|\Omega| \ge 2$.
    Then $\derang(G, \Omega) \ge \eps$ for an absolute constant $\eps > 0$.
\end{theorem}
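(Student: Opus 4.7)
The plan is to reduce \Cref{thm:main-simple-case} to a combinatorial problem about random palindromic polynomials over $\F_q$. First, by replacing $\Omega$ with the set of blocks of a minimal $G$-invariant partition, we may assume the action is primitive, so $\Omega \cong G/H$ for some maximal subgroup $H \leq G$; this is legitimate because derangements on a block system pull back to derangements on $\Omega$, and since $G$ is simple the induced action on blocks is still faithful. A conjugacy class $C$ then consists of derangements if and only if $C \cap H = \emptyset$. Next, it suffices to restrict attention to semisimple classes: in a finite simple group of Lie type over $\F_q$, both the total number of conjugacy classes and the number of semisimple classes are polynomials in $q$ of the same degree (namely the rank) with bounded ratio, so a positive lower bound on the proportion $\derangss(G,\Omega)$ of semisimple classes avoiding $H$ yields the desired lower bound on $\derang(G,\Omega)$.

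For classical groups, semisimple conjugacy classes admit a well-known bijective parametrization by characteristic polynomial data: for $\GL_n(q)$ or $\SL_n(q)$ this is just a monic polynomial of degree $n$ with nonzero constant term, while for the unitary, orthogonal, and symplectic types one obtains polynomials with an appropriate reflective (or conjugate-reflective) palindromic symmetry. Aschbacher's theorem now splits the maximal subgroups $H$ into the geometric classes $\mathcal{C}_1, \dots, \mathcal{C}_8$ and an almost-simple class $\mathcal{S}$, and for each family one translates the condition ``some $g$ in the class $C$ lies in $H$'' into an explicit combinatorial condition on the palindromic polynomial $f_C$ associated to $C$: the subspace class $\mathcal{C}_1$ corresponds to $f_C$ having a proper palindromic factor of a given type, the imprimitive class $\mathcal{C}_2$ to a factor with a specified multiplicity pattern, the extension-field class $\mathcal{C}_3$ to $f_C$ lying in the image of a palindromic norm map, and so on. The task then reduces to showing that, for each such condition, a positive proportion of palindromic polynomials fail to satisfy it.

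This is the ``anatomy of palindromic polynomials'' announced in the abstract, and it is where the main technical effort lies. The principal obstacle will be the subspace case $\mathcal{C}_1$: one must show that, uniformly in $n$ and $q$, a positive proportion of palindromic polynomials of degree $n$ have no proper palindromic factor of degree at most (say) $n/2$ of the relevant type. This is the palindromic analogue of the classical fact that a positive proportion of random polynomials of degree $n$ are irreducible, but it is subtler because the palindromic symmetry forces symmetric factor pairs, so one cannot simply import results from the non-palindromic theory; a genuine study of reciprocal and conjugate-reciprocal factorizations is required. The extension-field class $\mathcal{C}_3$ likewise needs new input, since one must rule out $f$ being a norm from \emph{every} proper subfield simultaneously.

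Finally, the exceptional groups of Lie type all have bounded rank, so there are only finitely many structural families of maximal subgroups $H$; in each case one exhibits a positive proportion of regular semisimple classes avoiding $H$ by a direct Lie-theoretic argument. For the $\mathcal{S}$-type subgroups of classical groups one uses that $|H|$ grows as a polynomial in $q$ of strictly smaller degree than $|G|$, so the number of classes meeting $H$ is negligible compared to the total number of semisimple classes, and the contribution from $\mathcal{S}$ is absorbed into the main bound.
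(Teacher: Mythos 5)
Your proposal captures the overall architecture of the paper's proof quite well: reduce to primitive actions, discard non-semisimple classes via Fulman--Guralnick's count, translate semisimple classes into palindromic polynomials, run through Aschbacher's classes, and treat bounded rank separately. The main technical engine---Poisson-type tail bounds for the number of irreducible factors of palindromic polynomials, in the spirit of the multiplication table problem---is exactly what the paper develops. However, several of your translations of the geometric classes into polynomial conditions are inaccurate in ways that would derail the argument if carried out literally, and one key tool is missing.

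The most serious issue is your formulation of the $\mathcal{C}_1$ obstacle. You propose showing that ``a positive proportion of palindromic polynomials of degree $n$ have no proper palindromic factor of degree at most $n/2$.'' But since a palindromic $f$ of degree $n$ has a palindromic factor of degree $\le n/2$ if and only if it has \emph{any} proper palindromic factor (take $g$ or $f/g$), your condition is equivalent to ``$f$ is $*$-irreducible,'' and the density of those vanishes like $1/n$---so no uniform positive lower bound exists. What the paper actually proves is: for each \emph{individual} $k$ with $1\le k\le n/2$, a positive proportion of palindromic $f$ have no palindromic divisor of degree $k$. This is why a single maximal $\mathcal{C}_1$ subgroup (stabilizer of a fixed $k$) is avoided, not all of them at once; the bound $H^*_a(n,k) \ll |\mathcal{P}^*_a(n)| k^{-\delta}(1+\log k)^{-1/2}$ of \Cref{prop:mult-problem} handles large $k$, while \Cref{prop:bounded_degree} handles bounded $k$.

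Your descriptions of $\mathcal{C}_2$ and $\mathcal{C}_3$ are also off. The $\mathcal{C}_2$ imprimitive case is not a ``multiplicity pattern'' condition: the paper splits the wreath product top group by cycle count, shows the low-cycle part contributes negligibly many classes, and observes the high-cycle part forces a large invariant subspace, reducing to $\mathcal{C}_1$; the totally singular case (e.g.\ $\GL_{n/2}(q^2).2 \le \GU_n(q)$) is handled by Shintani descent together with \Cref{prop:ff*}. The $\mathcal{C}_3$ extension-field case does not come down to a norm-image condition; instead the relevant polynomial condition is ``property $P_r$'' (every irreducible factor has degree or multiplicity divisible by $r$), treated in \Cref{prop:p_r}, and again Shintani descent is needed to handle the outer cosets of the extension-field subgroup. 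You do not mention Shintani descent at all, and it is genuinely load-bearing in $\mathcal{C}_2$ and $\mathcal{C}_3$; there is no elementary substitute visible. Finally, the bounded-rank (and in particular exceptional) case is handled in the paper not by a direct Lie-theoretic construction but by citing the known Boston--Shalev theorem for elements plus the density of regular semisimple elements; your ``direct argument'' would need to be supplied.
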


It is easy to see that the exclusion of alternating groups is necessary.
The conjugacy classes of $S_n$ are parameterized bijectively by partitions of $n$, and a uniformly random partition of $n$ has $n^{1/2-o(1)}$ singletons with high probability.
Hence the elements of (asymptotically) almost every conjugacy class of $S_n$ have many fixed points in any of its low-degree permutation representations.
Since approximately half of the conjugacy classes of $S_n$ are contained in $A_n$, the same follows for $A_n$.
See also \cite{erdos_szalay} for a striking stronger result.

We point out that \Cref{thm:main-simple-case} follows from Fulman--Guralnick \cite{fulman2012_conjugacy_classes} if $G$ has bounded rank, and if $G$ has large rank provided the point stabilizer is not a subspace stabilizer, a stabilizer of a direct sum decomposition, or an extension field subgroup. These cases, which we address in this paper, are the key ones, and new methods and ideas are required. See \Cref{subsec:relation,subsec:anatomy} for more details.

More generally we consider the case of almost simple groups acting primitively.
Recall that a group $G$ is \emph{almost simple} if there is a nonabelian simple group $S$ such that $S \le G \le \Aut(S)$.
It is known that \Cref{t_boston_shalev} fails for almost simple groups in general: the proportion of derangements can be roughly $1/\log |\Omega|$ (see \cite{fulman2003derangements}).
However, it turns out that for the conjugacy class weighting we can extend to the almost simple case.

\begin{theorem}
    \label{t_main}
    Let $G$ be a finite almost simple group of Lie type acting faithfully and primitively on a set $\Omega$.
    Then $\derang (G,\Omega) \geq \eps$ for an absolute constant $\eps>0$.
\end{theorem}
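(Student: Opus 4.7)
The plan is to bootstrap from \Cref{thm:main-simple-case} while handling the outer cosets directly. Let $S \le G \le \Aut(S)$ with $S$ simple of Lie type, and let $H$ be a maximal subgroup of $G$. Faithfulness and primitivity force $G = SH$ (otherwise $S$ lies in the core of $H$, contradicting faithfulness). Since $S \trianglelefteq G$, every $G$-conjugacy class lies in a unique coset of $S$; write $G = \bigsqcup_\sigma S\sigma$. A naive averaging of \Cref{thm:main-simple-case} over cosets loses a factor $[G:S]$, which is unbounded as the rank or field grows, so one must produce derangement classes coset by coset.

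The workhorse will be the characteristic polynomial on the natural module $V$ of $S$. For $G$ classical, an element $g$ in the inner coset has $p_g(x)$ palindromic (in the $\Sp$ and $\Or$ cases) or conjugate-palindromic (in the unitary case), and an element in an outer coset corresponding to a graph, field, or graph--field automorphism produces a suitably twisted characteristic polynomial that again enjoys a reflective or conjugate-reflective symmetry --- precisely the palindromic regime advertised in the abstract. Aschbacher's theorem (with Liebeck--Seitz for class $\mathcal{S}$ and for exceptional groups) shows that if $g \in H$, then $p_g$ must exhibit a specific factorisation pattern: a factor of prescribed degree for subspace stabilisers; a compatible factorisation for imprimitive or tensor-product subgroups; descent to a subfield for subfield subgroups; and a bounded-degree irreducible factor for class $\mathcal{S}$ and for subgroups of fixed rank. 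It will therefore suffice to show that a positive proportion of (conjugate-)palindromic polynomials avoid every such pattern simultaneously, which is the content of the polynomial anatomy results of the paper. Converting density of polynomials into density of classes is routine once one restricts to regular semisimple elements, where the class-to-polynomial map has bounded fibres.

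The main obstacle, as expected, is the outer-coset analysis: different automorphism types act on $V$ in structurally distinct ways and produce distinct reflective symmetry classes of twisted characteristic polynomials, each requiring a tailored anatomy statement and a tailored description of how $G$-classes in $S\sigma$ are cut out of $\sigma$-twisted $S$-classes. A secondary difficulty is the subspace-stabiliser class $\mathcal{C}_1$, historically the bottleneck in Boston--Shalev: here the sharp density of (conjugate-)palindromic polynomials over $\F_q$ that lack a divisor of degree $k$ for \emph{every} $k < n/2$ drives the whole argument and does not reduce to the statistics of unconstrained polynomials. The final step is a small-case analysis for groups of bounded rank (including the exceptional groups), where direct computation supplants the asymptotic polynomial estimates.
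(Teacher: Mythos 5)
Your overall plan diverges from the paper's at the critical juncture, and the divergence is a conceptual error rather than a mere difference of route. You observe that naive averaging of \Cref{thm:main-simple-case} over cosets of $S$ loses a factor of $|G:S|$, and you conclude that ``one must produce derangement classes coset by coset.'' That is the wrong fix. The paper's key new ingredient for the almost simple case (\Cref{thm:conj_classes_almost_simple}) is the estimate $k(G) = k(N)/|G:N| + O(q^{r-1+a})$, where $N = G \cap \Inndiag(S)$. This says that the outer cosets contribute \emph{negligibly} to $k(G)$: Shintani descent shows that a coset like $N\sigma$ for a field automorphism $\sigma$ of order $e$ contains only $\asymp q^{r/e}$ classes of $N$, so the total number of $N$-classes outside $N$ is $O(q^{r-1+a})$, which is lower order than $k(N) \asymp q^r/|I:N|$. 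Consequently the $G$-classes are essentially accounted for by the $N$-classes inside $N$, fusing in orbits of size exactly $|G:N|$; the factor $|G:N|$ that you feared losing from Lemma~\ref{lem:passing-to-a-subgroup} is recovered because $k(G)$ itself is smaller by the same factor. You do not need to find a single derangement class in any outer coset — they simply do not matter.

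Beyond this, the tool you propose for the outer cosets is unlikely to work as stated. For a field-automorphism coset there is no ``twisted characteristic polynomial'' on the natural module over the original field: an element $g\sigma$ acts semilinearly, and the correct tool is Shintani descent, which identifies its $N$-class with a conjugacy class of a \emph{smaller} group over a smaller field, not with a palindromic polynomial over $\F_q$. (For the graph automorphism in type $A$ the transpose-inverse coset does interact nicely with $*$-symmetry, which is exactly why that coset is related to $\GU$ and $\Sp$; but this does not generalise to field and graph--field cosets.) So even the bookkeeping you imagine --- anatomy statements tailored per coset --- would need to be replaced with the Shintani/Lang--Steinberg arguments of Section~5 (Lemma~\ref{l_shintani_descent}, Lemma~\ref{l:stable}, and the case analysis in the proof of the lemma preceding \Cref{thm:conj_classes_almost_simple}). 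Finally, a minor point: for large rank the paper parametrises \emph{semisimple} classes directly by polynomials (Lemma~\ref{l_semisimple_classes_characteristic_polynomial} and the discussion around \eqref{eq_linear}--\eqref{eq_orthogonal_odd}), rather than passing to regular semisimple elements with bounded fibres; the latter device is used only in the bounded-rank case (\Cref{prop:bounded-rank}). Your account of the inner, diagonally-almost-simple part (palindromic anatomy, Aschbacher classes, $\ca C_1$ as the bottleneck) is otherwise aligned with the paper's \Cref{t_c1,t_c2,t_c3,thm:main-quasisimple-classes-4+}.
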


In the course of the proof we will also see that \Cref{t_main} holds for groups $G$ in the intervals $\SL_n(q) \le G \le \GL_n(q)$ and $\SU_n(q) \le G \le \GU_n(q)$, provided that $G'$ acts nontrivially.

To extend \Cref{thm:main-simple-case} to the full almost simple case, the main extra ingredient needed is an estimate of independent interest for the number of conjugacy classes of $G$ when $G$ contains field automorphisms of $S$. Denoting by $k(-)$ the number of conjugacy classes, we prove that\footnote{Here $X \asymp Y$ means there are implicit constants $c_1, c_2 > 0$ such that $c_1 X \le Y \le c_2 X$. Later we will similarly use $X \ll Y$ to mean $X \le C Y$ for some constant $C$, or in other symbols~$X = O(Y)$. This is standard notation in analytic number theory.}

\[
    k(G) \asymp \frac{k(N)}{|G : N|},
\]
where $N = G \cap \Inndiag(S)$. For example, $k(\PGamL_n(p^f))\asymp k(\PGL_n(p^f))/f$. Since by \cite{fulman2012_conjugacy_classes} we know the value of $k(N)$ (up to a multiplicative absolute constant), this determines the value of $k(G)$ (up to a multiplicative absolute constant) for any almost simple group of Lie type $G$. See \Cref{thm:conj_classes_almost_simple} for the precise formulation of the estimate, which also gives the asymptotic of $k(G)$ as the size of the field of definition tends to infinity.

\subsection{Relation to Boston--Shalev for elements}
\label{subsec:relation}

In general, the distribution on a finite group $G$ defined by the uniform distribution on conjugacy classes can be drastically different from the uniform distribution on elements.
We saw this already in the case of the symmetric group: while the number of fixed points of a random element of $S_n$ is approximately Poisson-distributed with mean $1$, a random conjugacy class of $S_n$ is associated to a random partition of $n$, which typically has around $n^{1/2}$ singletons.

Nevertheless, the connection between the two distributions is closer in the case of a finite simple group of Lie type, at least if we restrict to regular semisimple elements. For groups of bounded rank $r$ and level $q$, every conjugacy class consisting of regular semisimple elements satisfies
\begin{equation}
    \label{eq:cc-to-elements}
    c_r |G| / q^r < |g^G| < C_r |G| / q^r
\end{equation}
for constants $c_r, C_r > 0$. (This holds since $\C_G(g)$ is an extension of a maximal torus by a group of $r$-bounded order; see \cite[Theorem 14.2 and Proposition 25.2]{malle_testerman}.)
This simple observation enables us to deduce the conjugacy class result directly from the element result in the bounded-rank case.

As the rank grows the bounds \eqref{eq:cc-to-elements} deteriorate, and it is no longer trivial to compare the conjugacy class distribution to the elements distribution. However, formulas for the size of $|\C_G(g)|$ (see \cite[Proposition 25.2]{malle_testerman} again) show that a proportion $1-\epsilon$ of regular semisimple conjugacy classes have order at least $c_\eps |L| / q^r$, where $L$ is the universal covering  group of $G$.
Since in the proof of \Cref{thm:main-simple-case} we will count regular semisimple classes, and since $k(G)\asymp q^r/|L:G|$, we can deduce the original Boston--Shalev conjecture (\Cref{t_boston_shalev}) for non-alternating groups from \Cref{thm:main-simple-case}.

For the reverse deduction, one would have to show that a  proportion $1-\epsilon$ of the regular semisimple elements of $G$ are contained in conjugacy classes of size bounded by $C_\epsilon |L| / q^r$.
This statement is subtler than the previous one, but still it is likely true.

In fact we use a different and more direct approach. Bounds by Fulman--Guralnick \cite{fulman2012_conjugacy_classes} allow us to restrict to semisimple conjugacy classes. Then, we use the well-known correspondence between semisimple classes and polynomials to translate all the relevant questions into analogous questions about polynomials, which we study using function-field analytic number theory.
The results we establish for polynomials are of independent interest and described in the next section.

The resulting proof has some advantages. Notably, our method is uniform in $q$ (small $q$ do not receive special treatment), and in large rank it does not use any estimates for the proportion of regular semisimple elements. This is a key feature of our proof, which makes it suitable for certain applications, where the use of the aforementioned estimates is problematic. In addition we get the following result:

\begin{theorem}
	\label{t:semisimple_asymptotic}
    For every $\eps>0$, there exist $f_i(\eps)$, $i=1, \dots, 3$ such that the following holds. Assume that $G$ is a finite simple group of Lie type of rank $r$, acting primitively on a set $\Omega$ with $|\Omega|\geq 2$, and assume that the proportion of semisimple conjugacy classes in $G$ which contain derangements is bounded by $1-\eps$. Then, for $\alpha \in \Omega$, one of the following holds:
    \begin{enumerate}[(i)]
        \item $G$ is classical and $G_\alpha$ is the stabilizer of a subspace of dimension or codimension at most $f_1(\eps)$;
        \item $(G,G_\alpha) = (\Sp_{2r}(q), \SO^\pm_{2r}(q))$ with $q$ even;
        \item $r\leq f_2(\eps)$ and $G_\alpha$ is a maximal subgroup of maximal rank;
        \item $|G|\leq f_3(\eps)$.
    \end{enumerate}
\end{theorem}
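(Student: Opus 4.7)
The plan is by contrapositive: if none of (i)--(iv) holds for suitable choices of $f_1, f_2, f_3$, we show $\derangss(G,\Omega) > 1 - \eps$. A first reduction sends the exceptional groups of Lie type into (iii) using their bounded rank (or into (iv) if $G_\alpha$ is not of maximal rank), after which we may assume $G$ is classical. Moreover, at bounded rank $r \le f_2(\eps)$ the relation \eqref{eq:cc-to-elements} transfers the derangement bound from elements to conjugacy classes, so that \Cref{t_boston_shalev} and a large enough choice of $f_3(\eps)$ absorb all bounded-rank classical cases into (iv). Thus we may assume $G$ is a classical simple group of unbounded rank $r$ with $|G|$ unbounded, and analyse according to the Aschbacher class of $G_\alpha$.

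The core of the argument is a case-by-case analysis using the correspondence between semisimple conjugacy classes of $G$ and palindromic polynomials of the appropriate symmetry type (ordinary, conjugate-palindromic, or palindromic, for the linear, unitary, and symplectic/orthogonal cases respectively). The condition that a semisimple $g$ has a $G$-conjugate in $H \in \mathcal{C}_i$ translates into a structural constraint on the palindromic polynomial: a divisor of prescribed degree for $\mathcal{C}_1$; a factorization of prescribed block or tensor shape for $\mathcal{C}_2, \mathcal{C}_4, \mathcal{C}_7$; realization as a norm from an extension field for $\mathcal{C}_3$; definition over a proper subfield for $\mathcal{C}_5$; an incompatible extra form-symmetry for $\mathcal{C}_8$ outside case (ii). The conditions for classes $\mathcal{C}_2,\ldots,\mathcal{C}_5,\mathcal{C}_7,\mathcal{C}_8$, and for $\mathcal{C}_1$ with subspace dimension $k \notin [f_1(\eps), r - f_1(\eps)]$, are rare among palindromic polynomials by the palindromic polynomial anatomy results developed earlier in the paper. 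Classes $\mathcal{C}_6$ and $\mathcal{S}$ need no polynomial input: general bounds give $|H| \ll q^{O(\sqrt{r})}$, which is much smaller than the number $\asymp q^r$ of semisimple conjugacy classes of $G$, so these subgroups meet a vanishing proportion of semisimple classes.

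The principal obstacle is the middle regime of $\mathcal{C}_1$, where the subspace dimension $k$ lies in $[f_1(\eps), r - f_1(\eps)]$: a typical palindromic polynomial has many small irreducible factors, and subset sums of their degrees cover large portions of $[0, r]$, so establishing a uniform lower bound on the proportion of palindromic polynomials with no divisor of degree $k$ demands a delicate subset-sum estimate of function-field analytic flavour. A secondary obstacle is $\mathcal{C}_3$ with small extension degree, where the subgroup is comparatively large and the norm constraint comparatively mild. Both are handled by the palindromic polynomial anatomy, which is the technical heart of the paper, and whose necessity is precisely what motivates the program advertised in the abstract.
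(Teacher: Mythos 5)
Your bounded-rank reduction contains the main gap. You claim that at rank $r \le f_2(\eps)$, \eqref{eq:cc-to-elements} together with \Cref{t_boston_shalev} ``absorb all bounded-rank classical cases into (iv).'' This fails for two independent reasons. First, outcome (iii) is genuinely needed in the bounded-rank classical case: a fixed group $G = \PSp_4(q)$, say, with $G_\alpha$ a maximal torus normalizer or parabolic of maximal rank, gives an infinite family in $q$ for which a positive proportion of semisimple classes meet $G_\alpha$, so $\derangss \le 1-\eps$ can persist as $|G|\to\infty$. These cases cannot land in (iv), since $|G|$ is unbounded; they land in (iii), and your plan does not produce them. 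Second, even after restricting to $G_\alpha$ \emph{not} of maximal rank, \Cref{t_boston_shalev} only gives $\delta(G,\Omega) \ge \delta > 0$, and \eqref{eq:cc-to-elements} then yields $\derang(G,\Omega) \gg_r 1$ -- a positive lower bound on the proportion of conjugacy classes of derangements, but certainly not the statement $\derangss(G,\Omega) > 1-\eps$ which the contrapositive requires. To get a ``close to $1$'' bound one must use the fact that for non-maximal-rank $G_\alpha$ the number of semisimple classes meeting $G_\alpha$ is $O(q^{r-1})$ (because $G_\alpha$ contains no maximal torus, so $k(G_\alpha) \ll q^{r-1}$), against $k(G) \asymp q^r$. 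This is what \cite{fulman2003derangements} and \Cref{prop:bounded-rank} supply in the paper's proof, and Boston--Shalev alone cannot replace it.

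The unbounded-rank part of your plan is a genuinely different (and heavier) route than the paper's. You propose dedicated polynomial-anatomy translations for each Aschbacher class $\mathcal C_2, \ldots, \mathcal C_8, \mathcal S$, but the paper only does this for $\mathcal C_1, \mathcal C_2, \mathcal C_3$ (\Cref{t_c1,t_c2,t_c3}); for $\mathcal C_4, \ldots, \mathcal C_8, \mathcal S$ it uses the single counting bound $k(M) \ll q^{(r+1)/2}$ from \cite{fulman2012_conjugacy_classes}, which trivially dominates $k(G) \asymp q^r$ at large $r$ (\Cref{thm:main-quasisimple-classes-4+}). Your approach would require new anatomy input (subfield and tensor-factorisation conditions on palindromic polynomials) that the paper neither develops nor needs, and the crude size estimate $|H| \ll q^{O(\sqrt r)}$ you quote for $\mathcal C_6, \mathcal S$ is not literally correct as stated. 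This is worth fixing, but the bounded-rank gap above is the decisive one: as written, the contrapositive argument does not close.
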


In words, if the proportion of semisimple classes of $G$ containing derangements is bounded away from $1$, then either $G$ is a classical group and $G_\alpha$ is the stabilizer of a subspace of bounded dimension or codimension, or $(G,G_\alpha) = (\Sp_{2r}(q), \SO^\pm_{2r}(q))$, or $G$ has bounded rank and $G_\alpha$ has maximal rank. The reverse implication also holds. See \Cref{r:tends_to_one,t_c2,t_c3,thm:main-quasisimple-classes-4+} for quantitative bounds on the proportion of semisimple classes containing derangements.

We recall that maximal subgroups of maximal rank are maximal subgroups containing a maximal torus. For classical groups, these are subgroups of Aschbacher's classes $\ca C_1$, $\ca C_2$, and $\ca C_3$ (see, e.g., \cite{garzoni_mckemmie}*{Theorems~5.2 and 5.5} for a precise statement). For exceptional groups, see \cite{liebeck_saxl_seitz} for a classification.

\subsection{Anatomy of palindromic polynomials}
\label{subsec:anatomy}
The most important cases of \Cref{t_main} turn out to be closely connected with some results on the anatomy of palindromic polynomials over finite fields.
Here we call a polynomial \emph{palindromic} if its coefficients satisfy either reflective symmetry or conjugate-reflective symmetry.

Loosely speaking, the subject of \textit{anatomy} studies how the building blocks of a mathematical object are assembled together, particularly when the object is chosen at random. The building blocks of an integer are the prime divisors, so the anatomy of integers is concerned with the study of divisors of a random integer. Analogously, the anatomy of permutations is concerned with sets fixed by a random permutation, and the anatomy of polynomials with divisors of a random polynomial. We refer, for instance, to \cites{ford, ford--toolkit, EFG} for motivation and results in this rich subject.

Let us now focus on polynomials over finite fields. Let $\P$ be the set of monic polynomials over $\F_q$ and let $\P(n)$ be the set of those of degree $n$.
An interesting anatomical quantity is $H(n,k)$, the number of polynomials $f \in \P(n)$ having a divisor of degree $k$.
It was proved by Meisner~\cite{meisner}*{Theorem~1.2} (using methods related to those of \cites{ford,EFG}, where the analogues for integers and permutations were proved) that
\begin{equation}
    \label{eq:Hnk}
    H(n, k) \asymp \frac{q^n}{k^\delta (1 + \log k)^{3/2}}
\end{equation}
uniformly for $1 \le k \le n/2$, where
\[
    \delta = 1 - \frac{1 + \log \log 2}{\log 2} = 0.086\dots.
\]

Now, given a monic polynomial $f(X) = X^n + a_{n-1} X^{n-1} + \cdots + a_1 X + a_0$ with coefficients in $\F_q$ and $a_0 \ne 0$ we define
\[
    f^*(X) = X^n f(1/X) / f(0) = X^n + (a_1/a_0) X^{n-1} + \cdots + (a_{n-1}/a_0) X + (1/a_0).
\]
We say $f$ is \emph{$*$-symmetric} if $f^*=f$.
Similarly, if $q$ is a square we denote by $x \mapsto \bar x = x^{q^{1/2}}$ the involutory automorphism and we define
\[
    f^\dagger(X) = X^n \bar f(1/X) / \bar f(0) = X^n + (\bar a_1 / \bar a_0) X^{n-1} + \cdots + (\bar a_{n-1} / \bar a_0) X + (1 / \bar a_0).
\]
We say $f$ is \emph{$\dagger$-symmetric} if $f^\dagger = f$.
We use the term \emph{palindromic} generally to refer to either of these cases (this is not a standard usage, but it is convenient for us).

We remark that $*$-symmetric polynomials are closely related to ordinary polynomials. Indeed, each $*$-symmetric polynomial degree $2n$ with constant coefficient $1$ can be written (uniquely) in the form
\begin{equation}
    \label{eq:star-symmetric-reduction}
    f(X) = X^n g(X+1/X),
\end{equation}
with $g\in \mathcal P(n)$.

Crucial to the proof of \Cref{t_main} are several results in the anatomy of palindromic polynomials.
The key property that enables us to prove results about the anatomy of generic palindromic polynomials
is the multiplicative rule $(fg)^* = f^*g^*$, which holds provided $f(0), g(0) \ne 0$.
(Note however that we do not have additivity $(f+g)^* = f^* + g^*$.)
Similarly  $(fg)^\dagger = f^\dagger g^\dagger$ provied $f(0), g(0) \ne 0$.

Let $H^*(n, k)$ be the number of $*$-symmetric $f \in \P(n)$ having a $*$-symmetric factor of degree $k$, and define $H^\dagger(n, k)$ similarly. We will need, for example, to bound $H^*(n,k)$ and $H^\dagger(n,k)$; to bound the number of palindromic polynomials factorizing nearly as $gg^*$ or $gg^\dagger$; to show that approximately half of the $*$-symmetric polynomials of degree $n$ have an even number of irreducible factors. Thanks to \eqref{eq:Hnk} and \eqref{eq:star-symmetric-reduction}, the first task is immediate for $*$-symmetric polynomials:
\[
    H^*(n, k) \asymp \frac{q^{\floor{n/2}}}{k^\delta (1 + \log k)^{3/2}}.
\]
In the $\dagger$-symmetric case, we will prove that
\[
    H^\dagger(n, k) \ll \frac{q^{n/2}}{k^\delta (1 + \log k)^{1/2}}.
\]
See \Cref{prop:mult-problem,rem:mult-table-star-symmetric-case}\phantomref{rem:mult-table-star-symmetric-case} for these results. The latter is only a rough analogue of \eqref{eq:Hnk}, because the exponent of $(1 + \log k)$ is wrong and we do not prove a matching lower bound. Most likely the true analogue holds, but we do not pursue it because we do not need it for the application to derangements. Propositions \ref{prop:ff*}--\ref{prop:bounded_degree} contain all other results that we need.

We note, in particular, that many of the results do not follow trivially just from \eqref{eq:star-symmetric-reduction} and an analogous result for ordinary polynomials. Moreover in the $\dagger$-symmetric case we are not aware of a relation analogous to \eqref{eq:star-symmetric-reduction}.

The starting point of our analysis is a prime polynomial theorem for palindromic polynomials (\Cref{prop:PNT}), which can be proved quite easily. The main technical ingredient is a certain Poisson-type estimate (see \Cref{prop:poisson-tail}), which is directly inspired by an analogue for permutations (\cite{ford--toolkit}*{Theorem 1.5}). Most of the subsequent results rely essentially on this estimate.

\subsection{Notation}

\begin{itemize}[label=$\diamond$]
\item As usual $k(G)$ denotes the number of conjugacy classes of $G$.
\item $\delta(G, \Omega)$ is the proportion of elements of $G$ which are derangements on $\Omega$. $\derang(G, \Omega)$ is the proportion of conjugacy classes of $G$ containing derangements.
\item We may write $\delta(G, H)$ for $\delta(G, \Omega)$ if $G$ is transitive on $\Omega$ and $H$ is a point stabilizer (so we may identify $\Omega$ with $G/H$), and similarly $\derang(G, H)$.
\item Occasionally we use the Iverson bracket $\ind{E}$ to denote the indicator function of an event $E$, such as $\ind{q~\text{odd}}$.
\item For notation related to classical groups and algebraic groups, see \Cref{sec:preliminaries}.
\end{itemize}

\subsection{Acknowledgements} We thank Bob Guralnick for an explanation on \cite{fulman2012_conjugacy_classes}*{Lemma~5.4}.
We are also grateful to Lior Bary-Soroker for drawing our attention to the relationship $f(X) = X^n g(X+1/X)$ between $*$-symmetric polynomials and ordinary polynomials, and to Nick Gill for helpful discussions.

\section{Anatomy of palindromic polynomials}
\label{sec:anatomy}

Given a monic polynomial $f(X) = X^n + a_{n-1} X^{n-1} + \cdots + a_1 X + a_0$ with coefficients in $\F_q$ and $a_0 \ne 0$ we define (as in the introduction)
\[
    f^*(X) = X^n f(1/X) / f(0) = X^n + (a_1/a_0) X^{n-1} + \cdots + (a_{n-1}/a_0) X + (1/a_0).
\]
Note that $(fg)^* = f^* g^*$ provided $f(0), g(0) \ne 0$.
We say $f$ is \emph{$*$-symmetric} if $f^*=f$.
Note this implies $a_0 = \pm 1$.
Similarly, if $q$ is a square we denote by $\bar x = x^{q^{1/2}}$ the involutory automorphism and we define
\[
    f^\dagger(X) = X^n \bar f(1/X) / \bar f(0) = X^n + (\bar a_1 / \bar a_0) X^{n-1} + \cdots + (\bar a_{n-1} / \bar a_0) X + (1 / \bar a_0).
\]
Again $(fg)^\dagger = f^\dagger g^\dagger$ provided $f(0)g(0) \ne 0$, and we say $f$ is \emph{$\dagger$-symmetric} if $f^\dagger = f$.
Note this implies $a_0 \in U$, where $U \le \F_q^\times$ is the cyclic subgroup of order $q^{1/2}+1$.

Let $\P, \P^*, \P^\dagger$ be the sets of monic polynomials over $\F_q$ which are unrestricted, $*$-symmetric, and (if $q$ is a square) $\dagger$-symmetric respectively.
Let $\P(n), \P^*(n), \P^\dagger(n)$ be the sets of those of degree $n$.
We are interested in the anatomies of typical elements of the following sets of polynomials:
\begin{align*}
    & \P_a(n) = \{f \in \P(n) : f(0) = a\} && (a \in \F_q),\\
    & \P_a^*(n) = \{f \in \P^*(n) : f(0) = a\} && (a = \pm 1), \\
    & \P_a^\dagger(n) = \{f \in \P^\dagger(n) : f(0) = a\} && (a \in U).
\end{align*}
For reference we record the sizes of these sets (for $n \ge 1$):
\begin{equation}
\label{eq:P_a-size}
\begin{aligned}
    &|\P_a(n)| = q^{n-1}, \\
    &|\P_a^*(n)| = \begin{cases}
        q^{n/2} &: n~\text{even},~a = +1,\\
        q^{n/2-1} &: n~\text{even},~a = -1,~q~\text{odd}\\
        q^{(n-1)/2} &: n~\text{odd},~a = \pm 1
    \end{cases}\\
    &|\P_a^\dagger(n)| = q^{(n-1)/2}.
\end{aligned}
\end{equation}

In the $*$-symmetric case, it is almost always sufficient to consider $\P_1^*(2n)$, because
\begin{equation}
    \label{eq:star-reductions}
    \begin{aligned}
        &\P_{-1}^*(n) = \{(X-1) f : f \in \P_1^*(n-1)\},\\
        &\P_1^*(2n+1) = \{(X+1) f :f \in \P_1^*(2n)\}.
    \end{aligned}
\end{equation}
Moreover, as mentioned in the introduction, it is easy to see that
\[
    \P_1^*(2n) = \{X^n g(X+1/X) : g \in \P(n)\}.
\]
This relationship will enable us in some cases to reduce questions about $*$-symmetric polynomials to corresponding questions about ordinary polynomials.

\subsection{Prime polynomial theorems}

Let $\pi_q(n,a), \pi^*_q(n,a), \pi^\dagger_q(n,a)$ denote the number irreducible $f \in \P_a(n), \P_a^*(n), \P_a^\dagger(n)$, respectively.
Also let $\pi_q(n) = \sum_{a \in \F_q} \pi_q(n,a)$, etc.
There is a well-known formula $\pi_q(n) = (q^n - O(q^{n/2})) / n$ analogous to the prime number theorem for integers.
There are similar formulas for $\pi^*_q(n)$ and $\pi^\dagger_q(n)$.
The formula for $\pi_q^*(n)$ is due to Carlitz~\cite{carlitz} (see also Cohen~\cite{cohen}).
The formula for $\pi^\dagger_q(n)$ may have first appeared as \cite{fulman1999cycle}*{Theorem~9}.
These formulas can be viewed as concrete cases of Chebotarev's density theorem.

Below we establish similar formulae for $\pi_q(n, a), \pi_q^*(n, a), \pi_q^\dagger(n, a)$.
The first part may be viewed as a special case of the prime polynomial theorem in arithmetic progressions.
Our approach seems to be more direct than those of \cites{carlitz,cohen,fulman1999cycle}.

\begin{proposition}
\label{prop:PNT}
\leavevmode
\begin{enumerate}
\item Let $q$ be a prime power, $n \ge 1$, $a \in \F_q^\times$, and let $r_d(a)$ denote the number of $d$-th roots of $a$ in $\F_q^\times$. Then
\begin{align*}
    \pi_q(n, (-1)^n a)
    = \frac1n \sum_{d \mid n} \mu(d) r_d(a) \frac{q^{n/d}-1}{q-1}
    = \frac{q^n-1 - \eps q^{n/2}}{n(q-1)},
\end{align*}
where $0 \le \eps \le 10$.
\item Let $q$ be a prime power. Apart from $X\pm 1$, all $*$-symmetric monic irreducible polynomials have even degree and constant coefficient $+1$. Their number is given by
\begin{align*}
    \pi^*_q(2n)
    = \frac1{2n} \br{\sum_{\textup{odd}~d \mid n} \mu(d) (q^{n/d} - \eta)}
    = \frac{q^n - \eps q^{n/3}}{2n},
\end{align*}
where $0 \le \eps \le 3$ and $\eta = \ind{q~\textup{odd}}$.
\item Let $q$ be a square prime power. All $\dagger$-symmetric monic irreducible polynomials have odd degree and constant coefficient in $U$. Conversely for any $a \in U$ we have
\begin{align*}
    \pi^\dagger_q(2n-1, -a)
    = \frac1{2n-1} \sum_{d \mid 2n-1} \mu(d) r_d(a) \frac{q^{(n-1/2) / d} + 1}{q^{1/2} + 1}
    = \frac{q^{n-1/2} + 1 - \eps q^{n/3-1/6}}{(2n-1)(q^{1/2}+1)},
\end{align*}
where $r_d(a)$ is the number of $d$-th roots of $a$ in $U$.
Here $0 \le \eps \le 40$.
\end{enumerate}
\end{proposition}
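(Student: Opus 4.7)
The plan is to treat all three parts by the same template: characterize the irreducible polynomials of each class in terms of a multiplicative condition on their roots in a splitting field, count elements of each subfield satisfying that condition, and apply Möbius inversion over the degree to isolate generators. The main estimates then follow by bounding error terms crudely with $r_d(a) \le d$ and noting that $\sum_{d \mid n, d \ge 2} q^{n/d}$ is dominated by $q^{n/p}$ for $p$ the smallest prime factor of $n$.

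For (1), irreducible $f \in \P_{(-1)^n a}(n)$ correspond $n$-to-$1$ with $\alpha \in \F_{q^n}$ of degree $n$ over $\F_q$ satisfying $N_{\F_{q^n}/\F_q}(\alpha) = a$. For $d \mid n$ and $\alpha \in \F_{q^d}$, transitivity gives $N_{\F_{q^n}/\F_q}(\alpha) = N_{\F_{q^d}/\F_q}(\alpha)^{n/d}$, and since the norm $\F_{q^d}^\times \to \F_q^\times$ is surjective with fibres of size $(q^d-1)/(q-1)$, there are $r_{n/d}(a)(q^d-1)/(q-1)$ good elements in $\F_{q^d}$; Möbius inversion (with change of variables $d \to n/d$) then yields the formula.

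For (2), an irreducible $*$-symmetric $f$ of degree $n \ge 2$ has a Frobenius orbit of roots closed under $\alpha \mapsto \alpha^{-1}$. A short analysis shows this forces $n = 2m$ even and $\alpha^{q^m+1} = 1$, placing the roots in $K := \ker(N_{\F_{q^{2m}}/\F_{q^m}})$ of order $q^m+1$; pairing $\alpha^{q^i}$ with $\alpha^{q^{i+m}} = \alpha^{-q^i}$ shows $f(0) = +1$. Counting $|K \cap \F_{q^d}|$ by cases (if $d \mid m$ the condition collapses to $\alpha^2=1$, giving $1+\eta$ solutions; otherwise $d=2e$ with $m/e$ odd and the condition becomes $\alpha^{q^e+1}=1$, giving $q^e+1$), Möbius inversion together with the vanishing of $\mu(2n/d)$ when $n/d$ is even reduces the sum to one over odd divisors of $n$ and yields the formula.

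For (3), the same template applies with $\phi:x\mapsto x^{q^{1/2}}$: closure of the root orbit under $\alpha \mapsto 1/\bar\alpha$ combined with the condition that $\alpha$ generate $\F_{q^N}$ over $\F_q$ forces $N$ odd and $\phi^N(\alpha)=\alpha^{-1}$, i.e., $\alpha^{q^{N/2}+1}=1$. Transitivity of norm then gives $N_{\F_{q^N}/\F_q}(\alpha)^{q^{1/2}+1}=1$, so $f(0)\in -U$. As in (2), $|K \cap \F_{q^d}| = q^{d/2}+1$ for $d \mid N$, and a short gcd computation (leveraging $d$ odd) shows the restricted norm $K \cap \F_{q^d}\to U$ is surjective with fibres of size $(q^{d/2}+1)/(q^{1/2}+1)$; counting norm-$a$ elements picks up a factor of $r_{N/d}(a)\cdot(q^{d/2}+1)/(q^{1/2}+1)$, and Möbius inversion completes the proof. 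The main obstacle will be the Galois-orbit analyses in (2) and (3), particularly disentangling the action of the half-Frobenius $\phi$ in (3) on a $\phi^2$-orbit; everything else is bookkeeping.
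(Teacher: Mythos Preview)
Your proposal is correct and follows essentially the same approach as the paper: in each case you identify the roots of the relevant irreducible polynomials with elements of a specific cyclic subgroup of $\overline{\F_q}^\times$ (via a norm or Galois-orbit condition), count intersections with subfields, and apply M\"obius inversion, with the error bounds handled by the same crude estimate $r_d(a)\le d$ and geometric-series comparison. The only cosmetic difference is that in part~(2) you invert over all divisors of $2n$ and observe the terms with $4\mid 2n/d$ vanish, whereas the paper first proves that the minimal $n$ with $x^{q^n}=x^{-1}$ determines the orbit up to odd multiples and inverts directly over odd divisors of $n$; the two computations are equivalent.
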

\begin{proof}
\begin{enumerate}[wide]
\item
Let $\norm : \F_{q^n} \to \F_q$ be the norm map.
If $e \mid n$, the number of elements $x \in \F_{q^e}$ such that $\norm(x) = a$ is $r_{n/e}(a) (q^e-1)/(q-1)$, where $r_d(a)$ is the number of $d$-th roots of $a$ in $\F_q$.
Hence if $F(e)$ is the number of such $x$ of degree $e$ (i.e., not contained in a proper subfield) then
\begin{equation}
    \label{eq:sum-F(d)}
    \sum_{d \mid e} F(d) = r_{n/e}(a) (q^e - 1)/(q-1).
\end{equation}
Hence by M\"obius inversion
\[
    F(n) = \sum_{d \mid e} \mu(d) r_d(a) (q^{n/d}-1) / (q-1).
\]
Dividing by $n$ gives the number of monic irreducible polynomials of degree $n$ with constant coefficient $(-1)^n a$.
This proves the claimed formula.
For the estimate, note that $F(n) \le (q^n-1)/(q-1)$ (by \eqref{eq:sum-F(d)}) and
\[
    \left|\sum_{d \mid n, d < n} \mu(d) r_d(a) (q^{n/d}-1)\right| \le \sum_{d \mid n, d < n} d q^{n/d}
    \le 2 q^{n/2} + n q^{n/3} / (1-1/q)
    \le 10 q^{n/2}.
\]

\item
We follow a similar pattern.
The minimal polynomial of $x \in \bar \F_q^\times$ is $*$-symmetric if and only if $x$ is conjugate to $x^{-1}$ under the Galois group, i.e., if and only if
\[
    x^{q^n} = x^{-1}
\]
for some $n \ge 0$.
If $n = 0$ then $x = \pm 1$.
Otherwise, the norm of $x$ is $1$, being the product of a collection of pairs $y,y^{-1}$,
and if $n$ is minimal then the degree of $x$ is $2n$, for clearly
\[
    x^{q^{2n}} = x^{-q^n} = x,
\]
and if $x^{q^m} = x$ then
\[
    x^{q^{m-n}} = (x^{-q^n})^{q^{m-n}} = x^{-q^m} = x^{-1},
\]
which implies $n \le m-n$, i.e., $m \ge 2n$.
Moreover, if $n$ is minimal then $x^{q^m} = x^{-1}$ if and only if $m$ is an odd multiple of $n$.

Let $F^*(n)$ be the number of such $x$ of degree $2n$.
If $q$ is odd there are exactly $q^n-1$ solutions to $x^{q^n+1} = 1$ apart from $\pm1$,
while if $q$ is even there are $q^n$ solutions apart from $1$, so
\[
    \sum_{\text{odd}~d \mid n} F^*(n/d) = q^n - \eta.
\]
Applying M\"obius inversion,
\[
    F^*(n) = \sum_{\text{odd}~d \mid n} \mu(d) (q^{n/d} - \eta).
\]
Dividing by $2n$ gives the number of $*$-symmetric monic irreducible polynomials of degree $2n$.
This proves the exact formula, and the estimate follows as in case (1).

\item
The minimal polynomial of $x \in \bar \F_q^\times$ is $\dagger$-symmetric if and only if
\begin{equation}
    \label{eq:1/xbar}
    x^{q^n} = x^{-q^{1/2}}
\end{equation}
for some $n \ge 1$.
Note this is equivalent to $x^{q^{n-1/2}} = x^{-1}$.
If $n$ is minimal then the degree of $x$ is $2n-1$, for
\[
    x^{q^{2n-1}} = x^{-q^{n-1/2}} = x,
\]
and if $x^{q^m} = x$ then
\[
    x^{q^{m-n+1}} = (x^{-q^{n-1/2}})^{q^{m-n+1}} = x^{-q^{1/2}},
\]
which implies $n \le m - n + 1$, i.e., $m \ge 2n-1$.
Moreover, if $n$ is minimal then $x^{q^m} = x^{-q^{1/2}}$ if and only if $2n-1 \mid 2m-1$.

Let $\norm : \F_{q^{2n-1}} \to \F_q$ be the norm map.
Explicitly, $\norm(x) = x^{1 + q + \cdots + q^{2n-2}} = x^{(q^{2n-1} - 1)/(q-1)}$.
The subgroup $U_n$ of $\F_{q^{2n-1}}^\times$ consisting of solutions to \eqref{eq:1/xbar} is cyclic of order $q^{n-1/2}+1$.
Now note that
\[
    \frac{q^{2n-1}-1}{q-1} = \frac{q^{n-1/2}-1}{q^{1/2}-1} \cdot \frac{q^{n-1/2}+1}{q^{1/2}+1},
\]
and the first factor is prime to $q^{n-1/2}+1$, so $\norm$ maps $U_n$ onto the cyclic group $U = U_1$ of order $q^{1/2}+1$.
Thus for every $a \in U$ there are exactly $(q^{n-1/2}+1)/(q^{1/2}+1)$ elements $x \in U_n$ such that $\norm(x) = a$.

Fix any such $a$. For $e \mid 2n-1$, the number of elements $x \in \F_{q^e} \cap U_n$ such that $\norm(x) = a$ is $r_{(2n-1)/e}(a) (q^{e/2}+1)/(q^{1/2}+1)$, where $r_d(a)$ is the number of $d$-th roots of $a$ in $V$.
Hence if $F^\dagger(e)$ is the number of such $x$ of degree $e$ then
\[
    \sum_{d \mid e} F^\dagger(d) = r_{(2n-1)/e}(a) (q^{e/2} + 1)/(q^{1/2} + 1).
\]
Applying M\"obius inversion,
\[
    F^\dagger(2n-1) = \sum_{d \mid 2n-1} \mu(d) r_d(a) (q^{(n-1/2)/d} + 1) / (q^{1/2} + 1).
\]
Like in the previous two cases, we get the number of $\dagger$-symmetric monic irreducible polynomials of degree $2n-1$, with constant coefficient $-a$, by dividing by $2n-1$, and we can estimate the sum straightforwardly by comparing with a geometric series.\qedhere
\end{enumerate}
\end{proof}

\subsection{Poisson-type estimates for the number of irreducible factors}

\def\nstar{{\not{\, *}}}
\def\ndagger{{\not{\, \dagger}}}

Next we prove Poisson-type tail estimates for the number of square-free polynomials with exactly a given number of irreducible factors in specified sets.

Let $H_n = 1 + 1/2 + \cdots + 1/n = \log n + O(1)$ denote the harmonic sum.
Also define $H_n^{\text{even}} = \sum_{\text{even}~k \le n} 1/k = H_{\floor{n/2}} / 2$ and $H_n^\text{odd} = \sum_{\text{odd}~k \le n} 1/k = H_n - H_{\floor{n/2}}/2$.

Let $\I_{\le n}$ be the set of irreducible polynomials of degree $\le n$.
For $I \subset \I_{\le n}$ let
\[
    H(I) = \sum_{f \in I} q^{-\deg f}.
\]
We need one calculation in particular:
\[
    H(\I_{\le n}) = \sum_{d=1}^n \frac{\pi_q(d)}{q^d}
    = \sum_{d=1}^n \frac{1}{d} (1 + O(q^{-d/2}))
    = \log n + O(1).
\]

A $*$-symmetric polynomial $f$ is called \emph{$*$-irreducible} if it has no proper $*$-symmetric divisor of positive degree; in other words, if it is irreducible or of the form $gg^*$ with $g$ irreducible and non-$*$-symmetric.
We use the term $\dagger$-irreducible similarly.

Let $\I^*_{\le n}$ be the set of $*$-irreducible polynomials of degree $\le n$.
For $I \subset \I_{\le n}^*$ let
\[
    H^*(I) = \sum_{f \in I} q^{-(\deg f)/2}.
\]
For reference, from \Cref{prop:PNT}(2),
\begin{align*}
    H^*(\I^*_{\le n})
    &= \sum_{d=1}^{n/2} \frac{\pi_q^*(2d)}{q^d} + \sum_{d=1}^{n/2} \frac{\pi_q^\nstar(d)/2}{q^d} + (1 + \eta) q^{-1/2}\\
    &= \sum_{d=1}^{n/2} \frac{1}{2d} (1 + O(q^{-2d/3})) + \sum_{d=1}^{n/2} \frac{1}{2d} (1 + O(q^{-d/2}))
    = \log n + O(1).
\end{align*}
Here $\pi_q^\nstar(d) = \pi_q(d) - \pi_q^*(d) - \ind{d=1}$ denotes the number of non-$*$-symmetric irreducible polynomials of degree $d$ apart from $X$.
Define $\I^\dagger_{\le n}$ and $H^\dagger(I)$ for $I \subset \I^\dagger_{\le n}$ similarly (if $q$ is a square), and again
\begin{align*}
    H^\dagger(\I^\dagger_{\le n})
    &= \sum_{d=1}^n \frac{\pi_q^\dagger(d)}{q^{d/2}} + \sum_{d=1}^{n/2} \frac{\pi_q^\ndagger(d)/2}{q^d}\\
    &= \sum_{\text{odd}~d \le n} \frac{1}{d} (1 + O(q^{-d/3})) + \sum_{d=1}^{n/2} \frac{1}{2d} (1 + O(q^{-d/2}))
    = \log n + O(1).
\end{align*}
Here $\pi_q^\ndagger(d) = \pi_q(d) - \pi_q^\dagger(d) - \ind{d=1}$.

The following proposition is modelled after \cite{ford--toolkit}*{Theorem~1.5}.

\begin{proposition}
\label{prop:poisson-tail}
We have the following Poisson-type tail bounds.
\begin{enumerate}
\item
Let $I_1, \dots, I_r$ be an arbitrary partition of $\I_{\le n}$ and let $m_1, \dots, m_r \ge 0$.
The number of square-free $f \in \P_a(n)$ (for any $a \in \F_q^\times$) with exactly $m_i$ factors in $I_i$ for each $i$ is
\[
    \ll q^{n-1} \prod_{i=1}^r \br{e^{-H(I_i)} \frac{H(I_i)^{m_i}}{m_i!}} \br{\frac{m_1}{H(I_1)} + \cdots + \frac{m_r}{H(I_r)}}.
\]

\item
Let $I_1, \dots, I_r$ be an arbitrary partition of $\I^*_{\le 2n}$ and let $m_1, \dots, m_r \ge 0$.
The number of square-free $f \in \P^*_1(2n)$ with exactly $m_i$ factors in $I_i$ for each $i$ is
\[
    \ll q^n \prod_{i=1}^r \br{e^{-H^*(I_i)} \frac{H^*(I_i)^{m_i}}{m_i!}} \br{\frac{m_1}{H^*(I_1)} + \cdots + \frac{m_r}{H^*(I_r)}}.
\]

\item
Let $I_1, \dots, I_r$ be an arbitrary partition of $\I^\dagger_{\le n}$ and let $m_1, \dots, m_r \ge 0$.
The number of square-free $f \in \P^\dagger_a(n)$ (for any $a \in U$) with exactly $m_i$ factors in $I_i$ for each $i$ is
\[
    \ll q^{(n-1)/2} \prod_{i=1}^r \br{e^{-H^\dagger(I_i)} \frac{H^\dagger(I_i)^{m_i}}{m_i!}} \br{\frac{m_1}{H^\dagger(I_1)} + \cdots + \frac{m_r}{H^\dagger(I_r)}}.
\]
\end{enumerate}
\end{proposition}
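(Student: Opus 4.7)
The three parts of the proposition admit essentially the same proof, modelled on \cite{ford--toolkit}*{Theorem~1.5}; I focus on part (1) and indicate the modifications for parts (2) and (3). The strategy is strong induction on $M := \sum_j m_j$, based on the combinatorial recursion obtained by counting pairs $(f, p)$ where $p$ is an irreducible factor of the square-free polynomial $f$:
\[
M \cdot N_a(n; \bar m) = \sum_{j=1}^r \sum_{p \in I_j} N^{(p)}_{a/p(0)}(n - \deg p; \bar m - e_j),
\]
where $\bar m = (m_1, \ldots, m_r)$, $e_j$ is the $j$th standard basis vector, and the superscript $(p)$ denotes the extra constraint $\gcd(\cdot, p) = 1$, which I simply drop to obtain the upper bound.

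The inductive step hinges on the identities
\[
B_0(\bar m - e_j) = \frac{m_j}{H(I_j)} B_0(\bar m), \qquad \tau(\bar m - e_j) = \tau(\bar m) - \frac{1}{H(I_j)},
\]
where $B_0(\bar m) := \prod_k e^{-H(I_k)} H(I_k)^{m_k}/m_k!$ is the Poisson factor and $\tau(\bar m) := \sum_k m_k/H(I_k)$ is the extra factor from the statement. Combining these with $\sum_{p \in I_j} q^{-\deg p} = H(I_j)$, we get
\[
\sum_{j} \sum_{p \in I_j} q^{-\deg p} B_0(\bar m - e_j) \tau(\bar m - e_j) = B_0(\bar m) \sum_j m_j \br{\tau(\bar m) - 1/H(I_j)} = (M - 1) B_0(\bar m) \tau(\bar m).
\]
Inserting the inductive hypothesis $N_b(n'; \bar m') \le C q^{n'-1} B_0(\bar m') \tau(\bar m')$ into the recursion and dividing by $M$ therefore preserves the constant $C$ (with a factor $(M-1)/M$ to spare).

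The base case $M = 1$ reduces to bounding $N_a(n; e_{j_0})$, which counts irreducibles of degree $n$ in $I_{j_0}$ with constant term $a$; this is at most $\pi_q(n, a) \ll q^{n-1}/n$ by \Cref{prop:PNT}(1). The target right-hand side at $\bar m = e_{j_0}$ reduces to $q^{n-1} B_0(e_{j_0}) \tau(e_{j_0}) = q^{n-1} e^{-H(\I_{\le n})} \asymp q^{n-1}/n$, using $H(\I_{\le n}) = \log n + O(1)$; so the base case matches.

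Parts (2) and (3) proceed identically, replacing irreducible factorizations with unique $*$-irreducible (resp.\ $\dagger$-irreducible) factorizations of square-free $*$-symmetric (resp.\ $\dagger$-symmetric) polynomials, and using \Cref{prop:PNT}(2) or (3) in the base case; the harmonic sum estimates $H^*(\I^*_{\le 2n}), H^\dagger(\I^\dagger_{\le n}) = \log n + O(1)$ computed earlier in the section make the base-case estimates align. The main technical obstacle, shared by all three parts, is that the partition is of $\I_{\le n}$ (or its palindromic analogue), whereas the inductive hypothesis at smaller degree $n' = n - \deg p$ naturally involves $H$'s computed over the smaller index set. The discrepancy $H(I_j) - H(I_j \cap \I_{\le n'})$ is $O(1)$ uniformly for $n - n' \le n/2$, while the contribution from $\deg p > n/2$ in the recursion sum is $O(q^{n/2 - 1})$ (using the trivial bound $N_b(n'; \bar m') \le q^{n'-1}$) and hence negligible compared to the main term $\asymp q^{n-1} B_0(\bar m) \tau(\bar m)$. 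These errors are absorbed into the constant $C$.
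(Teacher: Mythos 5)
Your proposal takes a genuinely different route from the paper. The paper multiplies by the total degree $n$ (the identity $\sum_{g\mid f}\deg g=n$), which exactly cancels the $1/d$ density of degree-$d$ irreducibles coming from \Cref{prop:PNT}(1); it then drops the degree constraint $\sum_{i,e} e\, m_{i,e}=n-d$ in the exact combinatorial formula \eqref{eq:S-exact}, and closes the argument with a single application of the multinomial theorem. There is no induction and no scale discrepancy, precisely because the inner sums always extend over the full range $e\le n$. Your proposal instead multiplies by the number of factors $M$ and runs an induction on $M$; this loses the $\deg g$ vs.\ $1/d$ cancellation that makes the paper's argument so clean.

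There is a concrete gap in your dispatch of the $\deg p>n/2$ terms. You claim the contribution there is $O(q^{n/2-1})$, using the trivial bound $N_b(n';\bar m')\le q^{n'-1}$. That is a per-term bound; what you need is the sum over all irreducibles of degree $d>n/2$, and the number of degree-$d$ irreducibles is $\asymp q^d/d$. Thus the total contribution is $\asymp \sum_{n/2<d\le n}\frac{q^d}{d}\cdot q^{n-d-1}\asymp q^{n-1}$, which exceeds the target $M\,q^{n-1}B_0(\bar m)\tau(\bar m)$ whenever $B_0\tau M\ll 1$ — e.g.\ the generic case $B_0\tau\asymp 1/n$ with $M$ bounded. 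This is off by a factor of order $q^{n/2}$. Related to this, the statement that the scale discrepancy $H(I_j)-H(I_j\cap\I_{\le n'})$ ``is absorbed into the constant $C$'' is problematic inside an induction: the inductive hypothesis at scale $n'$ yields $B_0^{(n')}$ and $\tau^{(n')}$ rather than $B_0^{(n)}$, $\tau^{(n)}$, so you must perform the upgrade at every inductive step, and a fixed multiplicative error per step inflates the constant in a way that does not stay uniform. To salvage the inductive approach you would need to apply the inductive bound (not the trivial one) for all $d$, track the scale-$n'$ versus scale-$n$ discrepancy precisely, and show the corrections telescope across the full recursion — a delicate bookkeeping that the paper's one-shot multinomial argument avoids entirely.
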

\begin{proof}
\begin{enumerate}[wide]
\item
Let $S_a(n; m_1, \dots, m_r)$ be the set of square-free $f \in \P_a(n)$ with exactly $m_i$ factors in $I_i$ for each $i$.
Let $S(n; m_1, \dots, m_r) = \bigcup_{a \in \F_q^\times} S_a(n ; m_1, \dots, m_r)$.
We can specify an element of $S(n; m_1, \dots, m_r)$ uniquely by first choosing nonnegative integers $m_{i,d}$ ($1 \le i \le r$, $1 \le d \le n$) subject to
\begin{equation}
\label{eq:poisson-tail-1}
\begin{aligned}
&&\sum_{d=1}^n m_{i,d} = m_i ~ (1 \le i \le r),
&&\sum_{i=1}^r \sum_{d=1}^n d m_{i,d} = n,
\end{aligned}
\end{equation}
and then for each $i, d$ choosing $m_{i,d}$ distinct factors in $I_i^{(d)}$, where $I_i^{(d)}$ is the set of elements of $I_i$ of degree $d$ (excluding $X$ if $d=1$).
Thus
\begin{equation}
    \label{eq:S-exact}
    S(n; m_1, \dots, m_r) = \sum_{\eqref{eq:poisson-tail-1}} \prod_{i=1}^r \prod_{d=1}^n \binom{|I_i^{(d)}|}{m_{i,d}}.
\end{equation}
To use this formula effectively we first apply a sum-smoothing trick.
By isolating an irreducible factor $g \mid f$ we have
\begin{align*}
    |S_a(n; m_1, \dots, m_r)| n
    &= \sum_{f \in S_a(n; m_1, \dots, m_r)} \sum_{\text{irred.}~g \mid f} \deg g \\
    &= \sum_{g \in \I_{\le n}} |\{f \in S_a(n; m_1, \dots, m_r) : g \mid f\}| \deg g.
\end{align*}
Now the $g$-divisible elements of $S_a(n; m_1, \dots, m_r)$ are obviously in one-to-one correspondence with the $g$-indivisible elements of $S_{a/g(0)}(n - \deg g; (m_i - \ind{g \in I_i})_{i=1}^r)$.
Thus by ignoring the $g$-indivisible restriction we get
\[
    |S_a(n; m_1, \dots, m_r)| n
    \le \sum_{g \in \I_{\le n} \sm \{X\}} |S_{a / g(0)}(n - \deg g; (m_i - \ind{g \in I_i})_{i=1}^r)| \deg g.
\]
By \Cref{prop:PNT}(1), $\pi_q(d, b) \le q^d / d (q-1)$, so
\begin{align*}
    |S_a(n; m_1, \dots, m_r) n
    &\le \frac{q}{q-1} \sum_{d=1}^n \sum_{b\in \F_q^\times} \sum_{j=1}^r q^{d-1} |S_{a/b}(n - d; (m_i - \ind{i=j})_{i=1}^r)| \\
    &= \frac{q}{q-1} \sum_{d=1}^n \sum_{j=1}^r q^{d-1} |S(n - d; (m_i - \ind{i=j})_{i=1}^r)|.
\end{align*}
Now applying \eqref{eq:S-exact}, we get
\begin{align*}
    |S_a(n; m_1, \dots, m_r)| n
    &\le \frac{q}{q-1} \sum_{d=1}^n \sum_{j=1}^r q^{d-1} \sum_{\substack{
        \sum_{e=1}^n m_{i, e} = m_i - \ind{i=j}~(1 \le i \le r), \\
        \sum_{i=1}^r \sum_{e=1}^n e m_{i,e} = n - d
    }} \prod_{i=1}^r \prod_{e=1}^n \binom{|I_i^{(e)}|}{m_{i,e}} \\
    &\le \frac{q}{q-1} \sum_{d=1}^n \sum_{j=1}^r q^{d-1} \sum_{\substack{
        \sum_{e=1}^n m_{i, e} = m_i - \ind{i=j}~(1 \le i \le r), \\
        \sum_{i=1}^r \sum_{e=1}^n e m_{i,e} = n - d
    }} \prod_{i=1}^r \prod_{e=1}^n \frac{|I_i^{(e)}|^{m_{i,e}}}{m_{i,e}!} \\
    &\le \frac{q^n}{q-1} \sum_{d=1}^n \sum_{j=1}^r \sum_{\substack{
        \sum_{e=1}^n m_{i, e} = m_i - \ind{i=j}~(1 \le i \le r), \\
        \sum_{i=1}^r \sum_{e=1}^n e m_{i,e} = n - d
    }} \prod_{i=1}^r \prod_{e=1}^n \frac{(|I_i^{(e)}| / q^e)^{m_{i,e}}}{m_{i,e}!} \\
    &\le \frac{q^n}{q-1} \sum_{j=1}^r \sum_{\substack{
        \sum_{e=1}^n m_{i, e} = m_i - \ind{i=j}~(1 \le i \le r)
    }} \prod_{i=1}^r \prod_{e=1}^n \frac{(|I_i^{(e)}| / q^e)^{m_{i,e}}}{m_{i,e}!}.
\end{align*}
Finally, applying the multinomial theorem, we get
\begin{align*}
    |S_a(n ; m_1, \dots, m_r)|n
    &\le \frac{q^n}{q-1} \sum_{j=1}^r \prod_{i=1}^r \frac{\br{\sum_{e=1}^n |I_i^{(e)}| / q^e}^{m_i - \ind{i=j}}}{(m_i - \ind{i=j})!} \\
    &= \frac{q^n}{q-1} e^{H(\I_{\le n})} \prod_{i=1}^r \br{e^{-H(I_i)} \frac{H(I_i)^{m_i}}{m_i!}} \br{\frac{m_1}{H(I_1)} + \cdots + \frac{m_r}{H(I_r)}}.
\end{align*}
Finally, we use the fact that $H(\I_{\le n}) = \log n + O(1)$.

\item We could give a completely analogous argument. Alternatively, we can just use \eqref{eq:star-symmetric-reduction}.
Note that a $*$-symmetric polynomial $f(X) = X^n g(X+1/X)$ (of degree $2n$ and constant coefficient $1$) is square-free if and only if $g$ is square-free and has no factors of $X \pm 2$, and in this case $f$ has no factors of $X \pm 1$. Moreover $f$ is $*$-irreducible if and only if $g$ is irreducible.
Therefore we get the result directly from \eqref{eq:star-symmetric-reduction} and part (1).

\item We give an argument similar to that in part (1).
Let $S_a(n; m_1, \dots, m_r)$ be the set of square-free $f \in \P_a^\dagger(n)$ with exactly $m_i$ factors in $I_i$ for each $i$. Then
\begin{align*}
    |S_a(n ; m_1, \dots, m_r)| n
    &= \sum_{f \in S_a(n; m_1, \dots, m_r)} \sum_{\dagger\text{-irred}~g \mid f} \deg g\\
    &\le \sum_{g \in \I^\dagger_{\le n}} |S_{a / g(0)}(n - \deg g; (m_i - \ind{g \in I_i})_{i=1}^r)| \deg g\\
    &\le \sum_{d=1}^n \sum_{b \in U} \sum_{j=1}^r \pi_q^\dagger(d, b) |S_{a/b}(n - d ; (m_i - \ind{i=j})_{i=1}^r)| d\\
    &\le \sum_{d=1}^n \sum_{b \in U} \sum_{j=1}^r q^{d/2-1/2} |S_{a/b}(n - d ; (m_i - \ind{i=j})_{i=1}^r)| \\
    &\le \sum_{d=1}^n \sum_{j=1}^r q^{d/2-1/2} \sum_{\substack{
        \sum_{e=1}^n m_{i,e} = m_i - \ind{i=j} ~ (1 \le i \le r) \\
        \sum_{i=1}^r \sum_{e=1}^n e m_{i,e} = n - d
    }} \prod_{i=1}^r \prod_{e=1}^n \binom{|I_i^{(e)}|}{m_{i,e}} \\
    &\le \sum_{d=1}^n \sum_{j=1}^r q^{d/2-1/2} \sum_{\substack{
        \sum_{e=1}^n m_{i,e} = m_i - \ind{i=j} ~ (1 \le i \le r) \\
        \sum_{i=1}^r \sum_{e=1}^n e m_{i,e} = n - d
    }} \prod_{i=1}^r \prod_{e=1}^n \frac{|I_i^{(e)}|^{m_{i,e}}}{m_{i,e}!} \\
    &\le q^{n/2-1/2} \sum_{d=1}^{n/2} \sum_{j=1}^r \sum_{\substack{
        \sum_{e=1}^n m_{i,e} = m_i - \ind{i=j} ~ (1 \le i \le r) \\
        \sum_{i=1}^r \sum_{e=1}^n e m_{i,e} = n - d
    }} \prod_{i=1}^r \prod_{e=1}^n \frac{(|I_i^{(e)}| / q^{e/2})^{m_{i,e}}}{m_{i,e}!} \\
    &\le q^{n/2-1/2} \sum_{j=1}^r \sum_{\substack{
        \sum_{e=1}^n m_{i,e} = m_i - \ind{i=j} ~ (1 \le i \le r)
    }} \prod_{i=1}^r \prod_{e=1}^n \frac{(|I_i^{(e)}| / q^{e/2})^{m_{i,e}}}{m_{i,e}!} \\
    &\le q^{n/2-1/2} e^{H^\dagger(\I^\dagger_{\le n})} \prod_{i=1}^r \br{e^{-H^\dagger(I_i)} \frac{H^\dagger(I_i)^{m_i}}{m_i!}} \br{\frac{m_1}{H^\dagger(I_1)} + \cdots + \frac{m_r}{H^\dagger(I_r)}}.
\end{align*}
Now as before we use $H^\dagger(\I^\dagger_{\le n}) = \log n + O(1)$.
\qedhere
\end{enumerate}
\end{proof}

\begin{corollary}
\label{cor:poisson-tail}
\begin{enumerate}
\item
Let $a \in \F_q^\times$, $\ell \ge 0$, and $1 \le k \le n$.
The number of square-free $f \in \P_a(n)$ with exactly $\ell$ irreducible factors of degree $\le k$ is
\[
    \ll \frac{q^{n-1}}{k}  \frac{H_k^\ell}{\ell!}
    \br{1 + \frac{\ell}{H_k}} .
\]
\item
Let $\ell_1, \ell_2 \ge 0$ and $1 \le k_1, k_2 \le 2n$.
The number of square-free $f \in \P^*_1(2n)$ with exactly $\ell_1$ $*$-symmetric irreducible factors of degree $\le k_1$ and $\ell_2$ pairs of non-$*$-symmetric irreducible factors of degree $\le k_2$ is
\[
    \ll
    \frac{q^n}{k_1^{1/2} k_2^{1/2}} \frac{(H_{k_1}^{\textup{even}})^{\ell_1}}{\ell_1!} \frac{(H_{k_2} / 2)^{\ell_2}}{\ell_2!}
    \br{
        1 + \frac{\ell_1}{H_{k_1}} + \frac{\ell_2}{H_{k_2}}
    }.
\]
\item
Let $a \in U$, $\ell_1, \ell_2 \ge 0$, and $1 \le k_1, k_2 \le n$.
The number of square-free $f \in \P^\dagger_a(n)$ with exactly $\ell_1$ $\dagger$-symmetric irreducible factors of degree $\le k_1$ and $\ell_2$ pairs of non-$\dagger$-symmetric irreducible factors of degree $\le k_2$ is
\[
    \ll \frac{q^{n/2-1/2}}{k_1^{1/2} k_2^{1/2}} \frac{(H_{k_1}^\textup{odd})^{\ell_1}}{\ell_1!} \frac{(H_{k_2} / 2)^{\ell_2}}{\ell_2!}
    \br{1 + \frac{\ell_1}{H_{k_1}} + \frac{\ell_2}{H_{k_2}}}.
\]
\end{enumerate}
\end{corollary}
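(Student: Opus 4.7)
The plan is to derive all three parts from Proposition~\ref{prop:poisson-tail} by choosing a coarse partition of the relevant set of irreducibles, applying the Poisson tail bound, and summing out the unconstrained class via the two Poisson identities
\[
    \sum_{m \ge 0} e^{-H} \frac{H^m}{m!} = 1 \qquad \text{and} \qquad \sum_{m \ge 0} e^{-H} \frac{H^m}{m!} \cdot \frac{m}{H} = 1.
\]

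For part~(1), I take the two-piece partition $I_1 = \I_{\le k}$, $I_2 = \I_{\le n} \sm I_1$. Summing the bound of Proposition~\ref{prop:poisson-tail}(1) over $m_2 \ge 0$ collapses to $\ll q^{n-1} e^{-H(I_1)} H(I_1)^\ell/\ell! \cdot (1 + \ell/H(I_1))$, and the identification $H(I_1) = H_k + O(1)$---which is the same computation as $H(\I_{\le n}) = \log n + O(1)$ performed in the text preceding Proposition~\ref{prop:poisson-tail}---finishes the job.

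Parts~(2) and~(3) are identical in structure. I use the three-piece partition of $\I^*_{\le 2n}$ (resp.~$\I^\dagger_{\le n}$) in which $I_1$ consists of $*$-symmetric (resp.~$\dagger$-symmetric) irreducibles of degree $\le k_1$, $I_2$ of $*$-irreducibles $gg^*$ (resp.~$\dagger$-irreducibles $gg^\dagger$) with each of $g, g^*$ (resp.~$g, g^\dagger$) of degree $\le k_2$, and $I_3$ the remainder. Applying Proposition~\ref{prop:poisson-tail}(2) or~(3) and summing out $m_3$ reduces the claim to verifying $H^*(I_1) = H_{k_1}^{\textup{even}} + O(1)$ and $H^*(I_2) = H_{k_2}/2 + O(1)$ in part~(2), and $H^\dagger(I_1) = H_{k_1}^{\textup{odd}} + O(1)$ and $H^\dagger(I_2) = H_{k_2}/2 + O(1)$ in part~(3). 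These follow directly from Proposition~\ref{prop:PNT}(2),~(3) and the observation that $*$-symmetric irreducibles of degree $\ge 2$ have even degree while $\dagger$-symmetric irreducibles have odd degree.

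The main bookkeeping obstacle is ensuring that the $O(1)$ discrepancies between each $H(I_i)$ and its target harmonic sum do not inflate into an $\ell$-dependent multiplicative factor. The way I would handle this is via the one-sided bounds $H(I_1) \le H_k$, $H^*(I_1) \le H_{k_1}^{\textup{even}} + O(1)$, $H^\dagger(I_1) \le H_{k_1}^{\textup{odd}} + O(1)$, and analogues for $I_2$, coming from the elementary counts $\pi_q(d) \le q^d/d$, $\pi_q^*(2d) \le q^d/(2d)$, and $\pi_q^\dagger(2d-1) \le q^{d-1/2}/(2d-1)$; the remaining bounded additive offsets are absorbed into the implicit constant via the exponential prefactor $e^{-H(I_i)} \asymp k^{-1/2}$ (or $k^{-1}$ in part~(1)).
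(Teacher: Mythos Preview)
Your argument matches the paper's approach exactly: the paper spells out only part~(2), using the same three-piece partition $I_1 \cup I_2 \cup I_3$ of $\I^*_{\le 2n}$, applying Proposition~\ref{prop:poisson-tail}(2), summing out $m_3$, and then replacing each $H^*(I_i)$ by the corresponding harmonic sum via Proposition~\ref{prop:PNT}(2).

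The one point to tighten is your final paragraph. The exponential prefactor alone does \emph{not} absorb a positive additive offset in $H(I_i)$: if $H = T + c$ with $c > 0$ then $e^{-H} H^\ell / (e^{-T} T^\ell) = e^{-c}(1+c/T)^\ell$, which is unbounded in $\ell$. What actually makes the replacement work is the \emph{strict} one-sided inequality $H^*(I_1) \le H_{k_1}^{\textup{even}}$ and $H^*(I_2) \le H_{k_2}/2$, which is exactly what your elementary counts $\pi_q^*(2d) \le q^d/(2d)$ etc.\ deliver once $X\pm 1$ are placed in $I_3$ rather than $I_1$; the paper makes this nonnegativity of the errors explicit. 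With that adjustment your bookkeeping goes through, and the lower bounds $H^*(I_i) = (\text{target}) - O(1)$ handle both the $e^{-H^*(I_i)}$ factor and the $\ell_i/H^*(I_i)$ terms.
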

\begin{proof}
Each of these follows directly from \Cref{prop:poisson-tail} by taking the appropriate partition and summing the uninteresting variable.
For example, let us prove (2).
Let $I_1 \subset \I_{\le 2n}^*$ be the set of $*$-symmetric irreducible polynomials of degree $\le k_1$ (excluding $X\pm 1$),
let $I_2 \subset \I_{\le 2n}^*$ be the set of $*$-irreducible products $gg^*$ with $\deg g \le k_2$,
and let $I_3 = \I_{\le 2n}^* \sm (I_1 \cup I_2)$.
By \Cref{prop:poisson-tail}, the number of square-free $f \in \P_1^*(2n)$ with $\ell_1$ factors in $I_1$, $\ell_2$ factors in $I_2$, and $\ell_3$ factors in $I_3$ is
\[
    \ll q^n \prod_{i=1}^3 \br{ e^{-H^*(I_i)} \frac{H^*(I_i)^{\ell_i}}{\ell_i!}} \br{\frac{\ell_1}{H^*(I_1)} + \frac{\ell_2}{H^*(I_2)} + \frac{\ell_3}{H^*(I_3)}}.
\]
The sum over all $\ell_3 \ge 0$ is
\[
    \ll q^n \prod_{i=1}^2 \br{ e^{-H^*(I_i)} \frac{H^*(I_i)^{\ell_i}}{\ell_i!}} \br{\frac{\ell_1}{H^*(I_1)} + \frac{\ell_2}{H^*(I_2)} + 1}.
\]
Now by \Cref{prop:PNT}(2) we have
\begin{align*}
    H^*(I_1) &= \sum_{2d \le k_1} \frac{\pi^*_q(2d)}{q^d} = H^\text{even}_{k_1} - \eps_1, \\
    H^*(I_2) &=  \sum_{d \le k_2} \frac{(\pi_q(d) - \pi^*_q(d))/2}{q^d} = H_{k_2} / 2 - \eps_2,
\end{align*}
where the errors satisfy $0 \le \eps_1, \eps_2 \ll 1$.
Therefore we can replace $H^*(I_1)$ by $H_{k_1}^\text{even}$ and $H^*(I_2)$ by $H_{k_2}/2$, and we get the claimed bound.
\end{proof}

\subsection{Polynomials with a factor of a given degree}

Write $H_a(n,k), H_a^*(n,k), H_a^\dagger(n,k)$ for the number of $f \in \P_a(n), \P_a^*(n), \P_a^\dagger(n)$ (respectively) having an unrestricted, $*$-symmetric, $\dagger$-symmetric (respectively) factor of degree $k$.

\begin{proposition}
\label{prop:mult-problem}
Let $\delta = 1 - (1 + \log \log 2) / \log 2 \approx 0.086$.
Let $1 \le k \le n/2$.
\begin{enumerate}
\item For $a \in \F_q^\times$,
\[
    H_a(n, k) \ll q^{n-1} k^{-\delta} (1+\log k)^{-1/2}.
\]
\item For $a = \pm 1$,
\[
    H^*_a(n, k) \ll |\P_a^*(n)| k^{-\delta} (1 + \log k)^{-1/2}.
\]
\item For $a \in U$,
\[
    H^\dagger_a(n, k) \ll q^{(n-1)/2} k^{-\delta} (1 + \log k)^{-1/2}.
\]
\end{enumerate}
\end{proposition}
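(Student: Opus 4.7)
Part (2) follows directly from Meisner's estimate \eqref{eq:Hnk} via the reduction \eqref{eq:star-symmetric-reduction}. The bijection $\P_1^*(2m) \leftrightarrow \P(m)$ given by $f(X) = X^m g(X + 1/X)$ sends $*$-symmetric divisors of $f$ of degree $2\ell$ bijectively to divisors of $g$ of degree $\ell$, so $H_1^*(2m, 2\ell) = H(m, \ell) \ll q^m / (\ell^\delta (1 + \log \ell)^{3/2})$, which is at most $|\P_1^*(2m)| / ((2\ell)^\delta (1 + \log 2\ell)^{1/2})$. This is the claim for $a = 1$, $n = 2m$ even, and $k = 2\ell$ even; we in fact recover the stronger $(1 + \log k)^{-3/2}$ version in this case. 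The cases of odd $k$, odd $n$, or $a = -1$ reduce to this via \eqref{eq:star-reductions} together with the observation that $*$-symmetric polynomials of odd degree are divisible by $X \pm 1$.

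Parts (1) and (3) are handled by adapting Tenenbaum's approach to the multiplication table problem. Let $\tau(f)$ denote the number of divisors of $f$ of degree $k$ (unrestricted in part~(1); $\dagger$-symmetric in part~(3)). A direct count gives $\E[\tau(f)] \asymp 1$ for uniform $f$ in the ambient set. Since
\[
    \Prob[\tau(f) \ge 1] = \frac{\E[\tau(f)]}{\E[\tau(f) \mid \tau(f) \ge 1]},
\]
it suffices to show that, conditional on $\tau(f) \ge 1$, the polynomial $f$ typically has $\gg k^\delta (1 + \log k)^{1/2}$ divisors of degree $k$. This is proved by a swapping argument: starting from a single divisor $g \mid f$ of degree $k$, we produce many others by exchanging small (degree $\le k$) irreducible factors of $g$ with matched-degree irreducible factors of $f/g$. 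Our Poisson-type estimate \Cref{prop:poisson-tail} guarantees that $f$ typically has enough small (resp.~$\dagger$-) irreducible factors for this exchange procedure to yield the required count.

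The main technical challenge is to make the swapping argument quantitative enough to produce the exponent $k^\delta$. This amounts to a local-central-limit-theorem-type estimate for the number of sub-multisets of the irreducible factor degrees of $f$ summing to $k$, optimized over a doubly-exponential partition of the range of admissible degrees; the exponent $\delta = 1 - (1 + \log\log 2)/\log 2$ emerges from this optimization, exactly as in the classical argument of Tenenbaum. The stronger $(1 + \log k)^{-3/2}$ version would require Ford's refinement, but the $(1 + \log k)^{-1/2}$ version of the present proposition is within the reach of the simpler Tenenbaum-style analysis. In the $\dagger$-symmetric case, the swap involves both $\dagger$-symmetric irreducibles and matched pairs $hh^\dagger$---the two flavors of $\dagger$-irreducibles---and these combine cleanly thanks to \Cref{prop:poisson-tail}(3).
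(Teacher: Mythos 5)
Your treatment of part (2) via the substitution $f(X) = X^m g(X+1/X)$ and Meisner's \eqref{eq:Hnk} is essentially what the paper records in \Cref{rem:mult-table-star-symmetric-case}, and it is valid, though you should be careful that the correspondence between $*$-symmetric divisors of $f$ and ordinary divisors of $g$ is exact only when $f$ is square-free (and so has no $X\pm1$ factors); the general case then needs the standard $f = f_1 f_2^2$ reduction. The paper deliberately gives a direct Poisson-tail proof instead, to keep all three parts uniform.

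For parts (1) and (3), your second-moment framing contains a genuine gap. The identity $\Prob[\tau \ge 1] = \E[\tau]/\E[\tau \mid \tau \ge 1]$ is correct and $\E[\tau] \asymp 1$, so the whole burden falls on the lower bound $\E[\tau \mid \tau\ge 1] \gg k^\delta(1+\log k)^{1/2}$. Your swapping argument only addresses one side of this: it shows that \emph{if} $f$ has a degree-$k$ divisor \emph{and} at least $\log_2 k$ small (degree $\le k$) irreducible factors, then $\tau(f)$ is large. But a polynomial can have a degree-$k$ divisor while having far fewer small irreducible factors---for instance $f = g h$ with $g$ irreducible of degree exactly $k$, or more generally $f = f_1 f_2$ with $f_1, f_2$ each having $O(1)$ small factors---and for such $f$ the number of degree-$k$ divisors stays bounded. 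If these ``bad'' polynomials carried a positive proportion of the conditional mass, $\E[\tau\mid\tau\ge1]$ would be $O(1)$ and your argument would fail. Showing that they are rare is precisely the second half of the paper's dichotomy: the paper bounds, via \Cref{cor:poisson-tail}, the number of $f$ factorizing as $f_1 f_2$ with $\deg f_1 = k$ and with the combined small-factor count $\ell_1 + \ell_2 < \log_2 k$, and this is where the same $k^{-\delta}(\log k)^{-1/2}$ bound emerges. Without an argument of that kind, your reduction from $\Prob[\tau\ge1]$ to a statement about the \emph{typical} conditional size of $\tau$ is not justified, and I do not see how the ``swapping'' heuristic alone closes it. In effect, completing your approach would force you to reprove the paper's key estimate, at which point the conditional-expectation scaffolding is redundant.

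A smaller point: ``$f$ typically has enough small irreducible factors for this exchange procedure'' is misleading. A uniformly random $f$ has on the order of $\log k$ small factors, giving roughly $k^{\log 2}$ subset-sums spread over $\approx k$ possible degrees, so a typical $f$ has $o(1)$ divisors of degree $k$. The event $\tau \ge 1$ already entails the rare Poisson deviation $\omega \gtrsim \log_2 k$, which is exactly what the paper's direct argument exploits and what must be quantified before the swapping step helps.
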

\begin{proof}
The three arguments are similar. For simplicity we just give the third.
Since the claim is trivial for bounded $k$ we may assume $k \ge 10$, say.
Let $\tilde H_a^\dagger(n, k)$ denote the number of \emph{square-free} $f \in \P_a^\dagger(n)$ having a $\dagger$-symmetric factor of degree $k$.
By \Cref{cor:poisson-tail}(3) and the binomial theorem, the number of square-free $f \in \P_a^\dagger(n)$ having exactly $\ell$ $\dagger$-irreducible factors of degree $\le k$
\[
    \ll \frac{q^{n/2-1/2}}{k} \frac{H_k^\ell}{\ell!} \br{1 + \frac{\ell}{H_k}}.
\]
The sum of this over all $\ell \ge \log_2 k$ is
\[
    \ll \frac{q^{n/2-1/2}}{k} \sum_{\ell \ge \log_2 k} \frac{H_k^{\ell-1}}{(\ell-1)!}.
\]
Let $\ell_0 = \ceil{\log_2 k}$. Then $H_k / \ell_0 < 0.9$ (since $k \ge 10$), so by comparing with a geometric series and using Stirling's approximation we have
\[
    \sum_{\ell \ge \ell_0} \frac{H_k^{\ell-1}}{(\ell-1)!} \ll \frac{H_k^{\ell_0-1}}{(\ell_0-1)!} \asymp \frac{H_k^{\ell_0}}{\ell_0!}.
\]
Applying Stirling's approximation and using $H_k / \ell_0 = \log 2 + O(1/\log k)$, this is
\[
    \asymp (e H_k / \ell_0)^{\ell_0} \ell_0^{-1/2}
    \asymp (e \log 2)^{\log_2 k} (\log k)^{-1/2}  = k^{(1 + \log\log 2) / \log 2} (\log k)^{-1/2}.
\]
Hence the number of square-free $f \in \P_a^\dagger(n)$ having exactly $\ell$ $\dagger$-irreducible factors of degree $\le k$ is $\ll q^{n/2-1/2} k^{-\delta} (\log k)^{-1/2}$.

On the other hand the number of square-free $f \in \P_a^\dagger(n)$ factorizing as $f_1f_2$ where $f_i \in \P_{a_i}^\dagger(n_i)$ has $\ell_i$ $\dagger$-irreducible factors of degree $\le k$ for $i=1,2$, where $(n_1, n_2) = (k, n-k)$, is
\[
    \ll \frac{q^{k/2-1/2}}{k} \frac{H_k^{\ell_1}}{\ell_1!} \br{1 + \frac{\ell_1}{H_k}}
    \frac{q^{(n-k)/2-1/2}}{k} \frac{H_k^{\ell_2}}{\ell_2!} \br{1 + \frac{\ell_2}{H_k}},
\]
and the sum of this over all $\ell_1, \ell_2 \ge 0$ and $a_1, a_2 \in U$ such that $\ell_1+\ell_2=\ell$ and $a_1a_2 = a$ is
\[
    \ll \frac{q^{n/2-1/2}}{k^2} \frac{(2 H_k)^\ell}{\ell!} \br{1 + \frac{\ell}{H_k}}^2,
\]
and the sum of this over all $\ell < \log_2 k$ is $\ll q^{n/2-1/2} k^{-\delta} (\log k)^{-1/2}$,
by a similar sequence of approximations as above.
Thus we get the square-free bound $\tilde H_a^\dagger(n, k) \ll q^{(n-1)/2} k^{-\delta} (\log k)^{-1/2}$.

Now an arbitrary polynomial $f \in \P_a^\dagger(n)$ can be written uniquely $f = f_1f_2^2$ with $f_1$ square-free.
Since $f_1$ is just the product of the $\dagger$-symmetric irreducible factors of odd multiplicity, $f_1$ and $f_2$ are $\dagger$-symmetric.
Also we must have $f_1(0)f_2(0)^2 = a$, so $f_1 \in \P_{a / f_2(0)^2}^\dagger(n - 2 \deg f_2)$.
The number of such $f$ with $\deg f_2 \ge \log k$ is bounded by
\[
    \sum_{d \ge \log k} \sum_{b \in U} \sum_{f_2 \in \P_b^\dagger(d)} q^{(n-2d-1)/2}
    \ll \sum_{d \ge \log k} q^{(n-1)/2 - d/2}
    \ll q^{(n-1)/2 - (\log k)/2}.
\]
Hence assume $\deg f_2 < \log k$.
If $f$ has a $\dagger$-symmetric divisor of degree $k$ then there is some $\dagger$-symmetric $g \mid f_2^2$ such that $f_1$ has a $\dagger$-symmetric divisor of degree $k - \deg g = k - O(\log k)$.
It follows that
\begin{align*}
    H_a^\dagger(n,k)
    &\le \sum_{\substack{d < \log k\\b \in U\\ f_2 \in \P_b^\dagger(d)}} \sum_{\substack{g \mid f_2^2 \\ g^\dagger = g}}
    \tilde H^\dagger_{a / b^2}(n - 2d, k - \deg g) + O(q^{(n-1)/2 - (\log k)/2}) \\
    &\le \sum_{\substack{d < \log k\\b \in U\\ f_2 \in \P_b^\dagger(d)}} \sum_{\substack{g \mid f_2^2 \\ g^\dagger = g}}
    q^{(n-2d-1)/2} (k - \deg g)^{-\delta} (\log (k - \deg g))^{-1/2} + O(q^{(n-1)/2 - (\log k)/2}) \\
    &\ll q^{(n-1)/2} k^{-\delta} (\log k)^{-1/2} \sum_{f_2^\dagger=f_2} \frac{d^\dagger(f_2^2)}{q^{\deg f_2}} + O(q^{(n-1)/2 - (\log k)/2}).
\end{align*}
Here $d^\dagger(f)$ denotes the number of $\dagger$-symmetric divisors of $f$.

To complete the proof it suffices to prove that $\sum_{f^\dagger=f} d^\dagger(f^2) / q^{\deg f} \ll 1$, and this follows from an easy Euler product argument:
\begin{align*}
    \sum_{f^\dagger = f} \frac{d^\dagger(f^2)}{q^{\deg f}}
    &= \prod_{g~\dagger\text{-irreducible}} \br{1 + \frac{3}{q^{\deg g}} + \frac{5}{q^{2\deg g}} + \cdots} \\
    &\le \exp \sum_{g~\dagger\text{-irreducible}} \br{\frac{3}{q^{\deg g}} + \frac{5}{q^{2\deg g}} + \cdots} \\
    &\le \exp \sum_{d \ge 1} \sum_{m \ge 1} \frac{2m+1}{q^{md - d/2}} \ll 1.\qedhere
\end{align*}
\end{proof}

\begin{remark}
    \label{rem:mult-table-star-symmetric-case}
    By analogy with the corresponding results for integers, permutations, and ordinary polynomials, we expect that each instance of $O(k^{-\delta} (1 + \log k)^{-1/2})$ in \Cref{prop:mult-problem} can be improved to $\Theta(k^{-\delta} (1 + \log k)^{-3/2})$.
    This is certainly true for part (2), the $*$-symmetric case.
    By \eqref{eq:star-reductions} it suffices to consider the case of $H^*_1(2n)$.
    Every polynomial $f \in \P_1^*(2n)$ can be written uniquely in the form $f(X) = X^n g(X + 1/X)$ where $g \in \P(n)$.
    If $g$ has a factor of degree $k$ then $f$ has a $*$-symmetric factor of degree $2k$, so
    \[
        H_1^*(2n, 2k) \ge H(n, k) \asymp \frac{q^n}{k^\delta (1 + \log k)^{3/2}}
    \]
    by \eqref{eq:Hnk}.
    On the other hand if $f$ is moreover square-free then $f$ cannot have any factors of $X\pm 1$,
    and if $f$ has a $*$-symmetric factor of (necessarily even) degree $2k$ then $g$ has a factor of degree $k$.
    Thus the number of square-free $f \in \P_1^*(2n)$ having a $*$-symmetric factor of degree $2k$ is
    \[
        \tilde H_1^*(2n, 2k) \le H(n, k) \asymp \frac{q^n}{k^\delta (1 + \log k)^{3/2}}
    \]
    by \eqref{eq:Hnk} again.
    The general case reduces to the square-free case as in the proof of \Cref{prop:mult-problem}.
\end{remark}

We will also need the following proposition about polynomials nearly factorizing as $gg^*$ or $gg^\dagger$.

\begin{proposition}
\label{prop:ff*}
\leavevmode
\begin{enumerate}
\item (This item intentionally left blank in order to keep the numbering consistent with the other results in this section.)
\item The number of polynomials $f \in \P_a^*(n)$ ($a=\pm 1$) which factor as $gg^*h$ with $\deg h \le m \le n^{1/2}$ is $\ll |\ca P^*_a(n)|(m+1) / n^{1/2}$.
\item The number of polynomials $f \in \P_a^\dagger(n)$ ($a\in U$) which factor as $gg^\dagger h$ with $\deg h \le m \le n^{1/2}$ is $\ll q^{(n-1)/2} (m+1) / n^{1/2}$.
\end{enumerate}
\end{proposition}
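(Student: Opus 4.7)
My plan is based on a canonical decomposition of palindromic polynomials. Given $f \in \P_a^*(n)$, I write $f = uw$ uniquely, where $u$ is the squarefree product of those $*$-symmetric irreducible divisors of $f$ of odd multiplicity, and $w = f/u$. A direct check on multiplicities shows that every $*$-symmetric irreducible divides $w$ to even order, so $w = gg^*$ for some $g \in \P$ with $g(0) \neq 0$; the analogous decomposition works in the $\dagger$-case with $w = gg^\dagger$. The key observation is that $f$ admits a factorization $f = gg^*h$ with $\deg h \le m$ if and only if $\deg u \le m$: any such $h$ is forced to be $*$-symmetric (since $f$ and $gg^*$ are), and must contain every $*$-symmetric irreducible divisor of $f$ of odd multiplicity because $gg^*$ has every $*$-symmetric irreducible to even multiplicity.

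Let $\alpha_a(k)$ denote the number of squarefree products of $*$-symmetric irreducibles of degree $k$ with constant term $a$, and $\beta(j)$ the number of polynomials of the form $gg^*$ of degree $j$ (such $w$ automatically satisfies $w(0) = 1$). Then
\[
    \bigl|\{f \in \P_a^*(n) : f = gg^*h,\ \deg h \le m\}\bigr| = \sum_{k=0}^m \alpha_a(k)\,\beta(n-k),
\]
and $\dagger$-analogously $\sum_{k=0}^m \sum_{c \in U} \alpha_c^\dagger(k)\,\beta_{a/c}^\dagger(n-k)$ in part~(3), where $\alpha_c^\dagger(k)$ and $\beta_c^\dagger(j)$ are defined in the obvious parallel way (noting that a polynomial of the form $gg^\dagger$ has constant term $g(0)/\bar g(0) \in U$, which can be any element of $U$).

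The main quantitative input needed is
\[
    \beta(j) \ll q^{j/2}/\sqrt{j+1}, \qquad \beta_c^\dagger(j) \ll q^{(j-1)/2}/\sqrt{j+1}\ \text{uniformly for}\ c \in U.
\]
For the $*$-bound, using \eqref{eq:star-symmetric-reduction} a polynomial of the form $gg^*$ of degree $2N$ corresponds to some $G \in \P(N)$ in which every ``Type~1'' irreducible factor of degree $\ge 2$ appears with even multiplicity (here Type~1 refers to those irreducibles in $\F_q[Y]$ whose preimage under $Y = X + 1/X$ yields a $*$-symmetric factor in $\F_q[X]$). Using \Cref{prop:PNT}(2) to count Type~1 irreducibles and a standard Euler-product / singularity analysis, the associated generating function behaves like $C(1 - qX^2)^{-1/2}$ near $X = q^{-1/2}$, yielding $\beta(j) \asymp q^{j/2}/\sqrt{j}$ for even $j$. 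The $\dagger$-case proceeds in parallel via the factorization $F^\dagger = H^\dagger \cdot S^\dagger$ of the generating functions for $\dagger$-symmetric polynomials, squarefree products of $\dagger$-symmetric irreducibles, and polynomials of the form $gg^\dagger$, fed by the prime counts of \Cref{prop:PNT}(3). The uniformity in $c \in U$ then follows from a character-sum twist on the same Euler product.

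Combining these with the trivial bound $\alpha_a(k) \le |\P_a^*(k)| \ll q^{k/2}$ (and the $\dagger$-analogue) and using the hypothesis $m \le n^{1/2}$ to replace $\sqrt{n-k}$ by $\sqrt{n}$ up to constants,
\[
    \sum_{k=0}^m \alpha_a(k)\,\beta(n-k) \ll \sum_{k=0}^m q^{k/2}\cdot\frac{q^{(n-k)/2}}{\sqrt{n}} \ll (m+1)\frac{q^{n/2}}{\sqrt{n}} \asymp (m+1)\frac{|\P_a^*(n)|}{\sqrt{n}},
\]
which is part~(2); an identical calculation using the $\dagger$-equidistribution yields $(m+1)\,q^{(n-1)/2}/\sqrt{n}$ for part~(3). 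The hard part will be producing the $(1 - qX^2)^{-1/2}$ singularity in both generating functions; once that is in hand, the equidistribution in $c$ for part~(3) is a standard character-sum computation, and the remaining assembly is routine bookkeeping.
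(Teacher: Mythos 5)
Your decomposition $f = uw$, where $u$ is the squarefree product of the $*$-symmetric irreducible divisors of odd multiplicity and $w = f/u$ is forced to be of the form $gg^*$, is correct and is genuinely cleaner than the paper's bookkeeping. In particular your equivalence ``$f = gg^*h$ with $\deg h \le m$ iff $\deg u \le m$'' is right (once one notes $h$ is automatically $*$-symmetric and $u \mid h$), and it turns the count into the exact convolution $\sum_{k \le m}\alpha_a(k)\beta(n-k)$ rather than the paper's slight over-count obtained by summing over all $*$-symmetric $h$ of degree $\le m$. The final assembly (using $\alpha_a(k) \le |\P^*_a(k)|$ and the observation that $|\P^*_a(k)|\,q^{(n-k)/2} = |\P^*_a(n)|$ whenever $n-k$ is even so that $\beta(n-k)\ne 0$; your ``$\ll q^{k/2}$'' form is slightly lossy when $a=-1$ and should be kept as $|\P^*_a(k)|$) is routine once the $\beta$-bounds are in hand.

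The substantive divergence from the paper, and the place where your plan is incomplete, is the proof of $\beta(j)\ll q^{j/2}/\sqrt{j+1}$ and its $\dagger$-analogue uniformly in $c\in U$. You propose singularity analysis of the Euler product, aiming for a $(1-qz^2)^{-1/2}$ singularity. That singularity type is indeed correct, and the coefficient asymptotics would give the desired bound. But to make this a proof one needs (a) uniformity in $q$ of the constant multiplying the singular factor, which requires controlling the analytic part of the Euler product at $z=q^{-1/2}$; (b) either regularity away from the dominant singularity or a transfer lemma that avoids it; and (c) in the $\dagger$-case, genuine cancellation in the twisted Euler products $\sum_w \chi(w(0))z^{\deg w}$ for nontrivial $\chi\in\widehat U$ to get equidistribution of $w(0)$ over $U$. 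You flag (a)–(c) as ``the hard part'' and ``a standard character-sum computation'' but do not carry them out. The paper avoids all of this by deducing $\beta(j)\ll q^{j/2}/\sqrt{j}$ in one line from the already-proven Poisson-type tail bound \Cref{cor:poisson-tail}(2) (resp.\ (3)): set $\ell_1=0$, sum over $\ell_2$, then lift from squarefree to general $f$ via $f=f_1f_2^2$. That corollary is stated with constants uniform in $q$ and already handles the constant coefficient $a\in U$ in the $\dagger$-case, so the equidistribution you need comes for free. In short: your reduction is an elegant and valid alternative, but the load-bearing estimate is unproven as written, whereas the paper's route reuses machinery it has already built and requires no further analytic input.
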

\begin{proof}
\begin{enumerate}[wide,start=2]
\item
By \eqref{eq:star-reductions}, it is enough to prove the statement in the case of $\P_1^*(2n)$. First consider the $h=1$ case.
By \Cref{cor:poisson-tail}(2) with $k_1 = k_2 = 2n$, $\ell_1 = 0$, and summing over $\ell_2$, the number of square-free polynomials $f \in \P_1^*(2n)$ with no $*$-symmetric irreducible factors is $\ll q^n / n^{1/2}$.
Now an arbitrary $f \in \P_1^*(2n)$ can be written uniquely $f = f_1 f_2^2$ where $f_1$ is square-free, and if $f$ is $*$-symmetric then so are $f_1$ and $f_2$.
Moreover, $f$ can be written as $gg^*$ with if and only if $f_1$ has no $*$-symmetric irreducible factors.
Hence, by considering all possibilities for $f_2$, the number of such $f$ is
\[
    \ll \sum_{d < n} q^{d/2} q^{n-d} / (n-d)^{1/2} + q^{n/2} \ll q^n / n^{1/2}.
\]
Now by considering all possibilities for $h \in \P_1^*(2d)$ ($0 \le 2d \le m$) it follows that the number of $f \in \P_1^*(2n)$ factoring as $gg^*h$ is
\[
    \ll \sum_{d=0}^{m} q^{d} q^{n-d} / (n-2d)^{1/2} \asymp (m+1) q^n / n^{1/2}.
\]
\item Similar.\qedhere
\end{enumerate}
\end{proof}

\subsection{Some auxiliary results}

The following two propositions are of somewhat specialist interest (but will be crucial in the proof of \Cref{t_main}).
Given a polynomial $f$, let us say $f$ has property $P_r$ if every irreducible factor of $f$ has either degree or multiplicity divisible by $r$.
Thus for example $f$ has $P_2$ if and only if every odd-degree irreducible factor of $f$ has even multiplicity.

A version of part (1) of the following result appears in \cite{gorodetsky}*{Section~2.2}.

\begin{proposition}
\label{prop:p_r}
Let $r \ge 2$.
\begin{enumerate}
    \item The number of polynomials in $\P_a(n)$ ($a \in \F_q^\times$) with property $P_r$ is $\ll q^{n-1} n^{-1+1/r}$.
    \item The number of polynomials in $\P_a^*(n)$ ($a=\pm 1$) with property $P_r$ is
    \[
    \ll |\ca P^*_a(n)| \begin{cases}
        n^{-1 + 1/r} &: r~\text{odd}, \\
        n^{-1 + 3/(2r)} &: r~\text{even}.
    \end{cases}
    \]
    \item The number of polynomials in $\P_a^\dagger(n)$ ($a \in U$) with property $P_r$ is
    \[
    \ll q^{(n-1)/2} \begin{cases}
        n^{-1 + 1/r} &: r~\text{odd}, \\
        n^{-1 + 1/(2r)} &: r~\text{even}.
    \end{cases}
    \]
\end{enumerate}
\end{proposition}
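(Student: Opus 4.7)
The three parts follow a common template. For fixed $r \ge 2$, call a monic irreducible polynomial \emph{Type A} if its degree is divisible by $r$ and \emph{Type B} otherwise; extend this terminology to $*$- and $\dagger$-irreducibles by declaring a conjugate pair $gg^*$ (resp.\ $gg^\dagger$) to be Type A iff $r \mid \deg g$. Any polynomial $f$ has property $P_r$ if and only if, in the appropriate factorization, $f = gh^r$ where $g$ contains only Type A factors (with arbitrary multiplicities) and $h$ contains only Type B factors (also with arbitrary multiplicities); here $\gcd(g,h) = 1$ is automatic since the two types use disjoint irreducibles. Letting $A_j(b)$ and $K_k(c)$ count, respectively, Type A and Type B polynomials of the indicated degree and constant term, for part (1),
\[
    |\P_a(n) \cap P_r| \le \sum_{j + rk = n} \sum_{c \in \F_q^\times} A_j(a/c^r) \, K_k(c),
\]
and analogous convolutions hold in parts (2) and (3).

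The key estimate is $A_j(b) \ll q^{j-1} j^{-\alpha}$, where $\alpha$ is the exponent claimed in each part ($\alpha = 1-1/r$ for part (1); $\alpha \in \{1-1/r, 1-3/(2r)\}$ for part (2) depending on the parity of $r$; $\alpha \in \{1-1/r, 1-1/(2r)\}$ for part (3)). For the square-free count $A_j^{\mathrm{sqf}}(b)$, apply \Cref{prop:poisson-tail} with partition $I_A \sqcup I_B$ of the relevant set of (ordinary, $*$-, or $\dagger$-)irreducibles, take $m_B = 0$, and sum over $m_A$: the Poisson mass on $I_A$ integrates to a constant, so the bound is $\ll q^{j-1} e^{-H(I_B)}$. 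A direct calculation using \Cref{prop:PNT} gives $H(I_B) = \alpha \log j + O(1)$ in each of the three cases. The asymmetry between $r$ odd and $r$ even in parts (2) and (3) arises because type-(i) $*$-irreducibles (i.e., irreducible $*$-symmetric polynomials other than $X \pm 1$) have \emph{even} degree, whereas type-(i) $\dagger$-irreducibles have \emph{odd} degree; consequently their contribution to $H^*(I_A)$ is nonzero for $r$ even, whereas their contribution to $H^\dagger(I_A)$ vanishes for $r$ even. A short case analysis produces the listed exponents.

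To extend from the square-free bound to the general bound, use the canonical decomposition $f = f_1 f_2^2$ with $f_1$ square-free: $f$ is of Type A iff both $f_1$ and $f_2$ are, hence
\[
    A_j(b) = \sum_{k \ge 0} \sum_{c \in \F_q^\times} A_{j-2k}^{\mathrm{sqf}}(b/c^2) \, A_k(c),
\]
and a short induction on $j$ propagates the bound, since $\sum_k q^{-k} k^{-\alpha}(j-2k)^{-\alpha} \ll j^{-\alpha}$. Plugging into the main convolution and using the trivial bound $K_k(c) \le |\P_c(k)|$ (equal to $q^{k-1}$ in part (1) and the analogous sizes from \eqref{eq:P_a-size} in parts (2) and (3)), the geometric decay factor $q^{-(r-1)k}$ forces the $k = 0$ term to dominate, and the claimed bounds follow. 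The main technical hurdle is precisely this transition from square-free to general, because \Cref{prop:poisson-tail} handles only square-free polynomials; fortunately the square-factor contribution gives a convergent geometric series in $q^{-k}$, so this step, while requiring some care in the bookkeeping, causes no essential difficulty.
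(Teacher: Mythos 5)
Your proof is correct in outline and takes a genuinely different route from the paper. The paper works with the single decomposition $f = f_1 f_2^2 f_3^r$ (by multiplicity: $f_1 f_2^2$ is the $r$-free part, $f_1$ square-free), observes that $P_r$ is equivalent to $f_1 f_2^2$ having only Type~A factors, bounds the number of $f_1$ tightly via \Cref{prop:poisson-tail}, and bounds $f_2$, $f_3$ trivially; a single geometric sum over $(\deg f_2, \deg f_3)$ then finishes. You instead first split by degree type, $f = gh^r$ with $g$ the Type~A part and $h^r$ the Type~B part, which requires a separate estimate for the Type~A count $A_j(b)$; to get that you perform a second decomposition $g = f_1 f_2^2$ and apply \Cref{prop:poisson-tail}. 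Both routes ultimately rest on the same two ingredients (the Poisson tail bound for the square-free part plus geometric decay in $q$ from the square part), but the paper's bookkeeping is flatter since it collapses your two decompositions into one. One small caution on your induction step: as written, $A_j(b) \le \sum_k \sum_c A^{\mathrm{sqf}}_{j-2k}(b/c^2) A_k(c)$ with the inductive hypothesis $A_k(c) \le C q^{k-1}k^{-\alpha}$ forces $C \ge C_0(1 + O(C/q))$, which does not close for small $q$ if $C_0$ (the absolute constant from \Cref{prop:poisson-tail}) is not small; the clean fix is to use the trivial bound $A_k(c) \le q^{k-1}$ on the right-hand side instead of inducting, which gives $A_j(b) \ll q^{j-1}j^{-\alpha}\bigl(1 + \sum_{k\ge 1} q^{-k}(j-2k)^{-\alpha}/j^{-\alpha}\bigr) \ll q^{j-1}j^{-\alpha}$ with an absolute constant. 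With that adjustment the argument is complete, and your computation of the exponents $\alpha$ via $H(I_B)$ in each of the three settings agrees with the paper's.
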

\begin{proof}
\begin{enumerate}[wide]
    \item
    First, an application of \Cref{prop:poisson-tail} shows that the number of square-free $f \in \P_a(n)$ having no factors in the set $I$ of irreducible polynomials with degree not divisible by $r$ is $\ll q^{n-1} e^{-H(I)}$.
    Evidently (using \Cref{prop:PNT}(1)) $H(I) = (1-1/r) \log n + O(1)$, so the bound just quoted is $\ll q^{n-1} n^{-1+1/r}$.
    Now an arbitrary $f$ can be written uniquely as $f = f_1f_2^2f_3^r$, where $f_1 f_2^2$ is $r$-free (not divisible by any nontrivial $r$-th power) and $f_1$ is square-free,
    and clearly $f$ has property $P_r$ if and only if $f_1f_2^2$ has no factors in $I$. Thus, considering all possibilities for $f_2$ and $f_3$, which say have degrees $d$ and $e$, the number of such $f$ is bounded by
    \[
        \sum_{\substack{d, e\ge 0 \\ 2d + re < n}} q^d q^e q^{(n-2d-re) - 1} (n - 2d - re)^{-1+1/r} + \sum_{\substack{d, e \ge 0 \\ 2d + re = n}} q^d q^e
        \ll q^{n-1} n^{-1+1/r}.
    \]
    \item We consider the case $\ca P^*_1(2n)$; the general case is similar, using \eqref{eq:star-reductions}. Let $I \subset \I_{\le 2n}^*$ be the set of $*$-irreducible polynomials which are either $*$-symmetric irreducible of degree not divisible by $r$ or of the form $gg^*$ with $\deg g$ not divisible by $r$.
    Then
    \[
        H^*(I) = \sum_{r \nmid 2d\le 2n} \frac{\pi_q^*(2d)}{q^d} + \sum_{r \nmid d \le n} \frac{\pi_q^\nstar(d)/2}{q^d} + (1+\eta)q^{-1/2}.
    \]
    If $r$ is odd this is $(1-1/r) \log n + O(1)$;
    otherwise it is $(1 - 3/(2r)) \log n + O(1)$.
    The rest of the proof is as above.
    \item Similar.\qedhere
\end{enumerate}
\end{proof}

\begin{proposition}
\label{prop:technical_4Z}
    The number of polynomials in $\P_a^*(n)$ ($a=\pm 1$) with an even number of $*$-symmetric irreducible factors of degree $k$ for each $k \in [1, n/2] \cap 4 \Zint$ is
    \[
        \ll |\ca P^*_a(n)| n^{-1/4} \log n.
    \]
\end{proposition}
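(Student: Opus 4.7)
The plan is to apply the Poisson-tail bound of Proposition 2.2(2) to squarefree polynomials and then sum over the ``squareful'' part. Heuristically, for a random $*$-symmetric $f \in \P^*_1(2n)$ the number of $*$-symmetric irreducible factors of each degree $k = 4j \le n/2$ should be approximately independent Poisson with parameter $\lambda_k := H^*(I_k) \approx 1/k$, where $I_k$ is the set of $*$-symmetric irreducibles of degree $k$. The probability that Poisson$(\lambda)$ is even is $(1+e^{-2\lambda})/2$, so writing $S := [1, n/2] \cap 4\Zint$, the heuristic probability of the desired event is
\[
\prod_{k \in S} \tfrac{1 + e^{-2\lambda_k}}{2} \asymp \exp\Bigl(-\tfrac{1}{4}\log n + O(1)\Bigr) \asymp n^{-1/4},
\]
using $\sum_{k \in S} \lambda_k = \tfrac{1}{4}\log n + O(1)$ from Proposition 2.1(2). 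The $\log n$ in the claim will arise from the marked-factor weighting baked into Proposition 2.2.

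First I reduce to $\P^*_1(2n)$ using the identities \eqref{eq:star-reductions}, since multiplying by $X \pm 1$ does not affect the number of $*$-symmetric irreducible factors of degree $\ge 2$. Next I write each $f \in \P^*_1(2n)$ uniquely as $f = f_1 f_2^2$ with $f_1$ squarefree; then $f_1$ and $f_2$ are both $*$-symmetric, and since $f_2^2$ contributes to every prime with even multiplicity, the parity of the number of degree-$k$ $*$-symmetric irreducible factors of $f$ equals that of $f_1$. So it suffices, for each $d = \deg f_2$, to bound the number of squarefree $f_1 \in \P^*_1(2m)$, $m = n - d$, satisfying the even-parity condition for all $k \in S$ with $k \le 2m$, and then multiply by $|\P^*_{\pm 1}(d)| \ll q^{d/2}$ and sum over $d$.

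For the squarefree count I apply Proposition 2.2(2) with the partition of $\I^*_{\le 2m}$ into $I_0$ (everything else) and $I_k$ for $k \in S$ with $k \le 2m$. Summing over $m_0 \ge 0$ and $m_k$ even, and using $\sum_{m \text{ even}} \lambda^m/m! = \cosh \lambda$ and $\sum_{m \text{ even},\, m \ge 2}(m/\lambda)\lambda^m/m! = \sinh \lambda$, the relevant generating sum telescopes to
\[
e^{\lambda_0} \prod_k \cosh(\lambda_k)\Bigl(1 + \sum_k \tanh(\lambda_k)\Bigr).
\]
After multiplying by $e^{-\lambda_0 - \sum_k \lambda_k}$ the squarefree bound becomes $\ll q^m \prod_k \tfrac{1 + e^{-2\lambda_k}}{2}(1 + \sum_k \tanh \lambda_k)$. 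Invoking Proposition 2.1(2) to write $\lambda_{4j} = 1/(4j) + O(q^{-4j/3})$, so that $\sum_{k \in S} \lambda_k = \tfrac{1}{4}\log n + O(1)$ and $\sum_k \lambda_k^2 = O(1)$, and expanding $\log((1 + e^{-2\lambda})/2) = -\lambda + O(\lambda^2)$, I obtain the product $\asymp m^{-1/4}$ and the tanh sum $\ll \log n$, so the squarefree count is $\ll q^m m^{-1/4} \log n$.

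Finally I sum over $d$: the total is $\ll q^n \log n \sum_{d \ge 0} q^{-d/2}(n - d)^{-1/4}$, which is dominated by $d = O(1)$ and yields $\ll q^n n^{-1/4} \log n \asymp |\P^*_a(n)| n^{-1/4} \log n$. Small-$m$ boundary cases and large $d$ contribute negligibly thanks to the geometric decay $q^{-d/2}$. The main technical obstacle is the generating function identity in the third paragraph which isolates $\prod (1+e^{-2\lambda_k})/2$ as the source of the $n^{-1/4}$ factor; the extra $\tanh$-sum, responsible for the $\log n$ loss, is an artefact of the marked-factor weighting in Proposition 2.2 and would likely disappear under a sharper saddle-point analysis.
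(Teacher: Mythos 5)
Your proposal is correct and follows essentially the same route as the paper: reduce to $\P_1^*(2n)$ via \eqref{eq:star-reductions}, apply \Cref{prop:poisson-tail}(2) to the partition of $\I^*_{\le 2n}$ by $*$-symmetric irreducibles of degree in $4\Zint$, sum over even $m_i$ using the $\cosh$/$\sinh$ identities to isolate $\prod(1+e^{-2\lambda_k})/2 \asymp n^{-1/4}$ together with the $\tanh$-sum giving the $\log n$ loss, evaluate $\lambda_{4j}$ via \Cref{prop:PNT}(2), and pass from the square-free case to the general case via the $f = f_1 f_2^2$ decomposition. The only difference from the paper is cosmetic ordering (you set up the $f_1 f_2^2$ reduction before rather than after the Poisson computation).
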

\begin{proof}
By \eqref{eq:star-reductions}, it is enough to prove the statement in the case of $\P_1^*(2n)$. Let $I_i \subset \I_{\le 2n}^*$ be the set of $*$-symmetric irreducible polynomials of degree $4i$ for $1 \le i \le n/2$
    and let $I_0 = \I_{\le n}^* \sm \bigcup_{1 \le i \le n/2} I_i$.
    Applying \Cref{prop:poisson-tail}(2) to this partition, we find that the number of square-free polynomials $f \in \P_1^*(2n)$ having exactly $m_i$ factors in $I_i$ for $0 \le i \le r = \floor{n/2}$ is
    \[
        \ll q^n \prod_{i=0}^r\br{e^{-H^*(I_i)} \frac{H^*(I_i)^{m_i}}{m_i!}} \br{\frac{m_0}{H^*(I_0)} + \frac{m_1}{H^*(I_1)} + \cdots + \frac{m_r}{H^*(I_r)}}.
    \]
    Summing over $m_0 \ge 0$ gives
    \[
        \ll q^n \prod_{i=1}^r \br{e^{-H^*(I_i)} \frac{H^*(I_i)^{m_i}}{m_i!}} \br{1 + \frac{m_1}{H^*(I_1)} + \cdots + \frac{m_r}{H^*(I_r)}}.
    \]
    Observe that
    \begin{align*}
        &\sum_{m~\text{even}} e^{-\lambda} \frac{\lambda^m}{m!} \phantom{\frac{m}{\lambda}} = e^{-\lambda} \cosh(\lambda),\\
        &\sum_{m~\text{even}} e^{-\lambda} \frac{\lambda^m}{m!} \frac{m}{\lambda} = e^{-\lambda} \sinh(\lambda).
    \end{align*}
    Hence the sum over all even $m_1, \dots, m_r$ is
    \[
        \ll q^n \prod_{i=1}^r f\br{H^*(I_i)} \br{1 + \sum_{i=1}^r g(H^*(I_i))}.
    \]
    where
    \begin{align*}
        f(\lambda) &= e^{-\lambda} \cosh(\lambda) = 1 - \lambda + O(\lambda^2),\\
        g(\lambda) &= \tanh(\lambda) = \lambda + O(\lambda^2).
    \end{align*}
    Now, by \Cref{prop:PNT}(2),
    \[
        H^*(I_i) = \frac{\pi_q^*(4i)}{q^{2i}} = \frac{1}{4i} \br{1 - O(q^{-4i/3})},
    \]
    so we get the claimed bound $\ll q^n n^{-1/4} \log n$, in the square-free case.

    As usual, to deduce the arbitrary case we write an arbitrary polynomial $f$ as $f = f_1f_2^2$ with $f_1$ square-free, and we note that $f$ has an even number of irreducible factors of degree $k$ if and only if $f_1$ does. Hence the number of $f \in \P_1^*(2n)$ with an even number of $*$-symmetric irreducible factors of degree $k$ for each $k \in [1,n] \cap 4\Zint$ is
    \[
        \ll \sum_{\substack{\deg f_2 < n \\ f_2^* = f_2}} q^{n - \deg f_2} (n-2\deg f_2)^{-1/4} \log (n - 2 \deg f_2) + q^{n/4} \ll q^n n^{-1/4} \log n.\qedhere
    \]
\end{proof}

\subsection{Parity of the number of irreducible factors}

For $f \in \F_q[X]$, let  $\Omega(f)$ be the number of irreducible factors counting multiplicity and let $\lambda(f) = (-1)^{\Omega(f)}$.
Define $\mu(f) = \lambda(f)$ when $f$ is square-free and $\mu(f) = 0$ otherwise.
These are the analogues of the Liouville and M\"obius functions for integers.

In the case of unrestricted polynomials, it is straightforward to establish the generating function identities
\begin{align*}
    \sum_{f \in \P} z^{\deg f} &= (1-qz)^{-1}, \\
    \sum_{f \in \P} \mu(f) z^{\deg f} &= 1 - qz, \\
    \sum_{f \in \P} \lambda(f) z^{\deg f} &= (1 - qz) / (1 - qz^2).
\end{align*}
By taking the coefficient of $z^n$ we get the identities $\sum_{f \in \P(n)} \mu(f) = 0$ for $n > 1$ and $\sum_{f \in \P(n)} \lambda(f) = (-1)^n q^{\ceil{n/2}}$.
We need a variant for $*$-symmetric polynomials.

\begin{proposition}
\label{prop:parity_factors}
    Let $\calQ(n) \subset \P_1^*(n)$ be the set of $*$-symmetric polynomials $f$ such that $f(1), f(-1) \ne 0$. Then
    \[
        \sum_{f \in \calQ(n)} \lambda(f) = \begin{cases}
            1 &: n = 0, \\
            -1 &: n = 2, \\
            0 &: n \ne 0, 2.
        \end{cases}
    \]
    Hence
    \[
        \sum_{f \in \P^*(n)} \lambda(f) = \begin{cases}
            1 &: n = 0,\\
            -1 &: n = 1 ~\text{and}~ 2 \mid q,\\
            0 &: n > 1 ~\text{and}~ 2 \mid q,\\
            2 (-1)^n &: n \ge 1 ~\text{and}~ 2 \nmid q.
        \end{cases}
    \]
\end{proposition}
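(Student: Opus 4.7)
The plan is to compute the generating function $A(z) = \sum_{n \ge 0} z^n \sum_{f \in \calQ(n)} \lambda(f)$ and show that $A(z) = 1 - z^2$; the first identity then follows by reading off coefficients. Every $f \in \calQ$ factors uniquely into $*$-irreducibles, and since $f(\pm 1) \ne 0$ none of these factors are $X \pm 1$. The relevant $*$-irreducibles split into Type~I (the $*$-symmetric irreducibles of even degree $\ge 2$, for which $\lambda(p) = -1$) and Type~II (the products $hh^*$ with $h$ irreducible of degree $\ge 1$, $h(0) \ne 0$, and $h \ne h^*$, for which $\lambda(hh^*) = +1$). Hence
\[
    A(z) = \prod_I \frac{1}{1 + z^{\deg p}} \cdot \prod_{II} \frac{1}{1 - z^{\deg p}}.
\]

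Next set $\eta = \ind{q~\textup{odd}}$ and use \eqref{eq:P_a-size}, which gives $\sum_{f \in \P_1^*} z^{\deg f} = (1 + z)/(1 - qz^2)$. By unique $*$-irreducible factorization, and noting that $X - 1$ is the only $*$-irreducible with constant coefficient $-1$ (so it must occur with even multiplicity in $\P_1^*$ when $q$ is odd), the Euler product for $\P_1^*$ contains a factor $1/(1 - z^2)$ from $(X-1)^{2a}$ and $1/(1 - z)$ from $(X+1)^b$ in odd characteristic, or just $1/(1 - z)$ from $(X-1)^c$ in even characteristic; the remaining factors coincide with $\prod_I\prod_{II}(1 - z^{\deg p})^{-1}$. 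Solving for the remainder yields
\[
    \prod_I \prod_{II} \frac{1}{1 - z^{\deg p}} = \frac{(1 - z^2)^{1 + \eta}}{1 - qz^2},
\]
and then $A(z) = \prod_I \prod_{II}(1 - z^{\deg p})^{-1} \cdot \prod_I (1 - z^{\deg p})/(1 + z^{\deg p})$ reduces the problem to computing the Type~I correction factor.

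Taking logarithms and using $\log(1 + w) - \log(1 - w) = 2 \sum_{k~\textup{odd}} w^k/k$, the coefficient of $z^{2n'}$ in $\log \prod_I (1 + z^{\deg p})/(1 - z^{\deg p})$ equals $\frac{1}{n'} \sum_{d \mid n',~n'/d~\textup{odd}} F^*(d)$, where $\deg p = 2d$ and $F^*(d) = 2d\, \pi_q^*(2d)$ is, as in the proof of \Cref{prop:PNT}(2), the number of $x \in \bar{\F}_q^\times$ of degree $2d$ with $x^{q^d} = x^{-1}$. That proof also shows that the cyclic group $\{x \in \bar{\F}_q^\times : x^{q^{n'}} = x^{-1}\}$ has order $q^{n'} + 1$ and decomposes as $\{\pm 1\}$ (a set of size $1 + \eta$) together with the disjoint union, over $d \mid n'$ with $n'/d$ odd, of the degree-$2d$ solutions. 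Hence $\sum_{d \mid n',~n'/d~\textup{odd}} F^*(d) = q^{n'} - \eta$, so $\log \prod_I (1 + z^{\deg p})/(1 - z^{\deg p}) = -\log(1 - qz^2) + \eta \log(1 - z^2)$, i.e.\ $\prod_I (1 + z^{\deg p})/(1 - z^{\deg p}) = (1 - z^2)^\eta/(1 - qz^2)$. Multiplying the two expressions, the $(1 - qz^2)$ factors cancel and $(1 - z^2)^{1+\eta}/(1 - z^2)^\eta = 1 - z^2$, uniformly in $q$.

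For the second identity, every $f \in \P^*$ factors uniquely as $(X-1)^c(X+1)^b f_0$ in odd characteristic, or $(X-1)^c f_0$ in even characteristic, with $f_0 \in \calQ$ and $c, b \ge 0$ arbitrary; in odd characteristic, the parity of $c$ toggles between $\P_1^*$ and $\P_{-1}^*$, so $c$ ranges over all nonnegative integers in $\P^*$. Since $\lambda(f) = (-1)^{c + b} \lambda(f_0)$, summing against $z^{\deg f}$ gives $\sum_{f \in \P^*} \lambda(f) z^{\deg f} = A(z)/(1 + z)^{1 + \eta}$, which equals $(1 - z)/(1 + z) = 1 + 2 \sum_{n \ge 1}(-1)^n z^n$ for $q$ odd and $1 - z$ for $q$ even; these match the claimed values. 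The main (and only) subtle step is the third paragraph, specifically matching the decomposition of $\{x : x^{q^{n'}} = x^{-1}\}$ correctly in both characteristics so that the exponent $\eta$ appears in the right place.
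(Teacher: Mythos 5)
Your proof is correct, and it takes a genuinely different route from the paper's. Both arguments start from the same Euler product $A(z)=\prod_I(1+z^{\deg p})^{-1}\prod_{II}(1-z^{\deg p})^{-1}$ over the $*$-irreducibles $\ne X\pm1$, but the paper then substitutes explicit M\"obius-inverted formulas for both $\pi_q^*(2n)$ and $\pi_q^{\nstar}(n)/2$ into $\log A(z)$ and verifies a cancellation of M\"obius sums. You instead split off the Type-I correction factor $\prod_I(1-z^{\deg p})/(1+z^{\deg p})$, compute the remaining full Euler product $\prod_{I,II}(1-z^{\deg p})^{-1}$ elementarily from the closed form $\sum_{f\in\P_1^*}z^{\deg f}=(1+z)/(1-qz^2)$ together with the cyclotomic bookkeeping of $X\mp1$, and evaluate the correction factor using only the \emph{pre-M\"obius} identity $\sum_{d\mid n',\,n'/d\text{ odd}}F^*(d)=q^{n'}-\eta$. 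This sidesteps both the formula for $\pi_q^{\nstar}$ and the M\"obius cancellation entirely, and is arguably the cleaner argument the paper's footnote ("Is there a proof which is a little less humpty-dumpty?") was asking for. The one place where care was needed — matching the $\eta$-dependence in $\{x:x^{q^{n'}}=x^{-1}\}$ against the $\eta$-dependence coming from the $(X\mp1)$ factors — you handled correctly, and the $(1-qz^2)$ factors cancel to give $A(z)=1-z^2$ uniformly in $q$. The deduction of the second display via $\sum_{f\in\P^*}\lambda(f)z^{\deg f}=A(z)/(1+z)^{1+\eta}$ is also correct and matches the paper's.
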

\begin{proof}
Let $\calQ = \bigcup_{n \ge 0} \calQ(n)$ and let $F(z) = \sum_{f \in \calQ} \lambda(f) z^{\deg f}$.
Then we have an Euler product expression
\[
    F(z) = \prod_{\substack{g~*\text{-irred.} \\ g \ne X \pm 1}} (1 - \lambda(g) z^{\deg g})^{-1}
    = \prod_{n \ge 1} (1 + z^{2n})^{-\pi_q^*(2n)} (1 - z^{2n})^{-\pi_q^\nstar(n)/2}.
\]
Let $\eta = \ind{q~\text{odd}}$.
From \Cref{prop:PNT}(2),
\begin{align*}
    \pi_q^*(2n) &= \frac{1}{2n} \sum_{\text{odd}~d \mid n} \mu(d) (q^{n/d} - \eta), \\
    \pi_q^\nstar(n)/2 &= \frac{1}{2n} \br{\sum_{d \mid n} \mu(d) q^{n/d} - \sum_{\text{odd}~d \mid n/2} \mu(d) (q^{n/2d} - \eta) - (2 + \eta)\ind{n=1}} \\
    &= \frac{1}{2n} \br{\sum_{d \mid n} \mu(d) (q^{n/d} - \eta) - \sum_{\text{odd}~d \mid n/2} \mu(d) (q^{n/2d} - \eta) - 2\ind{n=1}} \\
    &= \frac{1}{2n} \br{\sum_{\text{odd}~d \mid n} \mu(d) (q^{n/d} - \eta) - 2 \sum_{\text{odd}~d \mid n/2} \mu(d) (q^{n/2d} - \eta) - 2\ind{n=1}}.
\end{align*}
Hence
\begin{align*}
    \log F(z)
    &= \sum_{n=1}^\infty \sum_{m=1}^\infty \br{\br{(-1)^m+1} \sum_{\text{odd}~d \mid n} \mu(d) (q^{n/d} - \eta)
    - 2\sum_{\text{odd}~d \mid n/2} \mu(d) (q^{n/2d} - \eta)} \frac{z^{2mn}}{2mn} \\
    &\hspace{2cm} + \log (1 - z^2).
\end{align*}
The coefficient of $z^N / N$ in the sum is
\[
    \sum_{n \mid N/2} \sum_{\text{odd}~d \mid n} \mu(d) (q^{n/d} - \eta) - \sum_{n \mid N} \sum_{\text{odd}~d \mid n/2} \mu(d) (q^{n/2d} - \eta) = 0.
\]
Hence\footnote{Is there a proof which is a little less humpty-dumpty?} $F(z) = 1- z^2$. Now taking the coefficient of $z^n$ gives the formula for $\sum_{f \in \calQ(n)} \lambda(f)$. The second formula follows similarly from
\[
    \sum_{f \in \P^*} \lambda(f) z^{\deg f} = \prod_{\substack{g~*\text{-irred}}} (1 - \lambda(g) z^{\deg g})^{-1} = (1 + z)^{-1 - \eta} F(z). \qedhere
\]
\end{proof}

\begin{remark}
    Let $q$ be odd and let $f \in \calQ(n)$. By a result of Ahmadi and Vega~\cite{ahmadi--vega}*{Theorem~12}, $\lambda(f)$ is $+1$ if and only if $(-1)^n f(1) f(-1)$ is a square in $\F_q$.
    This result can be used to give an alternate proof of \Cref{prop:parity_factors} in the odd-characteristic case.
\end{remark}

\subsection{Polynomials without low-degree factors}

The proof of the next proposition is loosely inspired by the Brun--Hooley sieve from analytic number theory.

\begin{proposition}
\label{prop:bounded_degree}
Let $1 \le k \le n / (10\log n)$.
\begin{enumerate}
\item Let $f \in \P_a(n)$ ($a \in \F_q^\times$) be uniformly random, let $I_k$ be the set of irreducible polynomials of degree $\le k$, and let $E_k$ be the event that $f$ has no factors in $I_k$. Then
\[\Prob(E_k) = \prod_{g \in I_k} (1 - q^{-\deg g}) + O(e^{-cn / k}).\]
\item Let $f \in \P^*_a(n)$ ($a = \pm 1$) be uniformly random, let $I_k$ be the set of $*$-irreducible polynomials of degree $\le k$, other than $X \pm 1$, and let $E_k$ be the event that $f$ has no factors in $I_k$. Then
\[\Prob(E_k) = \prod_{g \in I_k} (1 - q^{-(\deg g)/2}) + O(e^{-cn / k}).\]
Moreover, the same estimate holds if $n$ is even, $a = 1$, and $f$ is conditioned to have an even or odd number of irreducible factors and no factors of $X \pm 1$.
\item Let $f \in \P^\dagger_a(n)$ ($a \in U$) be uniformly random, let $I_k$ be the set of $\dagger$-irreducible polynomials of degree $\le k$, and let $E_k$ be the event that $f$ has no factors in $I_k$. Then
\[\Prob(E_k) = \prod_{g \in I_k} (1 - q^{-(\deg g)/2}) + O(e^{-cn / k}).\]
\end{enumerate}
\end{proposition}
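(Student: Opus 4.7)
The plan is to apply a Brun--Hooley sieve, treating the irreducible factors of $f$ as analogues of prime divisors of integers. I describe case (1) in detail; cases (2) and (3) are parallel, with $*$-irreducible and $\dagger$-irreducible polynomials in place of irreducibles, and with the density estimates $H^*(\I^*_{\le n}), H^\dagger(\I^\dagger_{\le n}) = \log n + O(1)$ (already derived) playing the role of $H(\I_{\le n}) = \log n + O(1)$.

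Partition $I_k = \bigsqcup_{d\le k} I_k^{(d)}$ by degree and start from
\[
\ind{E_k} = \prod_{d \le k} \prod_{g \in I_k^{(d)}}\bigl(1 - \ind{g\mid f}\bigr).
\]
For each $d$, truncate the inner product at a level $j_d$ via Bonferroni:
\[
T_d^{(j_d)} = \sum_{\substack{S_d \subseteq I_k^{(d)} \\ |S_d| \le j_d}}(-1)^{|S_d|}\ind{\textstyle\prod_{g \in S_d}g \mid f}.
\]
Taking all $j_d$ even (resp.\ odd) and multiplying across $d$ gives upper (resp.\ lower) bounds for $\ind{E_k}$; this is the standard Brun--Hooley step. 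The lower bound requires each $T_d^{(j_d)}$ to be non-negative, which will be guaranteed by the boundedness of the density $\pi_q(d)q^{-d}\asymp 1/d$.

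The key parameter choice is $j_d$ with both $\sum_d d j_d < n$ (a degree budget) and each per-group tail $\ll (1/d)^{j_d+1}/(j_d+1)!$ bounded by $e^{-cn/k}$; these two conditions are simultaneously realisable thanks to the hypothesis $k \le n/(10\log n)$. Under the degree constraint, every squarefree product appearing in $\prod_d T_d^{(j_d)}$ has degree strictly less than $n$, so its divisibility probability factorises cleanly as $q^{-\deg}$, giving
\[
\E \prod_{d \le k} T_d^{(j_d)} = \prod_{d \le k} \sum_{|S_d| \le j_d} (-1)^{|S_d|} q^{-d|S_d|} \binom{\pi_q(d)}{|S_d|}.
\]
The untruncated version of the right-hand side is exactly $\prod_{g \in I_k}(1 - q^{-\deg g})$, the target. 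Summing the per-group tail errors (after absorbing a harmless overall factor coming from $\sum_d \pi_q(d)q^{-d} \ll \log k$) yields the claimed $O(e^{-cn/k})$ bound. Cases (2) and (3) go through verbatim after substituting the appropriate densities from Proposition~\ref{prop:PNT}.

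For the conditional variant of (2), conditioning on $\lambda(f) = \pm 1$ together with $f \in \calQ$ reduces to estimating $\E[\lambda(f)\ind{E_k \cap \calQ}]$ (using Proposition~\ref{prop:parity_factors}, which gives $\Prob(\calQ, \lambda = \pm 1) = \tfrac{1}{2}\Prob(\calQ)$ exactly for $n > 2$). Expanding $\ind{E_k \cap \calQ}$ by inclusion--exclusion and writing $f = \prod_{g \in S} g \cdot h$ with $\lambda(f) = \lambda(\prod g)\lambda(h)$, each contribution reduces to $\sum_{h \in \calQ(n - \deg S)}\lambda(h)$, which by Proposition~\ref{prop:parity_factors} vanishes unless $n - \deg S \in \{0, 2\}$; choosing $j_d$ so that $\sum d j_d < n - 2$ kills these boundary contributions in the truncated sieve. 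The main technical obstacle throughout is the joint balancing of degree budget, tail error, and Brun--Hooley positivity, which the hypothesis $k \le n/(10\log n)$ precisely enables.
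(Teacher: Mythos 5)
Your approach is a genuine Brun--Hooley sieve with per-degree truncation levels $j_d$, whereas the paper's proof is simpler: it applies a single global Bonferroni truncation, setting $\mathcal{E}=\{\ind{g\mid f}:g\in I_k\}$ and truncating inclusion--exclusion at the single level $m=\floor{(n-1)/k}$. Since $|A|\le m$ forces $\sum_{g\in A}\deg g\le mk\le n-1<n$, every term retained in the truncated sum has exact probability $\prod_{g\in A}q^{-\deg g}$, and the discarded tail is $\le\sum_{i\ge m}H(I_k)^i/i!$, which is exponentially small because $m\gg \log k$ under the hypothesis $k\le n/(10\log n)$. No per-degree bookkeeping and no positivity of partial sieve weights is ever needed.

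There is a real gap in your version of the Brun--Hooley lower bound. You write: ``Taking all $j_d$ even (resp.\ odd) and multiplying across $d$ gives upper (resp.\ lower) bounds for $\ind{E_k}$; this is the standard Brun--Hooley step,'' and then assert that positivity of $T_d^{(j_d)}$ is ``guaranteed by the boundedness of the density $\pi_q(d)q^{-d}\asymp 1/d$.'' Neither statement holds. With $j_d$ odd, $T_d^{(j_d)}$ is merely a lower bound for the indicator $\ind{\text{no factor of degree }d}$ and can be negative pointwise: for a specific $f$ divisible by many degree-$d$ irreducibles, the alternating sum truncated after an odd term can go far below zero, and the size of $\pi_q(d)q^{-d}$ is irrelevant to this pointwise sign. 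Consequently, multiplying the $T_d^{(j_d)}$ over $d$ does \emph{not} give a lower bound for $\prod_d\ind{\text{no factor of degree }d}$, since a product of terms each of which is a (possibly negative) lower bound is not itself a lower bound. The actual Brun--Hooley lower-bound mechanism uses all $j_d$ even (so each $T_d^{(j_d)}\ge\ind{E_d}\ge 0$), writes $\ind{E_d}=T_d^{(j_d)}+R_d$ with $R_d\le 0$, and then expands the product $\prod_d(T_d^{(j_d)}+R_d)$ keeping track of the negative remainders via an identity like $\prod(a_d+b_d)\ge\prod a_d+\sum_d b_d\prod_{d'\ne d}a_{d'}$ valid under sign conditions. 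Your write-up skips this entirely, so the lower bound is unjustified as stated. Your treatment of the conditional variant of part (2) via $\E[\lambda(f)\ind{E_k\cap\calQ}]$ and Proposition~\ref{prop:parity_factors} is sound in outline; the paper instead notes (again via Proposition~\ref{prop:parity_factors}) that the conditional probability $\Prob(g\mid f)$ is still exactly $q^{-(\deg g)/2}$ for $\deg g\le n-4$, and simply reruns the same one-level Bonferroni argument with $m=\floor{(n-4)/k}$, which is cleaner.
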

\begin{proof}
\begin{enumerate}[wide]
\item Recall the classical Bonferroni inequalities (truncated inclusion--exclusion), which state that, for any collection of events $\mathcal{E}$,
\begin{align*}
    \Prob\br{\bigcup\mathcal{E}} &\le \sum_{i=1}^m (-1)^{m-1} \sum_{\substack{\mathcal{A} \subset \mathcal{E} \\ |\mathcal{A}| = i}} \Prob\br{\bigcap \mathcal{A}}&& (m~\text{odd}), \\
    \Prob\br{\bigcup\mathcal{E}} &\ge \sum_{i=1}^m (-1)^{m-1} \sum_{\substack{\mathcal{A} \subset \mathcal{E} \\ |\mathcal{A}| = i}} \Prob\br{\bigcap \mathcal{A}}&& (m~\text{even}).
\end{align*}
We apply these with $\mathcal{E} = \{\ind{g \mid f} : g \in I_k\}$ and $m = \floor{(n-1) / k}$. Note that $E_k$ is the complement of $\bigcup \mathcal{E}$. Provided $\deg g \le n-1$, the probability that $g \mid f$ is exactly $q^{-\deg g}$. Thus since $mk \le n-1$ we have
\[
    \sum_{\substack{\mathcal{A} \subset \mathcal{E} \\ |\mathcal{A}| = i}} \Prob\br{\bigcap \mathcal{A}} = \sum_{\substack{A \subset I_k \\ |A| = i}} \prod_{g \in A} q^{-\deg g}
\]
for all $i \le m$. It follows that
\[
    \left|\Prob(E_k) - \prod_{g \in I_k} (1 - q^{-\deg g})\right|
    \le \sum_{i \ge m} \sum_{\substack{A \subset I_k \\ |A| = i}} \prod_{g \in A} q^{-\deg g}
    < \sum_{i \ge m} \frac{H(I_k)^i}{i!}.
\]
Since $m > 3 \log k$ and $H(I_k) \le H_k \le 1 + \log k$, the sum above is
\[
    \ll H(I_k)^m / m! \ll (e H(I_k) / m)^m \le (C k \log k / n)^{-cn/k}.
\]

\item Similar. The only point to emphasize is that, since we have excluded $X \pm 1$, the product $g$ of any subset of $I_k$ is a $*$-symmetric polynomial of even degree with constant coefficient $+1$, so we have $\Prob(g \mid f) = q^{-(\deg g)/2}$ provided $\deg g < n$.
Alternatively one can use \eqref{eq:star-symmetric-reduction} and \eqref{eq:star-reductions}. We omit the details.

Now consider the case in which $f$ is conditioned to have no factors of $X \pm 1$ and an even or odd number of irreducible factors.
By the formulae \eqref{eq:P_a-size} and inclusion--exclusion, the number of $f \in \P_1^*(n)$ with no factors of $X \pm 1$ is $q^{n/2} - 2q^{n/2 - 1} + q^{n/2 - 2} = q^{n/2} (1 - q^{-1})^2$ if $q$ is odd and $q^{n/2} - q^{n/2 - 1} = q^{n/2} (1 - q^{-1})$ if $q$ is even. Applying \Cref{prop:parity_factors}, the number of $f$ with an even (or odd) number irreducible factors is exactly half that, provided $n > 2$. Hence if $g$ is the product of any subset of $I_k$ such that $\deg g \le n - 4$ then again $\Prob(g \mid f) = q^{-(\deg g) / 2}$. Hence we can repeat the argument of (1) using $m = \floor{(n-4) / k}$.

\item Similar.
\qedhere
\end{enumerate}
\end{proof}

\section{Preliminaries for groups of Lie type}
\label{sec:preliminaries}
In this section we prove some preliminary results on finite groups of Lie type. Since some readers may be more comfortable with polynomials than with groups, we give more details than one may normally do.

\subsection{Definitions}

Let us agree on the definitions of the finite classical groups.
Refer to one of the many books on the subject for more details (e.g., \cite{kleidman_liebeck}*{Chapter~2} or \cite{aschbacher-book}*{Chapter~7} or \cite{atlas}*{Chapter~2}).
Let $q$ be a power of a prime $p$.

\begin{itemize}[label=$\diamond$]
    \item $\GL_n(q)$ is the group of linear automorphisms of the finite vector space $\F_q^n$, whose members we can freely identify with matrices of nonzero determinant.
    \begin{itemize}[label=$\circ$]
        \item $\SL_n(q)$ is the subgroup of $\GL_n(q)$ consisting of matrices with determinant $1$.
    \end{itemize}
    \item $\GU_n(q)$ is the isometry group of a nondegenerate unitary form on $\F_{q^2}^n$.
    \begin{itemize}[label=$\circ$]
        \item $\SU_n(q)=\GU_n(q) \cap \SL_n(q^2)$.
    \end{itemize}
    \item $\Sp_{2n}(q)$ is the isometry group of a nondegenerate alternating form on $\F_q^{2n}$.
    \item $\Or^\vareps_n(q)$ is the isometry group of a nondegenerate quadratic form on $\F_q^n$. If $n$ is even then $\vareps \in \{+, -\}$ indicates the type of the quadratic form (the Witt defect is $(1- \vareps)/2$). If $n$ is odd then $q$ must be odd and $\vareps$ may be omitted, or we may write $\vareps = \circ$ according to notational convenience.
    \begin{itemize}[label=$\circ$]
        \item $\SO^\vareps_n(q) = \Or^\vareps_n(q) \cap \SL_n(q)$.
        \item $\Omega^\vareps_n(q) = \SO^\vareps_n(q)'$, which for $n\ge 5$ is the unique subgroup of $\SO^\vareps_n(q)$ of index $2$.
    \end{itemize}
\end{itemize}
We call the cases respectively \emph{linear}, \emph{unitary}, \emph{symplectic}, \emph{orthogonal}.
We will often elide the linear and unitary cases by writing $\SL_n(q) = \SL_n^+(q)$ and $\SU_n(q) = \SL_n^-(q)$, and similarly for $\GL$ and $\GU$.
In each case the corresponding projective group $G / \Z(G)$ is indicated by attaching the prefix $\opr{P}$, as in $\PGL$.

To avoid trivialities or repetitions we may assume $n \ge 2$ for $\GL_n(q)$, $n \ge 3$ for $\GU_n(q)$, $n \ge 2$ for $\Sp_{2n}(q)$, and $n \ge 7$ for $\Or_n^\vareps(q)$.
With these restrictions, the quotient groups
\begin{equation}
    \label{eq_simple}
    \PSL_n^\pm(q), \PSp_{2n}(q), \POm_n^\vareps(q)
\end{equation}
are all simple, except for $\PSL_2(2)$, $\PSL_2(3)$, $\PSU_3(2)$, and $\PSp_4(2)$.
These are the \emph{(finite) simple classical groups}.
We will refer to groups $G$ such that $G' = G$ and $G/\Z(G)$ is a simple classical group as \emph{quasisimple classical groups},
and groups $G$ such that $S \le G \le \Aut(S)$ for some simple classical group $S$ as \emph{almost simple classical groups}.

Our proof will mostly refer only to the classical quasisimple groups
\begin{align*}
    &G= \SL^\pm_n(q), \Sp_{2n}(q), \Omega^\varepsilon_{n}(q)  \\
    &\quad\text{with $G$ quasisimple}, \nonumber
\end{align*}
or more generally
\begin{align}
\label{eq:almost-quasisimple}
    &\SL^\pm_n(q)\leq G \leq \GL^\pm_n(q), G=\Sp_{2n}(q), \Omega^\varepsilon_{n}(q) \\
    &\quad \text{with $G'$ quasisimple}. \nonumber
\end{align}

Each of the groups $G$ in \eqref{eq:almost-quasisimple} is defined as a subgroup of $\GL_m(q)$ or $\GL_m(q^2)$ for some $m$. As usual an element $g\in G$ is called \emph{semisimple} if it is diagonalizable over $\overline{\F_q}$.
We call a conjugacy class semisimple if it consists of semisimple elements.
For finite groups, as here, semisemplicity is equivalent to having $p'$-order.

The simple classical groups make up the bulk of the finite simple groups of Lie type.
The remaining groups are called the \emph{exceptional groups of Lie type}.
They are best viewed through the lens of algebraic groups, which we now review. A complete treatment can be found for example in \cite{malle_testerman}.
Even in the case of classical groups we will find the perspective of algebraic groups useful on some occasions (particularly when it comes to Shintani descent).

If $X$ is a simple linear algebraic group over $\overline{\F_p}$, with Steinberg endomorphism $\sigma$, we write $X_\sigma=\{x\in X \mid x^\sigma = x\}$.
We require that $(X_\sigma)'$ is perfect, which holds in all but a handful of cases.
If $X$ is of adjoint type then $S = (X_\sigma)'$ is a finite simple group, \emph{a finite simple group of Lie type}.
The \emph{untwisted rank} of $S$ is the rank of $X$.

We canonically associate a parameter $q$ to the pair $(X,\sigma)$ as follows. Let $T$ be a $\sigma$-stable maximal torus of $X$, so $\sigma$ acts naturally on the character group $\Hom(T,\mathbf G_m)$. Then, the eigenvalues of $\sigma$ on $\Hom(T,\mathbf G_m)\otimes_{\mathbf Z} \mathbf C$ all have the same absolute value, which we denote by $q$, and which is a fractional power of $p$ (see \cite{malle_testerman}*{Lemma~22.1 and Proposition~22.2}). We will refer to $q$ as either the \emph{level} of $(X,\sigma)$, or the level of $X_\sigma$, or the level of $\sigma$ if $X$ is understood.
For classical groups $q$ is the parameter appearing in \eqref{eq_simple}.
For exceptional groups which are not Suzuki or Ree, $q$ is an integer and $\F_q$ can similarly be thought of as the field of definition.
For Suzuki and Ree groups, $q$ is not an integer but $q^2$ is an integer.

Assume now that $X$ is simple and of adjoint type, with Steinberg endomorphism $\sigma$, so that $(X_\sigma)'=S$ is a finite simple group of Lie type. The group of \emph{inner-diagonal automorphisms} is
\begin{equation*}
    \Inndiag(S) = X_\sigma,
\end{equation*}
so $\Inndiag(S) \leq \Aut(S)$.
Concretely, for $S=\PSL^\pm_n(q)$, $\Inndiag(S)=\PGL^\pm_n(q)$, and in all other cases $|\Inndiag(S):S|\leq 4$.
Moreover if $S$ has level $q$ then $|\Aut(S)/\Inndiag(S)|\ll \log q$.
See for instance \cite{gorenstein}*{Theorem~2.5.12} for the precise structure of $\Inndiag(S)/S$.

\subsection{Basic results on $k(G)$ and semisimple classes}

In this subsection we collect some general results on conjugacy classes, particularly their number $k(G)$.
The first is a basic general relation from \cite{gallagher} between $k(G)$ and $k(H)$ when $[G:H]$ is bounded.

\begin{lemma}
[\cite{gallagher}]
If $G$ is a finite group and $H$ is a subgroup of $G$, then
\[
    |G:H|^{-1} k(H)
    \le
    k(G)
    \le |G:H|  k(H).
\]
\end{lemma}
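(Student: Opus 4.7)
The plan is to introduce an auxiliary quantity $N$ equal to the number of orbits of $H$ on $G$ under the conjugation action, and to establish the sandwich
\[
    \max\{k(G),\, k(H)\} \;\le\; N \;\le\; |G:H| \cdot \min\{k(G),\, k(H)\},
\]
from which both inequalities of the lemma follow at once.

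The two lower bounds are essentially definitional. Each $G$-conjugacy class of $G$ decomposes as a disjoint union of $H$-orbits under conjugation, giving $k(G) \le N$; and each $H$-conjugacy class of $H$ is itself an $H$-orbit on $G$ (since $H$ is closed under its own conjugation), giving $k(H) \le N$.

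The upper bounds take slightly more work but are still routine. For $N \le |G:H| \cdot k(H)$, I would apply Burnside's lemma to the $H$-action on $G$ to get $N = |H|^{-1} \sum_{h \in H} |\C_G(h)|$, and combine with the elementary inequality $|\C_G(h)| \le |G:H| \cdot |\C_H(h)|$, which follows from $\C_H(h) = \C_G(h) \cap H$ and $|\C_G(h) \cdot H| \le |G|$. For $N \le |G:H| \cdot k(G)$, within a fixed $G$-class $g^G$ the $H$-orbits are in bijection with the $(H, \C_G(g))$-double cosets of $G$; since every such double coset is a union of left $H$-cosets (of which there are only $|G:H|$), each $G$-class contributes at most $|G:H|$ orbits to $N$, and summing over the $k(G)$ classes of $G$ finishes the sandwich.

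I do not anticipate any obstacle: the entire argument is a standard combination of Burnside's lemma and elementary coset arithmetic, and I expect Gallagher's original proof to run along essentially these lines.
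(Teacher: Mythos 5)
Your proof is correct. Note, however, that the paper does not actually reproduce a proof of this lemma: it is stated with an attribution to Gallagher's paper and used as a black box, so there is no in-paper argument to compare against. Your approach — introducing $N$, the number of $H$-orbits on $G$ under conjugation, and sandwiching $\max\{k(G),k(H)\}\le N\le |G:H|\cdot\min\{k(G),k(H)\}$ — is the standard one and all four estimates check out: $k(G)\le N$ since $G$-classes are unions of $H$-orbits; $k(H)\le N$ since the $H$-classes inside $H$ are distinct $H$-orbits; $N\le|G:H|\,k(H)$ by Burnside's lemma together with $|\C_G(h)|\le|G:H|\,|\C_H(h)|$ (which follows from $|\C_G(h)H|\le|G|$); and $N\le|G:H|\,k(G)$ since the $H$-orbits inside a fixed class $g^G\cong G/\C_G(g)$ correspond to double cosets in $H\backslash G/\C_G(g)$, each of which is a union of cosets of the form $Hy$, of which there are only $|G:H|$ in total. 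The only cosmetic point is the phrase ``left $H$-cosets'': the relevant decomposition of $Hx\C_G(g)$ is into cosets of the form $Hy$, and either name for these is defensible, but the count by $|G:H|$ is what matters and is right. Your sandwich actually yields a slightly stronger statement than the lemma requires (it bounds both $k(G)$ and $k(H)$ against $N$ simultaneously), which is a nice bonus.
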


\begin{lemma}
    \label{lem:passing-to-a-subgroup}
    Let $G$ be a group acting transitively on a finite set $\Omega$ and let $H \nsgp G$.
    Then
    \[
        \derang(G, \Omega) \ge \derang(H, \Omega) / |G:H|^2.
    \]
\end{lemma}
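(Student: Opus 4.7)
The plan is to estimate the numerator and denominator of $\derang(G, \Omega) = k^{\mathrm{der}}(G) / k(G)$ separately, where $k^{\mathrm{der}}(G)$ denotes the number of conjugacy classes of $G$ consisting of derangements on $\Omega$ (and similarly $k^{\mathrm{der}}(H)$; note this makes sense even though $H$ need not act transitively, since being a derangement is just the condition of having no fixed point). The denominator is handled immediately by Gallagher's inequality, stated just before the proposition: $k(G) \le |G:H| \cdot k(H)$, which accounts for one factor of $|G:H|$. The substantive task is therefore the matching bound $k^{\mathrm{der}}(G) \ge k^{\mathrm{der}}(H) / |G:H|$ for the numerator.

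To prove the latter I would first observe that, since $H \nsgp G$ and conjugation by any $g \in G$ permutes $\Omega$ (so $gxg^{-1}$ fixes $g\alpha$ iff $x$ fixes $\alpha$), the set of derangements in $H$ is a union of $G$-conjugacy classes, each contained in $H$. I would then invoke the standard fact that when $H \nsgp G$, any $G$-conjugacy class $C \subseteq H$ is a disjoint union of $H$-conjugacy classes forming a single orbit under the induced action of $G/H$ on the $H$-class set of $H$; consequently $C$ splits into at most $|G:H|$ many $H$-classes. Summing over all $G$-classes of derangements in $H$ gives
\[
    k^{\mathrm{der}}(H) \le |G:H| \cdot k^{\mathrm{der}}(G).
\]

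Combining this with Gallagher's bound yields
\[
    \derang(G, \Omega) = \frac{k^{\mathrm{der}}(G)}{k(G)} \ge \frac{k^{\mathrm{der}}(H) / |G:H|}{|G:H| \cdot k(H)} = \frac{\derang(H, \Omega)}{|G:H|^2},
\]
as required. There is no serious obstacle here: the argument is essentially formal, and the only point worth dwelling on is the $G/H$-orbit decomposition of a $G$-class contained in $H$ into $H$-classes, which is a classical observation.
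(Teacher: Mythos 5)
Your proof is correct and follows essentially the same route as the paper's: both rely on the observation that each $G$-class contained in $H$ splits into at most $|G:H|$ $H$-classes (giving $k^{\mathrm{der}}(G) \ge k^{\mathrm{der}}(H)/|G:H|$) and then invoke Gallagher's bound $k(G) \le |G:H|\,k(H)$. The paper states this more tersely; you have simply spelled out the normality/$G$-invariance of the derangement condition and the $G/H$-orbit structure underlying the splitting bound.
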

\begin{proof}
    Every $G$-conjugacy class contained in $H$ splits into at most $|G:H|$ $H$-conjugacy classes.
    It follows that the number of $G$-conjuacy classes containing derangements is at least $k(H) \derang(H, \Omega) / |G:H|$.
    Hence the result follows from the previous lemma.
\end{proof}

Next we need several results from \cite{fulman2012_conjugacy_classes}.
The first gives a bound for $k(G)$ for groups of Lie type,
and also a bound for the number of non-semisimple classes.

\begin{theorem}
[\cite{fulman2012_conjugacy_classes}*{Theorem 1.1}]
\label{t:fulman_guralnick_bound}
    Let $X$ be a simple linear algebraic group of rank $r$ over $\overline{\F_p}$, and let $\sigma$ be a Steinberg endomorphism of $X$ of level $q$. Then
    \[
        k(X_\sigma) = q^r + O(q^{r-1}).
    \]
    Moreover the number of non-semisimple conjugacy classes is $O(q^{r-1})$.
\end{theorem}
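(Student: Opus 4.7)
The plan is to split conjugacy classes of $X_\sigma$ via the Jordan decomposition and count semisimple and non-semisimple classes separately: the semisimple count will give the main term $q^r$, while non-semisimple classes will contribute the error $O(q^{r-1})$. This immediately gives the second assertion once the first is in hand.

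For the semisimple count, I would begin with Steinberg's classical theorem: when $X$ is simply connected, the number of semisimple conjugacy classes of $X_\sigma$ is exactly $q^r$. For other isogeny types, one passes to the simply connected cover $\pi \colon \widetilde X \to X$; the induced map on $\sigma$-fixed points has bounded kernel and cokernel (bounded in terms of the order of the fundamental group, which depends only on the root system), and a standard comparison of semisimple class counts across the isogeny then gives $q^r + O(q^{r-1})$.

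For the non-semisimple count, each such $g \in X_\sigma$ has a unique Jordan decomposition $g = su$ with $s$ semisimple, $u \neq 1$ unipotent, and $su = us$; the $X_\sigma$-class of $g$ is determined by the $X_\sigma$-class of $s$ together with the $C_{X_\sigma}(s)$-class of $u$. If $s$ is \emph{regular} semisimple then $C_X(s)$ is a maximal torus, which contains no nontrivial unipotent, so $s$ must be non-regular. I would then combine two bounds: (a) the number of non-regular semisimple classes of $X_\sigma$ is $O(q^{r-1})$, since the non-regular semisimple locus is a subvariety of codimension $\geq 1$ in the $r$-dimensional semisimple variety, and a Lang--Weil-type estimate controls its $\F_q$-points; and (b) for each non-regular $s$, the connected centralizer $C_X(s)^\circ$ is a proper reductive subgroup of $X$ whose number of unipotent classes in $C_{X_\sigma}(s)$ is bounded in terms of its root datum (a classical finiteness result going back to Lusztig).

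The main obstacle is ensuring uniformity in both $q$ and the rank $r$. The number of unipotent classes in a reductive group can grow with rank (it is $p(n)$ for $\GL_n$, for instance), so a naive product of (a) and (b) would not suffice. The fix is to stratify non-regular semisimple elements by the isomorphism type of their connected centralizer: if $M = C_X(s)^\circ$ has semisimple rank $k \geq 1$, then the number of such $s$ (up to $X_\sigma$-conjugacy) is $O(q^{r-k})$, and this extra savings in $k$ compensates for the growth in the unipotent count of $M_\sigma$ when summed over strata. Making this bookkeeping uniform across all families of classical and exceptional types, with an absolute implicit constant, is the technical heart of the theorem.
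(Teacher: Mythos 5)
This statement is cited in the paper from Fulman--Guralnick and is not proved there, so there is no ``paper's own proof'' to compare against; your sketch must be judged on its own terms and against the actual Fulman--Guralnick argument. Your high-level strategy (Jordan decomposition, Steinberg's exact count $q^r$ of semisimple classes in the simply connected case, stratifying non-semisimple classes by centralizer type) is a reasonable and natural route, and it is close in spirit to part of what Fulman--Guralnick do; but their proof also relies heavily on explicit generating-function formulas for $k(G)$ in the classical families (going back to Macdonald, Wall, and others) and on case analysis for exceptional types, rather than on a purely geometric/uniform argument of the kind you propose.

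Two places in your sketch need more than a gesture. First, the appeal to Lang--Weil for the non-regular semisimple locus is not quite the right tool: what is actually used is Steinberg's theorem identifying semisimple classes of $X_\sigma$ ($X$ simply connected) with $\F_q$-points of $T/W \cong \mathbf A^r$ (Chevalley restriction), after which the non-regular locus is the discriminant hypersurface and direct counting gives $O(q^{r-1})$ --- no Lang--Weil needed, and this version yields a uniform constant. More seriously, for the stratified sum
\[
\sum_{k\ge 1}\ \#\{\text{strata with centralizer of semisimple rank }k\}\cdot q^{r-k}\cdot u(k),
\]
where $u(k)$ bounds the number of unipotent classes in a reductive group of semisimple rank $k$, you assert that the ``extra savings in $k$'' makes this $O(q^{r-1})$. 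That is plausible but not automatic: the number of $W$-conjugacy classes of possible centralizer types grows with the rank (already in type $A_{n-1}$ it is $p(n)$-like), and $u(k)$ grows like $e^{c\sqrt k}$. One does get a convergent sum $q^{r-1}\sum_{k\ge 1} q^{-(k-1)}\cdot(\text{combinatorial factor})$ for $q\ge 2$, but making the combinatorial factor honest requires genuine bookkeeping over centralizer types; this is precisely where Fulman--Guralnick resort to explicit type-by-type formulas. You should also account for the component group $C_X(s)/C_X(s)^\circ$ when $X$ is not simply connected (it is bounded by the fundamental group, but the comparison across isogenies you allude to must handle it). So: correct skeleton, right identification of the difficulty, but the technical core --- the uniform-in-rank summation --- is left essentially as an assertion, which is the hardest part of the theorem.
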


\begin{lemma}
[\cite{fulman2012_conjugacy_classes}*{Corollaries~3.7 and 3.11}]
\label{lem:k-for-GL-and-PGL}
\leavevmode
\begin{enumerate}
    \item Assume $\SL_n^\pm(q) \le G \le \GL_n^\pm(q)$ and $t = |\GL_n^\pm(q) : G|$. Then
    \[
        k(G) \asymp q^n / t.
    \]
    \item Assume $\PSL_n^\pm(q) \le G \le \PGL_n^\pm(q)$ and $t = |\PGL_n^\pm(q) : G|$. Then
    \[
        k(G) \asymp q^{n-1} / t.
    \]
\end{enumerate}
\end{lemma}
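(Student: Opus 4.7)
The plan is to interpolate between the extremal groups $\SL_n^\pm(q)$ and $\GL_n^\pm(q)$ (respectively between $\PSL_n^\pm(q)$ and $\PGL_n^\pm(q)$) using Gallagher's lemma, given the base asymptotic estimates
\[
    k(\GL_n^\pm(q)) \asymp q^n, \qquad k(\SL_n^\pm(q)) \asymp q^{n-1},
\]
\[
    k(\PGL_n^\pm(q)) \asymp q^{n-1}, \qquad k(\PSL_n^\pm(q)) \asymp q^{n-1}/d,
\]
where $d = \gcd(n, q \mp 1)$. Once these are in hand, part (1) is immediate: for $\SL_n^\pm(q) \le G \le \GL_n^\pm(q)$ one has $|G : \SL_n^\pm(q)| = (q \mp 1)/t$ and $|\GL_n^\pm(q) : G| = t$, so Gallagher's lemma gives
\[
    k(\GL_n^\pm(q))/t \;\le\; k(G) \;\le\; ((q \mp 1)/t) \cdot k(\SL_n^\pm(q)),
\]
and both sides are $\asymp q^n / t$. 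For part (2), the analogous inequalities with $|G : \PSL_n^\pm(q)| = d/t$ yield $k(\PGL_n^\pm(q))/t \le k(G) \le (d/t) \cdot k(\PSL_n^\pm(q))$, and both sides are $\asymp q^{n-1}/t$; note that this is where the more refined estimate $k(\PSL_n^\pm(q)) \asymp q^{n-1}/d$ is essential, since the weaker $k(\PSL_n^\pm(q)) \ll q^{n-1}$ would lose a factor of $d$ on the upper side.

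To establish the base asymptotics, the linear case $\GL_n(q)$ is classical: conjugacy classes are parameterized by rational canonical form, that is, by partition-valued functions $\phi$ on monic irreducibles $f \ne X$ subject to $\sum_f (\deg f) |\phi(f)| = n$. The generating function
\[
    \sum_{n \ge 0} k(\GL_n(q)) \, u^n \;=\; \prod_{f \ne X} \prod_{i \ge 1} (1 - u^{i \deg f})^{-1}
\]
extracts a polynomial in $q$ of degree exactly $n$, yielding $k(\GL_n(q)) \asymp q^n$. The unitary case $\GU_n(q)$ is handled analogously, with classes parameterized by data respecting the symmetry $f \mapsto f^\dagger$ from \Cref{sec:anatomy} (or by Ennola duality).

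To pass from $\GL$ to $\SL$ one analyzes, for each $\GL$-class $C = g^{\GL}$ contained in $\SL$, the number of $\SL$-classes into which $C$ splits; by orbit counting this is
\[
    |\GL_n^\pm(q) : \SL_n^\pm(q) \cdot C_{\GL_n^\pm(q)}(g)| \;=\; (q \mp 1)/|\det C_{\GL_n^\pm(q)}(g)|.
\]
Summing this over all $\GL$-classes with $\det = 1$, and controlling the average splitting factor in terms of the Jordan type, produces $k(\SL_n^\pm(q)) \asymp q^{n-1}$. Analogous reasoning modulo scalars yields $k(\PGL_n^\pm(q)) \asymp q^{n-1}$ and $k(\PSL_n^\pm(q)) \asymp q^{n-1}/d$. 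The main obstacle in the whole argument is this splitting/merging bookkeeping, where one must track determinants and centralizer component groups carefully; by contrast, the Gallagher interpolation from the extremes to the intermediate $G$ is essentially automatic.
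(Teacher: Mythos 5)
The paper does not prove this lemma --- it is quoted directly from Fulman--Guralnick (the cited Corollaries~3.7 and~3.11), so there is no internal argument to compare against. Your Gallagher interpolation from the two extreme groups to an intermediate $G$ is correct and tidy: taking the lower bound from Gallagher applied to the pair $(G, \GL_n^\pm(q))$ and the upper bound from Gallagher applied to the pair $(\SL_n^\pm(q), G)$, and similarly in the projective case, does give $k(G) \asymp q^n/t$ and $q^{n-1}/t$ respectively once the four base estimates are known. However, the entire substance of the cited corollaries \emph{is} those four base estimates, and the step you describe as ``controlling the average splitting factor in terms of the Jordan type'' is a genuine argument, not mere bookkeeping. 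The danger point is the upper bound $k(\SL_n^\pm(q)) \ll q^{n-1}$: a $\GL$-class contained in $\SL$ can split into as many as $d = \gcd(n, q\mp 1)$ pieces, and since the number of $\GL$-classes with unit determinant is already $\asymp q^{n-1}$, the naive bound is only $k(\SL_n^\pm(q)) \ll d\, q^{n-1}$, off by a factor of $d$ which can grow with $n$. The real content is that classes whose centralizer has deficient determinant image (equivalently, splitting number $>1$) are sparse --- roughly $O(q^{n-2})$ of them --- so the $d$-fold splitting on that small subset is absorbed; the merging analysis passing to $\PSL_n^\pm(q)$ requires a parallel estimate. You acknowledge this is ``the main obstacle,'' so it is not an oversight, but it must actually be carried out for the argument to stand; with those base asymptotics taken as input, your Gallagher reduction is a valid and arguably cleaner way to handle all intermediate $G$ at once rather than repeating the splitting analysis for each.
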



The following lemma is a special case of \cite{fulman2012_conjugacy_classes}*{Lemma~2.2}. In the statement, a conjugacy class $C$ of $N$ is called $a$-stable if $C^a=C$.
\begin{lemma}
    \label{l_shintani_analogue}
    Let $G$ be a finite group and let $N$ be a normal subgroup of $G$ with $G=\gen{N,a}$ and $a\in G$.
    The number of $N$-classes in $Na$ is equal to the number of $a$-stable conjugacy classes in $N$.
\end{lemma}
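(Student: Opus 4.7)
The plan is to apply Burnside's orbit-counting lemma to the conjugation action of $N$ on the coset $Na$, reinterpreted through a twisted action.

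First I would observe that the bijection $\phi : N \to Na$, $x \mapsto xa$, transports the $N$-conjugation action on $Na$ into the \emph{twisted} action of $N$ on itself: for $n, x \in N$,
\[
n(xa)n^{-1} = (nx \cdot a n^{-1} a^{-1})\, a = (nx \sigma(n)^{-1})\, a,
\]
where $\sigma \in \Aut(N)$ denotes conjugation by $a$. Hence the $N$-orbits on $Na$ correspond bijectively to the $N$-orbits on $N$ under $n \cdot x = n x \sigma(n)^{-1}$. Since $G = \langle N, a \rangle$, the $\sigma$-stable $N$-conjugacy classes in $N$ are exactly the $a$-stable ones, so it suffices to count twisted orbits.

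Next I would apply Burnside's lemma to get
\[
\#\{N\text{-orbits on } Na\} = \frac{1}{|N|} \sum_{n \in N} |\{xa \in Na : n(xa)n^{-1} = xa\}|,
\]
and simplify the condition $n(xa) n^{-1} = xa$ to $x^{-1} n x = \sigma(n)$. Thus the inner set is empty unless $n$ and $\sigma(n)$ are $N$-conjugate, in which case it is a coset of $C_N(n)$ and has size $|C_N(n)|$.

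Finally I would group the sum by $\sigma$-stable $N$-conjugacy classes $C \subset N$: since $n \in C$ implies $\sigma(n) \in C$ hence $\sigma(n) \sim_N n$, each class $C$ contributes
\[
\sum_{n \in C} |C_N(n)| = |C| \cdot \frac{|N|}{|C|} = |N|,
\]
while classes that are not $\sigma$-stable contribute $0$. Dividing by $|N|$ gives exactly the number of $\sigma$-stable $N$-conjugacy classes, as desired. The argument is entirely elementary; the only subtle point is checking the identity $n(xa)n^{-1} = (n x \sigma(n)^{-1}) a$ and the translation from twisted conjugacy to the fixed-point count, but there is no real obstacle.
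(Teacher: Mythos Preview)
Your proof is correct. The paper does not actually give a proof of this lemma; it simply cites \cite{fulman2012_conjugacy_classes}*{Lemma~2.2}, of which the statement is a special case. Your Burnside/twisted-conjugacy argument is the standard elementary proof of this fact and is entirely sound. One small remark: the sentence ``Since $G = \langle N, a\rangle$, the $\sigma$-stable $N$-conjugacy classes in $N$ are exactly the $a$-stable ones'' is superfluous, since $\sigma$ is \emph{defined} as conjugation by $a$; the hypothesis $G = \langle N, a\rangle$ is not actually used in the argument (nor is it needed for the conclusion).
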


The previous result is similar to a tool known as \emph{Shintani descent} for algebraic groups,
which is an essential tool for understanding the conjugacy classes of almost simple groups.
The following general version is from \cite{harper2021shintani} (see \cite{harper2021shintani}*{Theorem~2.1 and Remark~2.3}).

\begin{lemma}
[Shintani descent]
\label{l_shintani_descent}
Let $X$ be a connected linear algebraic group over $\overline{\F_p}$, and let $\sigma_1$ and $\sigma_2$ be commuting Steinberg endomorphisms of $X$.
Consider the cosets
\[
    X_{\sigma_1} \sigma_2 \subset X_{\sigma_1} \rtimes \gen{\sigma_2}
    ~ \text{and}~
    X_{\sigma_2} \sigma_1 \subset X_{\sigma_2} \rtimes \gen{\sigma_1}.
\]
There is a bijection
\[
	\{\textup{$X_{\sigma_1}$-classes in $X_{\sigma_1}\sigma_2$ }\}
	\longleftrightarrow
	\{\textup{$X_{\sigma_2}$-classes in $X_{\sigma_2}\sigma_1$}\}.
\]
\end{lemma}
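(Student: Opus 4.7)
The plan is to prove this via the classical Lang--Steinberg argument. First recall Lang's theorem for connected linear algebraic groups: for any Steinberg endomorphism $\sigma$ of $X$, the map $X \to X$ given by $g \mapsto g^{-1}\sigma(g)$ is surjective. Both $\sigma_1$ and $\sigma_2$ satisfy this, and the hypothesis that they commute will allow the two resulting ``norm'' maps to interact.

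Given $x\sigma_2 \in X_{\sigma_1}\sigma_2$, I would apply Lang--Steinberg for $\sigma_2$ to write $x = g^{-1}\sigma_2(g)$ for some $g \in X$, and set $y := g\sigma_1(g)^{-1}$. The first substantive claim is that $y \in X_{\sigma_2}$: using $\sigma_1(x) = x$ together with $\sigma_1\sigma_2 = \sigma_2\sigma_1$, a short direct calculation shows that $\sigma_2(g)^{-1}g$ is $\sigma_1$-fixed, which is equivalent to $y = \sigma_2(y)$. The assignment
\[
    [x\sigma_2]_{X_{\sigma_1}} \longmapsto [y\sigma_1]_{X_{\sigma_2}}
\]
will be the claimed bijection.

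Next I would check well-definedness against the two obvious ambiguities. First, the choice of $g$ with $x = g^{-1}\sigma_2(g)$ is unique up to replacing $g$ by $k^{-1}g$ with $k \in X_{\sigma_2}$; substituting, $y$ gets replaced by $k^{-1}y\sigma_1(k)$, which is exactly the effect of $X_{\sigma_2}$-conjugation by $k$ on the coset element $y\sigma_1$. Second, if $x\sigma_2$ is replaced by $hx\sigma_2(h)^{-1}\sigma_2$ for some $h \in X_{\sigma_1}$, then one can take the new $g$ to be $gh^{-1}$; since $\sigma_1(h) = h$, the resulting $y$ is literally unchanged. Hence the map descends to a well-defined map between the two sets of conjugacy classes.

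Finally, the construction is manifestly symmetric under $\sigma_1 \leftrightarrow \sigma_2$, so the same procedure run from the other side (Lang--Steinberg applied to $\sigma_1$, writing $y = g'\sigma_1(g')^{-1}$ and setting $x = g'^{-1}\sigma_2(g')$) gives a map in the reverse direction, and a routine check identifies the composition with the identity in both orders. The main obstacle is really only bookkeeping: one has to track several commutativity and fixed-point relations at once to verify both that $y$ lands in $X_{\sigma_2}$ and that the two ambiguities above translate to the correct $X_{\sigma_2}$-conjugacy on the target. Nothing deeper than Lang--Steinberg enters the argument.
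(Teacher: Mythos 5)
Your argument is correct: this is the standard Lang--Steinberg construction of the Shintani descent bijection, and all the verifications (that $y\in X_{\sigma_2}$, that changing $g$ by $X_{\sigma_2}$ gives $X_{\sigma_2}$-conjugate representatives of $y\sigma_1$, that conjugating the source by $X_{\sigma_1}$ leaves $y$ unchanged, and that the construction is self-inverse since the same $g$ serves both directions) go through exactly as you outline. The paper does not prove this lemma but cites Harper's Theorem~2.1 and Remark~2.3, whose proof is precisely this Lang--Steinberg argument (going back to Kawanaka), so your approach coincides with the referenced one.
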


For example, if $X=\GL_n(\overline{\F_p})$, $\sigma_1$ maps each matrix entry to its $q$-th power, and $\sigma_2=\sigma_1^b$, then $X_{\sigma_1} = \GL_n(q)$ and $X_{\sigma_2} = \GL_n(q^b)$.
In this case \Cref{l_shintani_descent} asserts that the number of $\GL_n(q^b)$-classes in the coset $\GL_n(q^b)\sigma_1$ is equal to the number of conjugacy classes of $\GL_n(q)$.

\subsection{Maximal subgroups of classical groups}
Aschbacher~\cite{aschbacher1984maximal} gave a description of the maximal subgroups of the almost simple classical groups in terms of the (projective) action of $G$ on its defining module.

Aschbacher divided the maximal subgroups not containing the socle into nine classes, which can be roughly described as follows; here $V$ denotes the natural module for the group.

\begin{itemize}
    \item[$(\ca C_1)$] Stabilizers of certain subspaces of $V$.
    \item[$(\ca C_2)$] Stabilizers of direct sum decompositions $V=V_1\oplus \cdots \oplus V_t$.
    \item[$(\ca C_3)$] Extension field subgroups.
    \item[$(\ca C_4)$] Stabilizers of tensor product decompositions $V=V_1 \otimes V_2$.
    \item[$(\ca C_5)$] Subfield subgroups.
    \item[$(\ca C_6)$] Symplectic-type subgroups.
    \item[$(\ca C_7)$] Stabilizers of tensor product decompositions $V=V_1\otimes \cdots \otimes V_t$.
    \item[$(\ca C_8)$] Classical subgroups in natural action.
    \item[$(\ca S)$] Almost simple groups acting absolutely irreducibly, and not belonging to the previous classes.
\end{itemize}



Recall that, when $q$ is even, $\Sp_{2n}(q)$ can be identified with $\SO_{2n+1}(q)$ -- the group of isometries of a nonsingular
quadratic form $Q$ on $V=\F_q^{2n+1}$ (here nonsingular means that $V^\perp$ is an anisotropic subspace).
Now, $\Sp_{2n}(q)$ contains maximal subgroups $\SO^+_{2n}(q)$ and $\SO^-_{2n}(q)$, which are usually placed in class $\ca C_8$, as in \cite{kleidman_liebeck} for example.
However, under the identification $\Sp_{2n}(q)\cong\SO_{2n+1}(q)$, these subgroups correspond to stabilizers of nondegenerate hyperplanes of plus and minus type, respectively. The following alternative convention is therefore reasonable and adopted in this paper.

\begin{convention}
\label{convention_c1}
When $q$ is even, the maximal subgroups $\SO^+_{2n}(q)$ and $\SO^-_{2n}(q)$ of $\Sp_{2n}(q)$ belong to class $\ca C_1$.
\end{convention}

This convention is implicit in \cite{fulman2012_conjugacy_classes}*{Theorem~1.3}.

\subsection{Semisimple classes and polynomials}

We review the well-known correspondence between conjugacy classes of semisimple elements of classical groups and polynomials. Much of our discussion follows \cite{fulman2013regular_semisimple}.

In non-orthogonal groups, the semisimple classes are in one-to-one correspondence with suitable sets of polynomials. For the reader's convenience, we give a proof of this fact, using standard tools from the theory of algebraic groups.

\begin{lemma}
\label{l_semisimple_classes_characteristic_polynomial}
Let $G$ be one of $\GL_n(q)$, $\GU_n(q)$, $\Sp_{2n}(q)$ and assume $G'$ is quasisimple.
Any two semisimple elements of $G$ are $G'$-conjugate if and only if they have the same characteristic polynomial.
\end{lemma}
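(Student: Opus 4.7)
The plan is to reduce everything to a Lang--Steinberg argument in the ambient algebraic group. The forward implication is immediate, since conjugate matrices share a characteristic polynomial. For the converse, I would first show that two semisimple elements of $G$ with the same characteristic polynomial are $G$-conjugate, and then show that the conjugating element may be chosen to lie in $G'$.

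For the first step, let $X$ be the ambient algebraic group over $\overline{\F_p}$: namely $X = \GL_n$ equipped with the standard Frobenius for $G = \GL_n(q)$, $X = \GL_n$ with the twisted Frobenius $A \mapsto (A^{(q)})^{-T}$ for $G = \GU_n(q)$, and $X = \Sp_{2n}$ for $G = \Sp_{2n}(q)$. Write $\sigma$ for the corresponding Steinberg endomorphism, so $X_\sigma = G$. In each case it is classical that two semisimple elements of $X(\overline{\F_p})$ with the same characteristic polynomial are $X$-conjugate, since the characteristic polynomial determines the multiset of eigenvalues and (in the symplectic case) this determines the orbit. The centralizer $Z_X(x)$ of a semisimple $x$ is connected --- it is a Levi factor in the two $\GL_n$ cases, and a product of symplectic and general linear factors in the $\Sp_{2n}$ case --- so by Lang--Steinberg the map $Z_X(x) \to Z_X(x)$, $z \mapsto z^{-1}\sigma(z)$, is surjective, and this $H^1$ vanishing upgrades $X$-conjugacy to $G$-conjugacy.

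For the second step, if $G = \Sp_{2n}(q)$ then $G' = G$ by the quasisimple hypothesis and we are done. Otherwise set $Y = \SL_n \leq X$ (as an algebraic subgroup), so $Y_\sigma = G'$ and $X/Y \cong \mathbf{G}_m$ via $\det$. Given $g \in G$ with $g x g^{-1} = y$, the set of all $G$-transporters is the coset $g \cdot C_G(x)$, and this coset meets $G'$ if and only if $\det(g) \in \det(C_G(x))$. It therefore suffices to show that $\det \colon C_G(x) \to \det(G)$ is surjective. To see this I would note that $Z_X(x)$, being a Levi subgroup, contains the central torus $Z(X) \cong \mathbf{G}_m$, so $\det \colon Z_X(x) \to \mathbf{G}_m$ is already surjective; and its kernel $Z_X(x) \cap Y$ is the kernel of a character on a connected reductive group, hence connected. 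Lang--Steinberg applied to the short exact sequence
\[
    1 \to Z_X(x) \cap Y \to Z_X(x) \xrightarrow{\det} \mathbf{G}_m \to 1
\]
of $\sigma$-stable connected algebraic groups then yields the surjection $C_G(x) = Z_X(x)^\sigma \to \mathbf{G}_m^\sigma = \det(G)$, completing the argument.

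The delicate ingredient is the connectedness of $Z_X(x)$ (and hence of $Z_X(x) \cap Y$). This is exactly the property that fails for orthogonal groups, where a semisimple element with $\pm 1$-eigenspaces of odd dimension has a disconnected centralizer; this is why the orthogonal case is excluded here and must be treated separately in the next lemma.
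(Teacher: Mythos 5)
Your overall plan --- first get $G$-conjugacy from Lang--Steinberg, then upgrade to $G'$-conjugacy by a determinant argument --- is a genuinely different decomposition from the paper's, which works directly in $H' = \SL_n(\overline{\F_q})$ and shows in one step that $(H')_\sigma$ acts transitively on $(s^{H'})_\sigma = (s^H)_\sigma$. However, your second step contains a real error. You assert that ``the kernel of a character on a connected reductive group is connected,'' and use this to conclude that $Z_X(x) \cap Y$ is connected. This is false as a general principle: the squaring character $\mathbf{G}_m \to \mathbf{G}_m$ has kernel $\mu_2$, and more to the point a character such as $A \mapsto \det(A)^2$ on $\GL_n$ has disconnected kernel. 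The conclusion you want (that $Z_X(x) \cap \SL_n$ is connected) \emph{is} true, but it requires a different argument --- for instance the one the paper actually gives: write the semisimple element as $x = x'z$ with $x' \in \SL_n$ and $z$ central, note $Z_{\SL_n}(x) = Z_{\SL_n}(x')$, and apply Steinberg's connectedness theorem to the simply connected group $\SL_n$. (Alternatively, one can argue directly that $\ker(\det)$ on a Levi $\prod_i \GL_{n_i}$ of $\GL_n$ is connected by fibering over $\prod_{i\ge 2}\GL_{n_i}$ with fibers that are $\SL_{n_1}$-cosets; but this relies on $\det$ being the ordinary determinant, not an arbitrary character.) A smaller gap: in step~1 for $\Sp_{2n}$, you assert that the characteristic polynomial ``determines the orbit'' over $\overline{\F_p}$ without argument; this is the substantive content requiring proof in the symplectic case, and the paper devotes a paragraph to it (matching up totally singular complements and using transitivity of $\Sp_{2n}(K)$ on such pairs). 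You would need to supply that argument. Your final remark about the failure of connectedness for orthogonal groups is correct and is exactly why that case is excluded from the lemma.
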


\begin{proof}
The ``only if'' part is clear, so we focus on the ``if'' part.

The linear and unitary cases can be treated uniformly, as follows.
Let $K = \overline{\F_q}$ and put $H=\GL_n(K)$ and $Z = \Z(H) \cong K^*$, so that $H=H' Z$ with $H' = \SL_n(K)$.
In particular, for a semisimple element $s=s'z$ of $H$, with $s'\in H'$ and $z\in Z$, we have $\C_{H'}(s) = \C_{H'}(s')$. By a theorem of Steinberg \cite{malle_testerman}*{Theorem 14.16}, it follows that $\C_{H'}(s)$ is connected. Now, for every Steinberg endomorphism $\sigma$ of $H$, the Lang--Steinberg theorem \cite{malle_testerman}*{Theorem~21.11} implies that, if $s\in H_\sigma$ is semisimple, then $(H')_\sigma$ acts transitively by conjugation on $(s^{H'})_\sigma = (s^H)_\sigma$. Now, two semisimple elements of $H$ with the same characteristic polynomial are conjugate in $H$. We can choose $\sigma$ so that $H_\sigma = \GL_n^\pm(q)$, which proves the statement for these two groups.

Assume now $H=H'=\Sp_{2n}(K)$. By the same argument as above, we have that for every semisimple element $s\in H_\sigma = \Sp_{2n}(q)$, $\Sp_{2n}(q)$ acts transitively by conjugation on $(s^H)_\sigma$. In particular, in order to prove the statement it is enough to show that two semisimple elements $s$ and $t$ of $H$ with the same characteristic polynomial are conjugate in $H$.

The space $K^{2n}$ splits as a direct sum $V=W\oplus W'$ of two totally singular spaces invariant under $s$, and similarly for $t$, say $V=U\oplus U'$. We may choose $U$ so that the characteristic polynomial of $s$ on $W$ is equal to the characteristic polynomial of $t$ on $U$. Since $\Sp_{2n}(K)$ acts transitively on pairs of complementary maximal totally singular spaces, we may conjugate $t$ and assume that $W=U$, $W'=U'$. Since $\Sp_{2n}(K)$ contains a subgroup $\GL(W)$ stabilizing the decomposition, we may then conjugate $s$ to $t$. This concludes the proof.
\end{proof}

Next we consider orthogonal groups. We use the following notation:
\[
    O=\Or^\varepsilon_n(q),\quad S=\SO^\varepsilon_n(q),\quad \Omega=\Omega^\varepsilon_n(q),
\]
where $\varepsilon \in\{+,-\}$ if $n$ is even and $\varepsilon = \circ$ if $n$ is odd.

For orthogonal groups in odd characteristic, two elements of $O$ with the same characteristic polynomial need not be conjugate. We now specify when this happens.

We define $\ca M(n)$ to be a subset of the polynomials $f \in \ca P^*_{(-1)^n}(n)$ with some additional data. To be precise, let
\[
    \ca M(n) = \ca M_0(n) \cup \ca M_1(n) \cup \ca M_2(n),
\]
where
\begin{enumerate}[start=0]
    \item $\ca M_0(n)$ is the set of polynomials $f \in \ca P_{(-1)^n}^*(n)$ without $\pm1$ as a root and such that the number of $*$-symmetric irreducible factors of $f$ is even if $\vareps = +$ and odd if $\vareps = -$,
    \item $\ca M_1(n)$ is the set of polynomials $f$ with exactly one root in $\{\pm1\}$ (ignoring multiplicity),
    \item $\ca M_2(n)$ is the set of polynomials $f$ with two roots in $\{\pm1\}$, together with an additional datum $\xi \in \{+, -\}$.
\end{enumerate}
Note that $\ca M_0(n) = \emptyset$ if $n$ is odd and $\ca M_2(n) = \emptyset$ if $q$ is even.
Consider now the map
\[
    \Phi\colon \text{\{semisimple $O$-classes contained in $S$\}} \to \ca M(n),
\]
given by mapping a class $C$ to its characteristic polynomial and, if $q$ is odd and the elements of $C$ have both $1$ and $-1$ as eigenvalues, the type $\xi$ of the $1$-eigenspace (which is always nondegenerate). Here we use the convention that a space of odd dimension has plus type if it has square discriminant.

\begin{fact}
\label{fact}
$\Phi$ is a bijection.
\end{fact}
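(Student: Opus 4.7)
The plan is to verify that $\Phi$ is well-defined, injective, and surjective.

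First, \textbf{well-definedness}. For $g \in O$ one has $\chi_g = \chi_g^*$ (as $g$ and $g^{-1}$ are $O$-conjugate) and $\chi_g(0) = (-1)^n \det g$, which equals $(-1)^n$ iff $g \in S$, so $\chi_g \in \P_{(-1)^n}^*(n)$. For semisimple $g$ the primary decomposition $V = \bigoplus_\phi V_\phi$ (over irreducible factors $\phi$ of $\chi_g$) is orthogonal in the sense that $V_\phi$ is non-degenerate when $\phi = \phi^*$, while $V_\phi \oplus V_{\phi^*}$ forms a hyperbolic pair when $\phi \ne \phi^*$. The orthogonal type of $V_\phi$ for $\phi$ a $*$-symmetric irreducible of degree $\ge 2$ is determined by $\phi$ and its multiplicity, and the product of these types across all such $\phi$ (the hyperbolic blocks contributing $+$) equals $\vareps$ times the type of $V_1 \perp V_{-1}$. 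When $\pm 1 \notin \opr{roots}(\chi_g)$ this is exactly the parity condition in the definition of $\ca M_0(n)$; when exactly one of $\pm 1$ is a root the type of that eigenspace is forced by $\vareps$; and when both are roots (necessarily $q$ odd) the type $\xi$ of the $1$-eigenspace is clearly an $O$-conjugacy invariant.

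Second, \textbf{injectivity}. Suppose $g_1, g_2 \in S$ are semisimple with $\Phi(g_1) = \Phi(g_2) = (f, \xi)$. The two primary decompositions of $V$ have matching block types: the non-$\pm 1$ $*$-symmetric blocks by the formula above, the hyperbolic blocks by construction, and the $\pm 1$ eigenspaces by $\xi$ (if present) or by the value forced by $\vareps$ and the remaining blocks. Hence by Witt's extension theorem there is $h_0 \in O$ carrying the blocks of $g_1$ onto those of $g_2$. Within each non-degenerate block the restrictions of $g_1, g_2$ have the same minimal polynomial $\phi$ and hence are conjugate in $\Or(V_\phi)$, by an argument analogous to \Cref{l_semisimple_classes_characteristic_polynomial} (reducing to the Hermitian form over $\F_q[X]/(\phi)$ and invoking the Lang--Steinberg theorem on the connected centralizer). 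Composing $h_0$ with these block-wise conjugations yields $h \in O$ with $h^{-1} g_1 h = g_2$.

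Third, \textbf{surjectivity}. Given $(f, \xi) \in \ca M(n)$, write $f = (X - 1)^{a_+}(X+1)^{a_-} \prod_i \phi_i^{m_i} \prod_j (\psi_j \psi_j^*)^{n_j}$, with $\phi_i$ $*$-symmetric irreducible of degree $\ge 2$ and $\psi_j \ne \psi_j^*$. Assemble $V$ block-by-block: a non-degenerate block of the forced type carrying the action of multiplication by $X$ on $\F_q[X]/(\phi_i^{m_i})$ for each $\phi_i^{m_i}$; a hyperbolic block with characteristic polynomial $(\psi_j \psi_j^*)^{n_j}$ for each pair; and $\pm 1$-eigenspaces of appropriate dimensions. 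In the $\ca M_2$ case $\xi$ pins down the type of the $1$-eigenspace and the complementary type is then determined; in the $\ca M_1$ case the single $\pm 1$-block type is uniquely determined; in the $\ca M_0$ case the parity condition makes the total Witt type equal $\vareps$. The resulting $g$ satisfies $\chi_g = f$ and $\det g = (-1)^n f(0) = 1$, so $g \in S$ and $\Phi(g) = (f, \xi)$.

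The technical heart of the argument is the type formula for the $*$-symmetric irreducible primary blocks $V_\phi$ (degree $\ge 2$). This is a classical computation, due in essentially this generality to Wall, obtained by identifying $V_\phi$ with a Hermitian space over the extension $\F_q[X]/(\phi)$ equipped with the involution induced by $X \mapsto X^{-1}$, and reading off the orthogonal Witt type via the trace to $\F_q$. Granted this input, the remaining verification is routine Witt-theoretic bookkeeping.
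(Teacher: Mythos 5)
The paper gives no proof, deferring entirely to Fulman--Guralnick (Lemmas 5.2 and 5.8 of the cited reference); your argument is a correct self-contained reconstruction of the same Wall-theoretic route that those lemmas follow, namely the orthogonal primary decomposition $V = \bigoplus_\phi V_\phi$, the hyperbolicity of the pair-blocks $V_\psi \oplus V_{\psi^*}$, the type formula for the $*$-symmetric blocks, and Witt bookkeeping to determine which $\pm 1$-eigenspace types can occur. The one external input you (rightly) defer is Wall's type formula for the $*$-symmetric primary blocks --- concretely, that if $\phi$ is a $*$-symmetric irreducible of degree $\ge 2$ occurring with multiplicity $m$, then $V_\phi$ has type $(-1)^m$ --- and it is worth noting that this is precisely the statement needed to make the parity condition defining $\ca M_0(n)$ come out right, and it is used again implicitly by the paper in Case 3 of the proof of \Cref{t_c1}.
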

\begin{proof}
    See \cite{fulman2013regular_semisimple}*{Lemma~5.2 and Lemma~5.8} (note the authors work with regular classes but the proof is valid in general).
\end{proof}

If $q$ is even, then every semisimple class of $S$ is contained in $\Omega$, since $|S:\Omega|=2$ and semisimple elements have odd order. If $q$ is odd, however, this is not true, and some care is needed in order to identify semisimple classes in $\Omega$. We now handle this issue.

\begin{lemma}
	[\cite{kleidman_liebeck}*{Proposition~2.5.13}]
\label{l_conditions_-1}
Assume that $q$ is odd and $n\equiv 0\pmod 4$. Let $\Omega = \Omega^-_{n}(q)$. Then $-1\in S \sm \Omega$ and $S = \Omega \times \langle -1 \rangle.$
\end{lemma}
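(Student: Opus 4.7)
The plan is to establish both claims via a direct computation using the spinor norm.

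First, since $n$ is even, $\det(-1) = (-1)^n = 1$, so automatically $-1 \in S$. The nontrivial content is showing $-1 \notin \Omega$. For this I would recall that $\Omega$ is the kernel inside $S$ of the spinor norm $\theta \colon S \to \F_q^\times / (\F_q^\times)^2$. Picking an orthogonal basis $e_1, \dots, e_n$ of the underlying quadratic space, I would write $-1 = r_{e_1} r_{e_2} \cdots r_{e_n}$, where $r_v$ denotes the reflection in the hyperplane $v^\perp$. Since $\theta(r_v) = Q(v) \pmod{(\F_q^\times)^2}$, we obtain
\[
    \theta(-1) = \prod_{i=1}^n Q(e_i) \pmod{(\F_q^\times)^2},
\]
which is exactly the discriminant of the form modulo squares.

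The key input is then the well-known identification of the type of a nondegenerate quadratic form of even dimension via its discriminant: for $n$ even, a form of type $\vareps$ has discriminant a square if and only if $\vareps = (-1)^{n/2}$ (see \cite{kleidman_liebeck}*{Proposition~2.5.10}). With $n \equiv 0 \pmod 4$, type $+$ corresponds to square discriminant and type $-$ to non-square discriminant. Hence for $\Or^-_n(q)$ with $n \equiv 0 \pmod 4$ we have $\theta(-1) \ne 1$, so $-1 \in S \setminus \Omega$.

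Finally, for the decomposition: since $|S : \Omega| = 2$ and $-1 \in S \setminus \Omega$, we have $S = \langle \Omega, -1 \rangle$. The element $-1$ is central in $S$, and $\Omega \cap \langle -1 \rangle = 1$ by the previous paragraph, so the subgroup $\langle -1 \rangle$ is normal and intersects $\Omega$ trivially, giving $S = \Omega \times \langle -1 \rangle$ as claimed. The only delicate step is the discriminant computation (the claim that minus type in dimension $n \equiv 0 \pmod 4$ forces non-square discriminant); everything else is bookkeeping.
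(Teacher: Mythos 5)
The paper gives no proof of this lemma; it is cited directly to Kleidman--Liebeck, Proposition~2.5.13. Your argument supplies a self-contained proof via the spinor norm, which is the standard route and matches what one would find in a textbook treatment, so there is no conflict of method.

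Your proof is correct for the case at hand, but one intermediate claim is misstated in a way worth flagging. You assert that for $n$ even, a form of type $\vareps$ has square discriminant if and only if $\vareps = (-1)^{n/2}$. This is only true when $n/2$ is even. When $n/2$ is odd, the plus-type form in dimension $n$ has discriminant $(-1)^{n/2} = -1$ modulo squares, so whether it is a square depends on $q \bmod 4$ (it is a square iff $q \equiv 1 \pmod 4$), and the minus-type form's discriminant is a non-square times $-1$, so the roles can flip depending on $q$. The correct general statement (which is what Kleidman--Liebeck Prop.~2.5.10 actually records) involves both the parity of $n/2$ and $q \bmod 4$. Fortunately, in your lemma $n \equiv 0 \pmod 4$, so $n/2$ is even, the plus-type discriminant is $1$ (a square), the minus-type discriminant is a non-square, and your spinor-norm computation $\theta(-1) = \text{disc} \ne 1$ in $\F_q^\times/(\F_q^\times)^2$ goes through exactly as you say. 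The final bookkeeping ($|S:\Omega| = 2$, $-1$ central and outside $\Omega$, hence $S = \Omega \times \langle -1 \rangle$) is fine. So the proof is correct; just restate the discriminant criterion carefully (or restrict it to the case $4 \mid n$, which is all you need).
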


In the following lemma, for a normal subset $Y$ of $O$, we denote by $\ca C(Y)$ the set of $O$-classes contained in $Y$.

\begin{lemma}
\label{l_equidistribution_cosets}
Assume that $q$ is odd. Let $\ca A(n)$ be the subset of $\ca M(n)$ consisting of those elements whose underlying polynomial $f$ satisfies ($\star_k$) for some $k\in [1,n/2] \cap 4\mathbf Z$, where ($\star_k$) is the following condition:
\begin{itemize}
\item[($\star_k$)] $f$ has an odd number of $*$-symmetric irreducible factors of degree $k$.
\end{itemize}
Then
\begin{equation}
	\label{eq:An-even-split}
    |\Phi^{-1}(\ca A(n)) \cap \ca C(\Omega)| = |\Phi^{-1}(\ca A(n)) \cap \ca C(S\sm \Omega)|.
\end{equation}
Moreover, the same is true if, in both sides of the above equality, we further intersect with the classes having nonempty intersection with any fixed maximal subgroup of $S$ of class $\ca C_1$.
\end{lemma}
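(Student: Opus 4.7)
The plan is to construct a parity-flipping involution on $\Phi^{-1}(\ca A(n))$ that swaps $\ca C(\Omega)$ and $\ca C(S \sm \Omega)$. Recall that $\Phi^{-1}(f,\xi) \subset \Omega$ iff the spinor norm $\theta(g) \in \F_q^\times/(\F_q^\times)^2$ of any representative $g$ is a square. For a semisimple $g \in O$, the primary decomposition $V = \bigoplus V_i$ is orthogonal and $\theta(g) = \prod \theta(g|_{V_i})$; each $*$-symmetric irreducible factor $h \ne X \pm 1$ of $f$ of degree $k$ and multiplicity $m_h$ contributes a component $V_{h^{m_h}}$ isomorphic to an orthogonal sum of $m_h$ copies of the multiplicity-one cyclic module $V_h$ (of dimension $k$, of type $-$), so $\theta(g|_{V_{h^{m_h}}}) = \epsilon(h)^{m_h}$ for the intrinsic sign $\epsilon(h) := \theta(g|_{V_h}) \in \{\pm 1\}$. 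The remaining contributions (from non-$*$-symmetric pairs and from the $\pm 1$-eigenspaces with the $\xi$-datum) will not be disturbed by the involution below.

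The key equidistribution claim is that for $k \in 4\Zint$ the map $h \mapsto h(-X)$ is a fixed-point-free involution of $\ca H_k$ (the set of $*$-symmetric monic irreducible polynomials of degree $k$) that flips $\epsilon$. It is clearly a well-defined involution of $\ca H_k$: $h(-X)$ is again monic, $*$-symmetric, and irreducible of degree $k$. Any fixed point would satisfy $h(X) = g(X^2)$ with $g$ monic $*$-symmetric irreducible of degree $k/2$, so the roots of $g$ would lie in the cyclic group of order $q^{k/2}+1$ in $\F_{q^{k/2}}^\times$; but for $q$ odd, every element of this group is a square in $\F_{q^{k/2}}^\times$ (as $q^{k/2} - 1$ is even), so $\sqrt\omega \in \F_{q^{k/2}}$ for every root $\omega$ of $g$, which forces $h$ to split over $\F_{q^{k/2}}$ and contradicts irreducibility of degree $k$. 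Moreover $-I \in \SO(V_h)$ (since $\dim V_h = k$ is even) commutes with $g_h$, and its spinor norm $\theta(-I_{V_h}) = \opr{disc}(V_h)$ is a non-square because $V_h$ is of type $-$ and $k(k-1)/2$ is even for $4 \mid k$; hence $\epsilon(h(-X)) = \theta(-g_h) = \opr{disc}(V_h) \cdot \epsilon(h) = -\epsilon(h)$.

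Writing $\tau_k$ for the involution $h \mapsto h(-X)$ of $\ca H_k$, given $(f, \xi) \in \ca A(n)$, let $k_0$ be the smallest $k \in [1, n/2] \cap 4\Zint$ such that $\sum_{h \in \ca H_k} m_h(f)$ is odd, and set $f' := \prod_{h \notin \ca H_{k_0}} h^{m_h(f)} \cdot \prod_{h \in \ca H_{k_0}} \tau_{k_0}(h)^{m_h(f)}$. The map $(f,\xi) \mapsto (f', \xi)$ is an involution of $\ca A(n)$ (since $\tau_{k_0}$ preserves degree and the multiset of multiplicities of degree-$k_0$ factors), and a short calculation using $\epsilon(\tau_{k_0}(h)) = -\epsilon(h)$ gives
\[
\theta(\Phi^{-1}(f',\xi)) \big/ \theta(\Phi^{-1}(f,\xi)) = (-1)^{\sum_{h \in \ca H_{k_0}} m_h(f)} = -1,
\]
so it pairs $\ca C(\Omega) \cap \Phi^{-1}(\ca A(n))$ with $\ca C(S \sm \Omega) \cap \Phi^{-1}(\ca A(n))$, proving \eqref{eq:An-even-split}. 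For the refinement, a maximal $\ca C_1$ subgroup of $S$ is the stabilizer of a nondegenerate or totally singular subspace of prescribed dimension and type; a semisimple $O$-class $\Phi^{-1}(f,\xi)$ meets this subgroup iff $f$ admits a factorization $f = f_1 f_2$ such that the corresponding $g$-invariant orthogonal decomposition $V = V_1 \perp V_2$ has $V_1$ of the specified type. Since $\tau_{k_0}$ only permutes $\ca H_{k_0}$ (preserving degree) and every $V_h$ with $h \in \ca H_{k_0}$ has the same dimension $k_0$ and type $-$, applying $\tau_{k_0}$ to the degree-$k_0$ factors of $f_1$ yields a bijection between valid factorizations of $f$ and those of $f'$ preserving $\dim V_1$ and its type, so the involution preserves membership in any fixed $\ca C_1$ subgroup. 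The main technical point is the equidistribution of $\epsilon$ on $\ca H_k$, which is handled by the fixed-point-free involution in the second paragraph; I expect the spinor-norm multiplicativity over primary components, though standard, to be the most delicate bookkeeping point to verify carefully.
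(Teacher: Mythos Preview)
Your approach is essentially the same as the paper's: both select the smallest admissible $k_0$, apply the involution $h \mapsto h(-X)$ to the $*$-symmetric irreducible factors of degree $k_0$, and check that this flips the $\Omega$-coset. The paper packages the coset-flip more globally: it lets $W$ be the sum of all the $k_0$-dimensional $*$-symmetric primary blocks (so $\dim W \equiv 0 \pmod 4$ and $W$ has type $-$, being an orthogonal sum of an odd number of minus-type spaces), takes $h$ to be $-1$ on $W$ and $1$ on $W^\perp$, and invokes \Cref{l_conditions_-1} to get $h \in S \sm \Omega$. Your factor-by-factor spinor-norm computation amounts to the same thing, since $\theta(h) = \theta(-1|_W) = \opr{disc}(W)$, and your product $\prod_{h\in\ca H_{k_0}} (-1)^{m_h}$ is exactly this discriminant computed blockwise. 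The paper's $\ca C_1$ refinement is the one-line remark that $g$ and $gh$ have the same invariant subspaces, which is equivalent to your factorization-preservation argument.

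One small slip in your fixed-point-free argument: a $*$-symmetric irreducible $g$ of degree $k/2$ has roots in the cyclic group of order $q^{k/4}+1$ inside $\F_{q^{k/2}}^\times$, not $q^{k/2}+1$ (which does not even divide $q^{k/2}-1$). With the correct order the rest of your reasoning goes through: $q^{k/4}+1$ divides $(q^{k/2}-1)/2$ since $q^{k/4}-1$ is even, so every root of $g$ is a square in $\F_{q^{k/2}}$, whence $h$ splits there, contradiction.
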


\begin{proof}
Write
\[
	\ca A(n) =\bigcup_{k\in [1,n/2] \cap 4\mathbf Z}\ca A_k,
\]
where $\ca A_k$ denotes the subset of $\ca A(n)$ satisfying ($\star_k$), but not ($\star_j$) for $j \in [1,k) \cap 4\mathbf Z$.
It suffices to prove \eqref{eq:An-even-split} with $\ca A_k$ in place of $\ca A(n)$.

Let $g$ be a semisimple element of $S$ such that $\Phi(g^O) \in \ca A_k$.
Let $f$ be the characteristic polynomial of $g$. Let $W$ be the sum of the $g$-invariant $k$-dimensional subspaces of $V = \F_q^n$ corresponding to $*$-symmetric irreducible factors of $f$ of degree $k$.
Note that $W$ is nondegenerate, so $V = W \oplus W^\perp$.
By \Cref{l_conditions_-1}, $-1\in \SO(W)\sm \Omega(W)$.
Let $h \in S \sm \Omega$ be the element respecting the decomposition $V = W \oplus W^\perp$ and acting as $-1$ on $W$ and $1$ on $W^\perp$.
Then $gh \in S \sm \Omega g$ and $\Phi((gh)^O) \in \ca A_k$.
It is clear that the map $g^O \mapsto (gh)^O$ is a well-defined bijection of $\Phi^{-1}(\ca A_k)$,
and this proves the claim.
(On the level of $\ca M(n)$, the bijection is defined by replacing $X$ with $-X$ in each of the $*$-symmetric irreducble factors of degree $k$.)

The last statement of the lemma follows from the observation that $g$ and $gh$ have the same invariant subspaces.
\end{proof}

We finally collect the information that we need. Below we denote the multiplicative order of an element $a \in \F_q^\times$ by $|a|$.

\begin{enumerate}[(i)]
    \item Let $\SL_n(q) \leq G\leq \GL_n(q)$, with $|G:\SL_n(q)| = t$. By \Cref{l_semisimple_classes_characteristic_polynomial} we have a bijection
\begin{equation}
\label{eq_linear}
\Phi\colon \text{\{semisimple classes of $G$\}} \to \bigcup_{\substack{a\in \F_q^\times \\ |a| \mid t}}\ca P_{(-1)^na}(n),
\end{equation}
given by associating to each class the characteristic polynomial of its elements.

\item Let $\SU_n(q) \leq G\leq \GU_n(q)$, with $|G:\SU_n(q)| = t$. Then the characteristic polynomial of the elements of $G$ belongs to $\ca P^\dagger_{(-1)^na}(n)$, with $a \in \F_{q^2}^\times, |a|\mid t$. Hence by \Cref{l_semisimple_classes_characteristic_polynomial} we have a bijection
\begin{equation}
\label{eq_unitary}
\Phi\colon \text{\{semisimple classes of $G$\}} \to \bigcup_{\substack{a\in \F_{q^2}^\times \\ |a| \mid t}}\ca P^\dagger_{(-1)^na}(n).
\end{equation}
In the unitary case we emphasize that our polynomials have coefficients in $\F_{q^2}$ (so the results of \Cref{sec:anatomy} should be applied with $q^2$).

\item Let $G=\Sp_{2n}(q)$. Then the characteristic polynomial of the elements of $G$ belongs to $\ca P^*_1(2n)$. Hence by \Cref{l_semisimple_classes_characteristic_polynomial} we have a bijection
\begin{equation}
\label{eq_symplectic}
\Phi\colon \text{\{semisimple classes of $G$\}} \to \ca P^*_1(2n).
\end{equation}

\item Let $S=\SO^\varepsilon_{n}(q)$, $O=\Or^\varepsilon_n(q)$. For $n$ even, define $\ca B(n)$ as the set of semisimple $O$-classes contained in $S$ without $\pm 1$ as eigenvalue. For $n$ odd define $\ca B(n)$ as the set of semisimple $O$-classes contained in $S$ with an eigenvalue $1$ of multiplicity one and without eigenvalue $-1$. By \Cref{fact}, for $n$ even we have a bijection
\begin{equation}
\label{eq_orthogonal_even}
\Phi\colon \ca B(n) \to \ca M_0(n),
\end{equation}
and for $n$ odd we have a bijection
\begin{equation}
\label{eq_orthogonal_odd}
\Phi\colon \ca B(n) \to \ca N(n),
\end{equation}
where $\ca N(n)$ is the set of polynomials of the form $(X-1)f$, where $f\in \ca P^*_1(n-1)$ does not have $\pm 1$ as a root.
\end{enumerate}

In the orthogonal case, the following calculation will allow us to restrict attention to $\ca M_0(n)$ or $\ca N(n)$ according to whether $n$ is even or odd.

\begin{lemma}
\label{lem:B-restriction}
    Assume that $n\ge 3$.
    \begin{enumerate}[(i)]
        \item If $n$ is even, $|\ca M_0(n)| \asymp |\ca M(n)| \asymp |\ca P^*_1(n)|$.
        \item If $n$ is odd, $|\ca N(n)| \asymp |\ca M(n)| \asymp |\ca P^*_{-1}(n)|$.
    \end{enumerate}
\end{lemma}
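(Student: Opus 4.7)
The plan is to show that the ``main'' piece of $\ca M(n)$ (namely $\ca M_0(n)$ when $n$ is even, and (the image of) $\ca N(n)$ when $n$ is odd) has size of the same order as $|\ca P^*_{(-1)^n}(n)|$, while the remaining pieces $\ca M_1(n), \ca M_2(n)$ are an order of magnitude smaller.

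First I would dispose of $\ca M_1(n)$ and $\ca M_2(n)$. A $*$-symmetric polynomial divisible by $(X\mp 1)$ factors as $(X\mp 1)g$ with $g$ a $*$-symmetric polynomial of degree $n-1$ whose constant coefficient is forced by the requirement $f \in \ca P^*_{(-1)^n}(n)$; similarly for divisibility by $(X^2-1)$. Applying \eqref{eq:P_a-size} gives $|\ca M_1(n)|, |\ca M_2(n)| \ll q^{n/2-1}$ when $n$ is even and $\ll q^{(n-3)/2}$ when $n$ is odd (with a factor $2$ to account for the datum $\xi \in \{+,-\}$). These bounds are a factor of $q^{-1}$ below the target orders.

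The odd case (ii) is then essentially free: $\ca N(n)$ is by definition in bijection with $\{g \in \ca P^*_1(n-1) : g(\pm 1) \neq 0\}$, and inclusion--exclusion with \eqref{eq:P_a-size} gives $|\ca N(n)| \asymp q^{(n-1)/2}$. Multiplying by $(X-1)$ embeds $\ca N(n)$ into $\ca M_1(n) \subset \ca M(n)$, and since $\ca M_0(n)=\emptyset$ for $n$ odd the upper bound from the previous step yields $|\ca N(n)| \asymp |\ca M(n)| \asymp q^{(n-1)/2} = |\ca P^*_{-1}(n)|$.

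For the even case (i) the real point is a lower bound for $|\ca M_0(n)|$. Set $\calQ(n) = \{f \in \ca P^*_1(n) : f(\pm 1) \neq 0\}$; inclusion--exclusion gives $|\calQ(n)| \asymp q^{n/2}$. By \Cref{prop:parity_factors}, $\sum_{f \in \calQ(n)} \lambda(f) = 0$ for $n \ge 4$, so $\calQ(n)$ splits into two equal halves according to the parity of $\Omega(f)$. For $f \in \calQ(n)$ the non-$*$-symmetric irreducible factors occur in conjugate pairs $\{g,g^*\}$ of equal multiplicity, so $\Omega(f)\bmod 2$ equals the parity of the total multiplicity of the $*$-symmetric irreducible factors of $f$ --- precisely the parity condition defining $\ca M_0(n)$. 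Hence $|\ca M_0(n)| \asymp q^{n/2}$, and together with step one this gives $|\ca M_0(n)| \asymp |\ca M(n)| \asymp |\ca P^*_1(n)|$.

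The main conceptual step is this last identification between the Liouville-style sum in \Cref{prop:parity_factors} and the parity condition in the definition of $\ca M_0$; everything else reduces to bookkeeping with \eqref{eq:P_a-size} and inclusion--exclusion on divisibility by $X\pm 1$.
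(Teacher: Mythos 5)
Your approach matches the paper's: the key step in both is to use \Cref{prop:parity_factors} to show that the set $\{f \in \ca P^*_1(n) : f(\pm 1) \neq 0\}$ splits exactly in half according to the parity of the number of $*$-symmetric irreducible factors, which is the condition defining $\ca M_0(n)$; your explanation of why $\lambda(f)$ detects this parity (the non-$*$-symmetric factors pair up as $g, g^*$ with equal multiplicity) is the same implicit identification the paper relies on. The even case (i) is fine.

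There is, however, an arithmetic slip in the odd case that makes the writeup internally inconsistent. For $n$ odd, $f \in \ca P^*_{-1}(n)$ divisible by $X-1$ factors as $(X-1)g$ with $g \in \ca P^*_1(n-1)$ and $n-1$ even, so $|\ca P^*_1(n-1)| = q^{(n-1)/2}$, not $q^{(n-3)/2}$ -- indeed, \emph{every} element of $\ca P^*_{-1}(n)$ ($n$ odd) has $\pm 1$ as a root, so $\ca M_1(n) \cup \ca M_2(n)$ is the whole set and cannot have size $\ll q^{(n-3)/2}$. Your own subsequent estimate $|\ca N(n)| \asymp q^{(n-1)/2}$ together with $\ca N(n) \hookrightarrow \ca M_1(n)$ contradicts the claimed bound $|\ca M_1(n)| \ll q^{(n-3)/2}$. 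The fix is trivial: for $n$ odd one should not try to show $\ca M_1, \ca M_2$ are smaller, but simply note that the underlying-polynomial map $\ca M(n) \to \ca P^*_{-1}(n)$ is at most $2$-to-$1$, giving $|\ca M(n)| \le 2 |\ca P^*_{-1}(n)| = 2q^{(n-1)/2}$, and combine this with $|\ca N(n)| \asymp q^{(n-1)/2}$ and $\ca N(n) \subset \ca M(n)$.
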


\begin{proof}
    Assume first that $n$ is even and $q$ is odd. The number of polynomials in $\ca P^*_1(n)$ with at least one root in $\{1, -1\}$ is $|\ca P^*_1(n)|(2/q - 1/q^2)$. It follows from \Cref{prop:parity_factors} that
    \[
    |\ca M(n)| \ge |\ca M_0(n)| = |\ca P^*_1(n)|(1 - (2/q - 1/q^2))/2.
    \]
    Since $|\ca P^*_1(n)|\geq |\ca M(n)|/2$, we get (i) when $q$ is odd.

   Assume now that $n$ is even and $q$ is even. The number of polynomials in $\ca P^*_1(n)$ with $1$ as a root is $|\ca P^*_1(n)|/q$. It follows from \Cref{prop:parity_factors} that
   \[
   |\ca M(n)| \ge |\ca M_0(n)| = |\ca P^*_1(n)|(1-1/q)/2.
   \]
   Since $|\ca P^*_1(n)|\geq \ca |\ca M(n)|$, we get (i) when $q$ is even.

   The proof of (ii) is similar.
\end{proof}




\section{Diagonally almost simple groups}

In this section we will prove \Cref{thm:main-simple-case}. For the purpose of working up to the almost simple case it will be convenient to tackle the case of ``diagonally almost simple groups'', by which we mean almost simple groups of Lie type such that
\begin{equation}
    \label{eq:diag-almost-simple}
    S \le G \le \Inndiag(S),
\end{equation}
where $S$ is the socle of $G$.

\subsection{Bounded rank}

The bounded-rank case is comparatively easy (just as it is for uniform measure and classical Boston--Shalev).

\begin{proposition}
\label{prop:bounded-rank}
    Let $G$ be an almost simple group of Lie type satisfying \eqref{eq:diag-almost-simple}
    and acting faithfully and primitively on a set $\Omega$.
    Then $\derang(G,\Omega) \geq \eps(r)$ for a constant $\eps(r)>0$ depending on the rank $r$ of $G$.
\end{proposition}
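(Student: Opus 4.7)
The plan is to deduce this from the classical Boston--Shalev theorem (\Cref{t_boston_shalev}) using the bounded-rank relation \eqref{eq:cc-to-elements}. Since $G$ is primitive and $S = \Soc(G)$ is a nontrivial normal subgroup, $S$ is transitive on $\Omega$. Applying \Cref{t_boston_shalev} to $S$ yields $\delta(S, \Omega) \ge \delta$ for an absolute constant $\delta > 0$; since every derangement of $S$ is a derangement of $G$ on the same set and $[G:S] \le [\Inndiag(S):S] = O_r(1)$, this gives $\delta(G, \Omega) \gg_r 1$.

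Next I would restrict attention to regular semisimple elements. A standard estimate shows that the proportion of non-regular-semisimple elements of $G$ is $O_r(1/q)$: by \Cref{t:fulman_guralnick_bound} there are $O_r(q^{r-1})$ non-semisimple classes, and non-regular semisimple elements lie in centralisers of dimension strictly greater than $r$, contributing $O_r(|G|/q)$ elements in total. Hence for $q$ larger than some threshold $q_0(r)$, a proportion $\gg_r 1$ of elements of $G$ are regular semisimple derangements. By \eqref{eq:cc-to-elements} each regular semisimple class has size at most $C_r |G|/q^r$, so the number $N$ of such classes consisting of derangements satisfies $N \gg_r q^r$. Combining with the upper bound $k(G) = O_r(q^r)$ from \Cref{t:fulman_guralnick_bound}, we conclude
\[
    \derang(G, \Omega) \ge N/k(G) \gg_r 1,
\]
as required.

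It remains to handle the small-$q$ regime $q \le q_0(r)$, where $|G| = O_r(1)$ and thus $k(G) = O_r(1)$. Jordan's lemma supplies at least one derangement class, so $\derang(G, \Omega) \ge 1/k(G) \gg_r 1$. Taking the minimum over the two regimes completes the proof. This is the easy case of the main theorem, essentially the direction sketched in the introduction just after \eqref{eq:cc-to-elements}; the only technical ingredients are the bounded-rank centraliser estimates underlying \eqref{eq:cc-to-elements} and the routine bound on the proportion of non-regular-semisimple elements, both of which are standard. The genuinely hard work is postponed to the unbounded-rank case treated in subsequent sections.
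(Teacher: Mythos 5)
Your proposal is correct and follows essentially the same route as the paper's proof: apply Boston--Shalev to the socle, pass to regular semisimple elements (where the paper simply cites Guralnick--L\"ubeck for the $1-O(1/q)$ proportion rather than re-deriving it from centraliser dimensions), use \eqref{eq:cc-to-elements} to convert to a class count, and compare with $k(G)\asymp_r q^r$. The only cosmetic differences are that the paper first replaces $G$ by $S$ via \Cref{lem:passing-to-a-subgroup} whereas you work with $G$ directly, and that the paper disposes of the bounded-$q$ regime by noting there are only finitely many groups, which is the same observation you make via Jordan's lemma.
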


\begin{proof}
    Note that $[G:S] \ll_r 1$.
    Applying \Cref{lem:passing-to-a-subgroup}, we may replace $G$ with $S$ and hence assume $G$ is a simple group of Lie type of rank $r$ acting transitively.
    Assume that $G$ has level $q$.
    Fulman--Guralnick~\cite{fulman2003derangements} showed that $\delta(G,\Omega)\geq \eps'(r)$ for a constant $\eps'(r)>0$.
    By \cite{guralnick_lubeck}, the proportion of regular semisimple elements in $G$ is $1-O(1/q)$.
    Hence the number of regular semisimple derangements in $G$ is at least $(\eps'(r) - O(1/q)) |G| \ge \eps''(r) |G|$, unless $G$ is among finitely many groups which we may ignore.
    For a regular semisimple element $g$ of $G$, $|g^G|\asymp_r |G|/q^r$, so
    we deduce that there are $\gg_r q^r$ conjugacy classes of $G$ consisting of regular semisimple derangements.
    Since $k(G) \asymp_r q^r$ (see \Cref{t:fulman_guralnick_bound}), the statement is proved.
\end{proof}

\subsection{Classical groups}

Having dispensed with bounded-rank groups, it now suffices to consider classical groups.
The bulk of the work in proving \Cref{thm:main-simple-case} consists of establishing the following variant for quasisimple classical groups.

\begin{proposition}
\label{thm:main-classical-quasisimple}
There exist positive absolute constants $\eps$ and $n_0$ such that the following holds.
Let $G$ be a classical group as in \eqref{eq:almost-quasisimple} with $n\geq n_0$, and let $M$ be a maximal subgroup of $G$ not containing $G'$.
Then $\derang(G,M) \ge \eps$.
\end{proposition}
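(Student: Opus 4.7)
The plan is to reduce to semisimple conjugacy classes, translate the derangement condition into a condition on characteristic polynomials via the correspondences of Section~\ref{sec:preliminaries}, and then perform a case analysis on the Aschbacher class of $M$, applying the anatomical estimates of Section~\ref{sec:anatomy}. I would first invoke \Cref{t:fulman_guralnick_bound} together with \Cref{lem:k-for-GL-and-PGL} to see that $k(G) \asymp q^r$ while the number of non-semisimple classes is $O(q^{r-1})$, so it suffices to exhibit a constant fraction of semisimple classes consisting of derangements on $G/M$. Via \eqref{eq_linear}--\eqref{eq_orthogonal_odd} and \Cref{lem:B-restriction} these classes correspond to monic polynomials of the appropriate (palindromic) flavor over $\F_q$ or $\F_{q^2}$; a semisimple $g$ has a fixed point on $G/M$ iff some $G$-conjugate of $g$ lies in $M$, which for each Aschbacher class I would translate into a structural condition on the characteristic polynomial of $g$.

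For class $\ca C_1$ (stabilizer of a subspace of dimension $k$, including the symplectic case covered by \Cref{convention_c1}), $g \in M^G$ forces the characteristic polynomial of $g$ to have a factor of the appropriate type of degree $k$. When $k$ is bounded I would use \Cref{prop:bounded_degree} to see that a positive proportion of polynomials have no factors of degree $\le k$; when $k$ grows (up to $\lfloor n/2 \rfloor$), \Cref{prop:mult-problem} bounds the proportion of polynomials with such a factor by $O(k^{-\delta}(1+\log k)^{-1/2})$, which is bounded below $1$. For class $\ca C_2$ (decompositions into $t$ equal blocks) and $\ca C_3$ (extension field subgroups of index $t$), the characteristic polynomial must satisfy property $P_t$, controlled by \Cref{prop:p_r}. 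Classes $\ca C_4$--$\ca C_8$ and $\ca S$ give subgroups $M$ of index at least $q^c$ in $G$ for some positive $c$ (by known bounds of Liebeck and others), so a crude estimate on $|M^G|$ combined with the standard lower bound for $|g^G|$ for generic semisimple $g$ shows that only a vanishing fraction of semisimple classes meets $M^G$.

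The main obstacle will be the orthogonal case. Here one must distinguish $\Omega$-classes from $\SO$-classes, and the bijection of \Cref{fact} sends semisimple $O$-classes in $\SO$ to the set $\ca M(n)$, which does not directly record whether the class lies in $\Omega$ or in its nontrivial coset. I would handle this using \Cref{l_equidistribution_cosets} together with \Cref{prop:technical_4Z}, which jointly show that all but a negligible proportion of classes are equidistributed between $\Omega$ and $\SO \sm \Omega$, so one can pass freely between polynomial counts and $\Omega$-class counts; the same considerations apply inside $M$ when $M$ is of Aschbacher class $\ca C_1$. A secondary concern is uniformity in $q$, but the anatomical estimates of Section~\ref{sec:anatomy} are uniform in $q$, and $k(G)$ and the polynomial counts scale the same way in $q$, so a single absolute $\eps$ emerges from the analysis.
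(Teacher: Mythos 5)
Your overall strategy (reduce to semisimple classes via \Cref{t:fulman_guralnick_bound}, translate to polynomial conditions via \eqref{eq_linear}--\eqref{eq_orthogonal_odd}, then case by Aschbacher class) matches the paper's. However there are genuine gaps in the case analysis.

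The most serious error is your treatment of class $\ca C_2$. You claim that if $g \in M^G$ with $M$ of class $\ca C_2$ (stabilizer of a direct sum of $t$ isometric blocks), then the characteristic polynomial of $g$ satisfies property $P_t$. This is false: property $P_t$ is the condition governing $\ca C_3$ (extension field subgroups), not $\ca C_2$. For a concrete counterexample, take $m=1$, $t=n$, so $M \cap \GL_n(q)$ contains the full diagonal torus; a generic diagonal matrix has characteristic polynomial splitting into $n$ distinct linear factors, which is as far from having property $P_n$ as possible. The paper's actual argument for $\ca C_2$ is quite different: it splits on the cycle structure of the permutation of blocks induced by $g$. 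If the permutation has at least $t/3$ fixed points, then $g$ fixes a subspace of dimension at least $n/3$ and \Cref{prop:mult-problem} applies; if it has at most $2t/3$ cycles, one bounds the number of classes in the relevant cosets of $\GL_m(q)^t$ directly using $k(\GL_m(q))^{2t/3} k(S_t)$. A separate Shintani-descent argument is needed for the $t = 2$ totally-singular case, where the characteristic polynomial nearly factors as $gg^*$ or $gg^\dagger$ (\Cref{prop:ff*}). None of this is captured by property $P_t$.

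A second (smaller) gap: your proposal for $\ca C_1$ uses only \Cref{prop:bounded_degree} and \Cref{prop:mult-problem}, but this misses two subcases. First, the stabilizer of a totally singular half-dimensional subspace forces a factorization $\Phi(C) \approx gg^*$ or $gg^\dagger$, which is governed by \Cref{prop:ff*}, not by the divisor-of-degree-$k$ machinery. Second, the case $(G,M)=(\Sp_{2n}(q),\SO_{2n}^\pm(q))$ under \Cref{convention_c1} requires a genuinely different argument: every semisimple element without eigenvalue $1$ fixes exactly one nondegenerate hyperplane of $\F_q^{2n+1}$, and its type is determined by the parity of the number of irreducible factors of the characteristic polynomial, so the relevant tool is \Cref{prop:parity_factors} rather than a divisor-degree bound. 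Your proposal acknowledges the orthogonal parity issues but not this symplectic one.

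Finally, for classes $\ca C_4$--$\ca C_8$ and $\ca S$ you propose to estimate $|M^G|/|G|$ together with centralizer-size lower bounds for generic semisimple $g$ — essentially the element-weighted argument. The paper instead uses the direct observation that the number of $G$-classes meeting $M$ is at most $k(M)$, and the Fulman--Guralnick bound $k(M) \ll q^{(r+1)/2}$ from \cite{fulman2012_conjugacy_classes}. Your route can probably be made to work but requires additional care (conjugacy classes near the boundary of the centralizer-size range), whereas the $k(M)$ bound gives the class-weighted conclusion immediately.
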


We denote by $\derangss(G,\Omega)$ the proportion of semisimple conjugacy classes that consist of derangements (among all semisimple conjugacy classes).
By \Cref{t:fulman_guralnick_bound}, for a group as in \eqref{eq:almost-quasisimple} we have \[\derang(G,M) \gg \derangss(G,M).\]
Hence for \Cref{thm:main-classical-quasisimple} it is sufficient to show that $\derangss(G,M)\gg 1$.

\begin{remark}
\label{r:restriction}
In fact the restriction $n \ge n_0$ can be removed using \Cref{prop:bounded-rank}.
This is immediate if $Z=\Z(G)$ has bounded order, since in this case $k(G)\asymp k(G/Z)$. Hence we may assume that $G'=\SL_n^\pm(q)$. For bounded $n$, almost all classes of $G/Z$ are semisimple, and it is easy to see that the number of semisimple classes of $G/Z$ that lift to less than $|Z|$ classes of $G$ is $\ll (n,q\mp 1)q^{\floor{n/2}}$, and is $\ll 1$ if $G'=\SL_2(q)$. In particular, almost all classes of $G/Z$ lift to $|Z|$ classes of $G$, which implies that $\derang(G, M) \gg 1$.
\end{remark}

Let $(G, M)$ be as in \Cref{thm:main-classical-quasisimple}.
Let $Z = \Z(G)$.
Note that $Z \le M$, for otherwise by maximality we would have $G = M Z$ and hence $G' = M' \le M$.
Hence $M$ corresponds to a maximal subgroup $M / Z$ of the almost simple classical group $G / Z$ not containing its socle $(G/Z)' = G'Z/Z$, and hence Aschbacher's classification applies to $M$.

We point out at once that, for classes $\ca C_4, \ldots, \ca C_8, \ca S$, the bounds of Fulman--Guralnick \cite{fulman2012_conjugacy_classes} are already sufficient to prove \Cref{thm:main-classical-quasisimple}.

\begin{theorem}
\label{thm:main-quasisimple-classes-4+}
Let $G$ be a classical group as in \eqref{eq:almost-quasisimple}.
Let $r$ be the untwisted rank of $G$.
Let $M < G$ be a maximal subgroup not containing $G'$ in class $\ca C_i$ for some $i > 3$ or $\ca S$.
Then $\derang(G, M) = 1 - O(q^{-(r-1)/2})$.
\end{theorem}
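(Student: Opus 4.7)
The plan is to use the elementary observation that a conjugacy class $C$ of $G$ consists of derangements on $G/M$ if and only if $C \cap M = \varnothing$; hence the number of classes which do \emph{not} consist of derangements equals the number of $G$-classes meeting $M$. Since each such class meets $M$ in a nonempty union of $M$-conjugacy classes, and distinct $G$-classes contribute disjoint such unions, this count is bounded above by $k(M)$. Combined with $k(G) \gg q^r$ (from \Cref{t:fulman_guralnick_bound}, together with \Cref{lem:k-for-GL-and-PGL} in the linear and unitary cases) we conclude
\[
    \derang(G, M) \ge 1 - \frac{k(M)}{k(G)} \ge 1 - O\br{\frac{k(M)}{q^r}}.
\]

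It remains to verify that $k(M) \ll q^{(r+1)/2}$ uniformly for $M$ in each of the Aschbacher classes under consideration. This is a case-by-case check, following the analysis of Fulman--Guralnick~\cite{fulman2012_conjugacy_classes}*{Section~5} (in particular \cite{fulman2012_conjugacy_classes}*{Lemma~5.4}). For $\ca C_5$, $M$ is essentially a subfield classical group of level $q_0$ with $q = q_0^e$ and $e \ge 2$ prime, and \Cref{t:fulman_guralnick_bound} gives $k(M) \asymp q_0^r \le q^{r/2}$. For $\ca C_8$, $M$ is a classical group of rank $\le r/2 + O(1)$, and \Cref{t:fulman_guralnick_bound} applies again. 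For $\ca C_6$, $M$ has order polynomial in $q$ of very small degree. For $\ca C_4$ and $\ca C_7$, $M$ sits inside a central product $\GL_{n_1}(q) \circ \cdots \circ \GL_{n_t}(q)$ with $\prod n_i = r + 1$ and $n_i \ge 2$, so by multiplicativity $k(M) \ll q^{\sum n_i}$, and an elementary optimization shows $\sum n_i \le r/2 + O(1)$. For $\ca S$, we invoke Liebeck's order bound on $\ca S$-subgroups of classical groups together with \Cref{t:fulman_guralnick_bound} applied to the socle of $M$ (whose Lie rank is forced to be small relative to $r$).

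The only mildly nontrivial case is $\ca S$, which requires the structural classification or order bound for $\ca S$-subgroups; all other cases reduce to direct applications of \Cref{t:fulman_guralnick_bound} together with routine multiplicativity under (central) products. A final bookkeeping point: the passage from the simple group $G/\Z(G)$ to the group $G$ in \eqref{eq:almost-quasisimple} costs only a bounded multiplicative factor in both $k(G)$ and $k(M)$ (by Gallagher's lemma), so the rate $O(q^{-(r-1)/2})$ is unaffected.
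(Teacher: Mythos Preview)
Your approach is essentially identical to the paper's: bound the number of non-derangement classes by $k(M)$, invoke $k(G) \gg q^r$, and use the Fulman--Guralnick bound $k(M) \ll q^{(r+1)/2}$. The paper simply cites this last bound as a black box (it is \cite{fulman2012_conjugacy_classes}*{Lemma~7.6}, not Lemma~5.4 as you wrote), whereas you sketch the case analysis; your sketches are a little loose in places (e.g., the $\prod n_i = r+1$ relation is specific to type $A$, and the final Gallagher remark is unnecessary since $|\Z(G)|$ is not bounded in the linear/unitary case---but you already handled that correctly via \Cref{lem:k-for-GL-and-PGL}), yet the argument is sound.
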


\begin{proof}
\cite{fulman2012_conjugacy_classes}*{Lemma~7.6} asserts that $k(M) \ll q^{(r+1)/2}$. Given that the number of conjugacy classes of $G$ intersecting $M$ is at most $k(M)$, the statement follows from the fact that $k(G) \gg q^r$ (\Cref{t:fulman_guralnick_bound}).
\end{proof}

We now consider classes $\ca C_1, \ca C_2, \ca C_3$, which constitute the crux of the proof. Our analysis in these cases will use all the results of \Cref{sec:anatomy} on anatomy of polynomials. Recall the bijection $\Phi$ defined in \eqref{eq_linear}--\eqref{eq_orthogonal_odd}\phantomref{eq_unitary}\phantomref{eq_symplectic},
which will be used with no further explanation in the proofs of \Cref{t_c1,t_c2,t_c3}.

\subsection{Class $\ca C_1$}
\begin{theorem}
\label{t_c1}
Let $G$ be as in \eqref{eq:almost-quasisimple}, with $n$ large enough, and let $M$ be a maximal subgroup of $G$ of class $\ca C_1$. Then $\derangss(G, M) \geq \eps_1$ for an absolute constant $\eps_1 >0$.
\end{theorem}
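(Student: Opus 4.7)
The plan is to transfer the problem to polynomials via the dictionary $\Phi$ introduced at the end of \Cref{sec:preliminaries}, and then apply the anatomical results of \Cref{sec:anatomy}. Since $M$ is in class $\ca C_1$, up to $G$-conjugacy it is the stabilizer of a subspace $W$ of the natural module, of some dimension $k$ with $1 \le k \le n-1$; by replacing $W$ by $W^\perp$ we may assume $k \le n/2$ (in the linear case we dualize instead). A semisimple element $g$ has a $G$-conjugate in $M$ iff $g$ preserves some subspace of the same type as $W$, which forces the characteristic polynomial of $g$ to admit a specific factorization. To show $\derangss(G,M) \ge \eps_1$ it therefore suffices to exhibit a positive proportion of the polynomials parametrizing semisimple classes that fail to admit the corresponding factorization.

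In the linear and unitary cases, the condition on $g$ translates into ``$\mathrm{charpoly}(g)$ has a factor of degree $k$'' (of the appropriate kind, with $\dagger$-symmetry in the unitary case when $W$ is nondegenerate or totally isotropic). Choose an absolute constant $K_0$ large enough that \Cref{prop:mult-problem}(1),(3) give $H_a(n,k)/|\ca P_a(n)| \le 1/2$ and $H_a^\dagger(n,k)/|\ca P^\dagger_a(n)| \le 1/2$ for all $k \ge K_0$. For $k < K_0$, \Cref{prop:bounded_degree}(1),(3) yield a proportion bounded below by an absolute positive constant of polynomials with no $(\dagger$-$)$irreducible factor of degree $\le K_0$ whatsoever, hence in particular no factor of degree $k$. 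In the symplectic case with $M$ the stabilizer of a nondegenerate $2k$-space (class $\ca C_1$ or, for $q$ even and $W$ the radical complement, the $\ca C_1$ subgroups $\SO^\pm_{2n}(q)$ from \Cref{convention_c1}), the condition is ``$\mathrm{charpoly}(g)$ has a $*$-symmetric factor of degree $2k$''; \Cref{prop:mult-problem}(2) handles the range $k \ge K_0$, and \Cref{prop:bounded_degree}(2) the range $k < K_0$. When $M$ stabilizes a totally isotropic $k$-space, $\mathrm{charpoly}(g) = h h^* r$ with $r$ $*$-symmetric of degree $2(n-k)$; then $\mathrm{charpoly}(g)$ has a $*$-symmetric factor of degree $2\min(k,n-k)$ (either $hh^*$ or $r$), so \Cref{prop:mult-problem}(2) still applies, while the extreme range where $\min(k,n-k)$ is bounded is covered by \Cref{prop:ff*}(2) (for $n-k$ small) or \Cref{prop:bounded_degree}(2) (for $k$ small).

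The orthogonal case is the main obstacle and the delicate step. Here $\Phi$ is a bijection between semisimple $O$-classes in $S$ and elements of $\ca M(n)$, and we must restrict both to classes inside $\Omega$ and to subsets of $\ca M(n)$ of the correct parity and $\pm 1$-multiplicity profile. Thanks to \Cref{lem:B-restriction}, the number of elements in $\ca M_0(n)$ (for $n$ even) or $\ca N(n)$ (for $n$ odd) is $\asymp |\ca P^*_{(-1)^n}(n)|$, so the anatomy results apply with the correct total count. For the restriction from $S$ to $\Omega$ (in odd characteristic), \Cref{l_equidistribution_cosets} says that among polynomials satisfying condition $(\star_k)$ for some $k \in [1,n/2]\cap 4\mathbf Z$ the semisimple classes split equally between $\Omega$ and $S \smallsetminus \Omega$, so passing to $\Omega$ costs only a factor of $2$ in the proportion; and \Cref{prop:technical_4Z} says that all but a $O(n^{-1/4}\log n)$ proportion of relevant polynomials do satisfy $(\star_k)$ for some such $k$. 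With these tools in place, the polynomial conditions for nondegenerate and totally singular stabilizers in the orthogonal case are treated exactly as in the symplectic case, via \Cref{prop:mult-problem}(2), \Cref{prop:ff*}(2), and \Cref{prop:bounded_degree}(2), to produce an absolute positive proportion of polynomials corresponding to derangement classes.
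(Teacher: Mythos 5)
Your overall plan — translating semisimple classes to polynomials via $\Phi$ and applying Propositions \ref{prop:mult-problem}, \ref{prop:bounded_degree}, and \ref{prop:ff*} — matches the paper's approach, and your treatment of the linear, unitary, and generic symplectic stabilizers is essentially correct. However, you have entirely overlooked the role of Proposition \ref{prop:parity_factors}, which is indispensable in two places, and this leaves genuine gaps.

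First, in the symplectic case with $q$ even you have misclassified $M = \SO^\pm_{2n}(q)$. You describe it as a stabilizer of a nondegenerate subspace of the symplectic module and claim the condition is ``$\mathrm{charpoly}(g)$ has a $*$-symmetric factor of degree $2k$.'' But $\SO^\pm_{2n}(q)$ stabilizes no proper nondegenerate subspace of the $2n$-dimensional symplectic space; per \Cref{convention_c1}, under the identification $\Sp_{2n}(q) \cong \SO_{2n+1}(q)$ it is the stabilizer of a nondegenerate \emph{hyperplane} of fixed type in the $(2n+1)$-dimensional orthogonal module. Every semisimple $g$ without eigenvalue $1$ fixes exactly one such hyperplane, namely $[V,g]$, and the issue is its type. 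The paper shows the type is governed by the \emph{parity} of the number of irreducible factors of $\Phi(g^G)$, so one needs \Cref{prop:parity_factors} to know that a positive proportion of classes land on each side. No factor-of-degree-$k$ argument can separate plus from minus type, so your proposed bound is vacuous here.

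Second, in the orthogonal case you invoke \Cref{lem:B-restriction} to say the anatomy results ``apply with the correct total count,'' but this is too quick: $\ca M_0(n)$ is carved out of $\ca P^*_{(-1)^n}(n)$ by a parity condition on the $*$-symmetric irreducible factors. When $k$ is large, \Cref{prop:mult-problem}(2) together with \Cref{prop:parity_factors} suffices; but when $k$ is bounded, \Cref{prop:bounded_degree}(2) gives only a constant (possibly small) proportion of $\ca P^*_1(n)$ with no low-degree factor, and one must use the ``moreover'' clause of that proposition — itself proved via \Cref{prop:parity_factors} — to guarantee the bound survives conditioning to the parity class $\ca M_0(n)$. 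You also omit the stabilizer of a nondegenerate $1$-space (or nonsingular vector when $q$ is even): for $n$ odd the elements of $\ca B$ have a unique eigenvalue $1$, hence fix a unique nondegenerate hyperplane, and again the question of its type is decided by parity and settled by \Cref{prop:parity_factors}. These omissions are not cosmetic; without the parity-distribution input, the argument does not close in the symplectic $\SO^\pm$ subcase or in the orthogonal case.
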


\begin{proof}
In this proof, we will say that ``a conjugacy class $C$ fixes a $k$-space'' if and only if some (and therefore every) element of $C$ fixes a $k$-space.

\textbf{Case 1:} $\SL_n(q) \leq G \leq \GL_n(q)$. Then $M$ is the stabilizer of a $k$-space with $1\leq k \leq n-1$. A semisimple class $C$ fixes a $k$-space if and only if $\Phi(C)$ has a divisor of degree $k$, and equivalently if and only if $\Phi(C)$ has a divisor of degree $n-k$. Therefore the statement follows from Propositions \ref{prop:mult-problem}(1) (when $k$ is large) and \ref{prop:bounded_degree}(1) (when $k$ is bounded).

\textbf{Case 2:} $\SU_n(q) \leq G \leq \GU_n(q)$. Then $M$ is the stabilizer of a nondegenerate $k$-space for $1\leq k < n/2$, or the stabilizer of a totally singular $k$-space for $1\leq k \leq n/2$.
If a semisimple element of $G$ fixes a totally singular $k$-space, then it fixes a nondegenerate $2k$-space, and equivalently a nondegenerate $(n-2k)$-space; therefore we may assume that either $M$ is the stabilizer of a nondegenerate $k$-space for $1\leq k \le n/2$, or $n$ is even and $M$ is the stabilizer of a totally singular $n/2$-space.

Now note that a class $C$ fixes a nondegenerate $k$-space if and only if $\Phi(C)$ has a $\dagger$-symmetric divisor of degree $k$.
Similarly, $C$ fixes a totally singular $n/2$-space if and only if $\Phi(C)$ has the form $gg^\dagger$. Therefore the statement follows from Propositions \ref{prop:mult-problem}(3), \ref{prop:bounded_degree}(3), and \ref{prop:ff*}(3).

\textbf{Case 3:} $G=\Sp_{2n}(q)$. For $M\neq \SO^\pm_{2n}(q)$ (recall \Cref{convention_c1}), the proof is as in Case 2, using  Propositions \ref{prop:mult-problem}(2), \ref{prop:bounded_degree}(2) and \ref{prop:ff*}(2).

Therefore, assume $q$ is even and $M=\SO^\pm_{2n}(q)$. In accordance with \Cref{convention_c1}, identify $G$ with $\SO_{2n+1}(q)$, with module $V = \F_q^{2n+1}$, so that $M$ is the stabilizer of a nondegenerate hyperplane of $\pm$ type. Observe that if $g\in G$ does not have eigenvalue $1$ on the symplectic module $V / V^\perp$, then $g$ fixes only one nondegenerate hyperplane of the orthogonal module $V$, namely $[V,g]$. From the discussion preceding \Cref{fact} it follows that $g$ fixes a hyperplane of plus (respectively minus) type if and only if the number of irreducible factors of $\Phi(g^G)$ is even (respectively odd).
Therefore the statement follows from Proposition \ref{prop:parity_factors}.

\textbf{Case 4:} $G=\Omega=\Omega^\vareps_{n}(q)$ with $\vareps \in \{+,-,\circ\}$. Let $S=\SO^\varepsilon_{2n}(q)$ and $O=\Or^\varepsilon_{n}(q)$.
Rather than count semisimple conjugacy classes in $\Omega$ it suffices to count semisimple $O$-classes in $\Omega$, since $[O:\Omega] \le 4$.
By \Cref{prop:technical_4Z,l_equidistribution_cosets,lem:B-restriction}\phantomref{l_equidistribution_cosets}, it is equally sufficient to count semisimple $O$-classes in $S$.
Again by \Cref{lem:B-restriction}, it suffices to count classes $C$ in $\ca B = \ca B(n)$ (see \eqref{eq_orthogonal_even} and \eqref{eq_orthogonal_odd}).


Assume first that $M$ is the stabilizer of a nondegenerate space of dimension $k \in [2, n/2]$ (of any type).
Then $C$ fixes a nondegenerate $k$-space if and only if $\Phi(C)$ has a $*$-symmetric divisor of degree $k$.
If $k$ is large we conclude from Propositions~\ref{prop:parity_factors} and \ref{prop:mult-problem}(2).
If $k$ is bounded and $n$ is even we find from Proposition~\ref{prop:parity_factors} and \ref{prop:bounded_degree}(2) that a constant proportion of elements of $\ca M_0(n)$ have no divisor of degree at most $k$,
while if $k$ is bounded and $n$ is odd we find that a constant proportion of elements of $\ca N(n)$ have no divisor of degree at most $k$ other than $X-1$.

Assume now that $M$ is the stabilizer of a nondegenerate $1$-space, or that $q$ is even and $M$ is the stabilizer of a nonsingular vector.
If $C\cap M\neq \varnothing$, then $C$ has $\pm 1$ as eigenvalues. If $n$ is even this is impossible since $C\in \ca B$. If $n$ is odd, then $M$ is the stabilizer of a nondegenerate hyperplane of $\pm$ type. Since $C\in \ca B$, the elements of $C$ have an eigenvalue $1$ of multiplicity one and has no eigenvalue $-1$. In particular, the elements of $C$ fix only one nondegenerate hyperplane,
which is of plus (respectively minus) type if and only if the number of irreducible factors of degree at least two of $\Phi(C)$ is even (respectively odd). We conclude then by \Cref{prop:parity_factors}.

Assume finally that $M$ is the stabilizer of a totally singular space. As in Case 2, we need only to consider the case where it has dimension $n/2$. Then the characteristic polynomial decomposes as $gg^*$, and we conclude by \Cref{prop:mult-problem}(2).
\end{proof}

\begin{remark}
\label{r:tends_to_one}
It follows from the previous proof that, if $V$ is the natural module for $G$,  and $M$ is the stabilizer of a $k$-space with $k\leq (\dim V)/2$, and  $(G,M)\neq (\Sp_{2n}(q), \SO^\pm_{2n}(q))$, then $\derangss(G,\Omega) = 1 - O(k^{-\delta/3})$, where $\delta$ is as in \Cref{prop:mult-problem}.

This is straightforward in all cases, except when $M$ is the stabilizer of a totally singular $k$-space, where the argument needs to be slightly adjusted. Let us assume $\SU_n(q) \le G \le \GU_n(q)$; the other cases are analogous. A class $C$ of $G$ fixes a totally singular $k$-space if and only if $\Phi(C) = gg^\dagger h$, where $\deg(g) = k$. Let us bound the probability of this event. If $2k \le n/2$, then \Cref{prop:mult-problem}(3) gives a bound $\ll k^{-\delta}(1+\log k)^{-1/2}\ll k^{-\delta/3}$. If $n^{1/3} \le n-2k \le n/2$, then \Cref{prop:mult-problem}(3) gives a bound $\ll n^{-\delta/3} (\log n)^{-1/2} \ll k^{-\delta/3}$, since $\Phi(C)$ has a $\dagger$-symmetric divisor $h$ of degree $n-2k$. Finally, if $n-2k \le n^{1/3}$, then  \Cref{prop:ff*}(3) gives a bound $\ll n^{-1/6} \ll k^{-\delta/3}$.
\end{remark}

\subsection{Class $\ca C_2$}

\begin{theorem}
\label{t_c2}
Let $G$ be as in \eqref{eq:almost-quasisimple}, and let $M$ be a maximal subgroup of $G$ of class $\ca C_2$. Then, $\derangss(G,M) = 1 - O(n^{-\delta}(\log n)^{-1/2})$.
\end{theorem}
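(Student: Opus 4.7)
The plan is to exploit the rigid multiplicative structure of the characteristic polynomial $\Phi(C)$ when $C$ meets a class $\ca C_2$ subgroup. Concretely, $M$ stabilizes a direct-sum decomposition $V = V_1 \oplus \cdots \oplus V_t$ with $\dim V_i = m$ and $n = mt$, with analogous setups in the unitary, symplectic, and orthogonal cases (including the totally singular decomposition variants for $\SU_n$, $\Sp_{2n}$, and $\Or^+_{2n}$, where $t = 2$ and the two summands are paired by the form). Every $g \in M$ induces a permutation $\sigma \in S_t$ of $\{V_1, \ldots, V_t\}$, and if $\sigma$ has cycle lengths $\ell_1, \ldots, \ell_s$ then, writing $p_j \in \ca P(m)$ for the characteristic polynomial of $g^{\ell_j}$ restricted to a representative summand in the $j$-th cycle, a block-matrix calculation yields
\[
    \Phi(C) = \prod_{j=1}^s p_j(X^{\ell_j}).
\]
In the classical cases one additionally verifies the appropriate palindromic symmetry: each $p_j$ is individually $*$- (or $\dagger$-)symmetric, or else factors occur in conjugate pairs so that $\Phi(C)$ lies in $\ca P^*$ (resp.\ $\ca P^\dagger$). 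For the totally singular variants the polynomial factors instead as $gg^*$ or $gg^\dagger$.

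I would then case-split on the cycle type of $\sigma$. If $\sigma$ is a single $t$-cycle, then $\Phi(C) = p(X^t)$ with $\deg p = m$, contributing at most $q^m \le q^{n/2}$ polynomials; this is an exponentially small proportion of $|\ca P_a(n)| \asymp q^{n-1}$. If $\sigma$ has a cycle of length $\ell_{\max} > t/2$ but $s \ge 2$, then $\Phi(C)$ has the sparse factor $p(X^{\ell_{\max}})$ drawn from only $q^m$ polynomials while the complementary factor of degree $n-m\ell_{\max} < n/2$ contributes at most $q^{n-m\ell_{\max}}$ choices; summing over $\ell_{\max}$ gives a geometric series bounded by $\ll q^{n-mt/2} = q^{n/2}$, again exponentially small. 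In the remaining ``balanced'' case, all cycle lengths are at most $t/2$, and a greedy argument (take the largest cycle by itself when its length exceeds $t/4$, otherwise accumulate cycles in decreasing order until the running sum crosses $t/4$) produces a subset $S \subseteq \{1, \ldots, s\}$ with $u = \sum_{j \in S} \ell_j \in [\lceil t/4 \rceil, \lfloor t/2 \rfloor]$. Thus $\Phi(C)$ has a divisor $\prod_{j \in S} p_j(X^{\ell_j})$ of degree $k = mu \in [\lceil n/4 \rceil, \lfloor n/2 \rfloor]$, and invoking \Cref{prop:mult-problem}(1) (and parts (2), (3) and \Cref{prop:ff*} for the other classical types) yields
\[
    \ll q^{n-1}\, k^{-\delta} (\log k)^{-1/2} \ll q^{n-1}\, n^{-\delta} (\log n)^{-1/2}.
\]

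The main obstacle is the balancing step: a naive union bound over the possible divisor degrees $k$ in the balanced case would cost a spurious factor of $n$ and destroy the bound. I would circumvent this by assigning each cycle structure a single \emph{canonical} $k$ produced deterministically by the greedy algorithm above, so that the counting is organized by cycle structure (of which there are only $p(t)$ many, subexponential in $n$) rather than by divisor degree. The unitary, symplectic, and orthogonal cases follow the same outline using the palindromic analogues from \Cref{sec:anatomy}; in the orthogonal case one further restricts to semisimple $O$-classes in $\ca B(n)$ as in the proof of \Cref{t_c1}, and uses \Cref{lem:B-restriction} to reduce to $\ca M_0(n)$ or $\ca N(n)$, while the totally singular decomposition variants are handled directly via \Cref{prop:ff*}.
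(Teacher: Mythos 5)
Your decomposition into ``single $t$-cycle,'' ``one long cycle plus remainder,'' and ``balanced'' is a reasonable start, and the first two cases are fine (though your geometric series estimate $\ll q^{n/2}$ in the second case is off by a factor $q^m$; it is actually $\ll q^{n/2+m}$, still harmless since $t\ge 3$ there). The fatal problem is the balanced case. Assigning a canonical $k(\lambda)\in[n/4,n/2]$ to each cycle structure $\lambda$ does not repair the union bound: you still end up summing a quantity of size $\asymp q^{n-1}n^{-\delta}(\log n)^{-1/2}$ over the cycle structures, and there are $p(t)$ of them, where $p$ is the partition function. Since $t$ can be as large as $n$ (when $m=1$), $p(t)$ can be $\gg n^{100}$, and ``subexponential in $n$'' is irrelevant; you need $O(1)$ terms, not $e^{c\sqrt n}$ terms. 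Nor does it help that distinct $\lambda$'s can collide on the same $k$: grouping by $k$ just recovers the naive sum over $\Theta(n)$ divisor degrees you were trying to avoid. So as stated the balanced case is not controlled.

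The paper uses a cleaner dichotomy that sidesteps the whole issue, splitting not by cycle type but by the \emph{number} of cycles $s$ of the induced permutation. If $s\le 2t/3$, one doesn't use divisors at all: the number of $N$-classes in a coset $N\pi$ of the wreath product $N\rtimes S_t$ ($N=\GL_m(q)^t$) is $k(\GL_m(q))^s\le q^{ms}\le q^{2n/3}$, and summing over the $\le p(t)\ll O(1)^{\sqrt t}$ conjugacy classes of such $\pi$ is absorbed because $q^{2n/3}O(1)^{\sqrt t}$ is exponentially smaller than $q^{n-1}$. (Your balanced case with, say, ten cycles of length $t/10$ falls here and is already handled without any divisor analysis.) If $s>2t/3$, then $\pi$ has at least $t/3$ \emph{fixed} blocks, so one can pick exactly $\lceil t/3\rceil$ of them; this gives a $g$-invariant subspace of dimension exactly $m\lceil t/3\rceil$, hence a divisor of $\Phi(C)$ of the single fixed degree $k=m\lceil t/3\rceil\in[n/3,n/2]$, and \Cref{prop:mult-problem} applies with that one $k$. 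No union over $k$ ever occurs. You should also note that for the totally singular decompositions (where $M\cong\GL_{n/2}(q^2).2$, $\GL_n(q).2$, etc.), \Cref{prop:ff*} only controls the classes meeting the identity component; the classes meeting the nontrivial coset are bounded separately via Shintani descent (\Cref{l_shintani_descent,l_shintani_analogue}) plus \cite{fulman2018extension_field}*{Lemma~4.9}, which your sketch omits.
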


\begin{proof}
\textbf{Case 1:} $\SL_n(q) \leq G \leq \GL_n(q)$. Then $M$ is the stabilizer of a direct sum decomposition $\F_q^n=W_1\oplus \cdots \oplus W_t$, with $W_i \cong \F_q^m$ for every $i$, $m<n$ and $mt=n$. In particular, $M=G\cap L$ where $L = \GL_m(q) \wr S_t$. We will bound the number of conjugacy classes of $G$ that intersect $L$.

Set $N=\GL_m(q)^t$. Let $A \subset S_t$ be a set of conjugacy class representatives for the elements with at most $2t/3$ cycles.
Let $B \subset S_t$ be a set of conjugacy class representatives for the remaining elements.

By the description of the conjugacy classes in the wreath product $L$, the number of $N$-classes in $N\pi$ for any $\pi \in S_t$ is $k(\GL_m(q))^s$, where $s$ is the number of cycles of $\pi$. Using the bounds $k(\GL_m(q)) \le q^m$ (\cite{fulman2012_conjugacy_classes}*{Section 3.2}) and $|A| \le k(S_t) \le O(1)^{\sqrt t}$, it follows that the number of $\GL_n(q)$-classes intersecting $NA$ is at most
\[
    k(\GL_m(q))^{2t/3} |A| \le q^{2n/3} O(1)^{\sqrt t}.
\]
Multiplying by a factor of $q-1$, we get an upper bound for the number of $G$-classes intersecting $NA$ which is much smaller than the number of semisimple classes of $G$, which is at least $q^{n-1}$.

Next consider the $G$-classes intersecting $NB$.
Each element $b \in B$ has more than $2t/3$ cycles and therefore at least $t/3$ fixed points.
In particular each $b \in B$ fixes a set of size $s = \ceil{t/3}$.
Therefore if a $G$-class $C$ intersects $NB$, then each element of $C$ fixes a space of dimension $ms \ge n/3$. For a semisimple class, this happens with probability $\ll n^{-\delta}(\log n)^{-1/2}$ by \Cref{prop:mult-problem}(1). This concludes the proof in this case.

\textbf{Case 2:} $\SU_n(q) \leq G \leq \GU_n(q)$. Assume first $V=V_1\perp \cdots \perp V_t$ with $V_i$ nondegenerate of dimension $m$, so $n=mt$ and $M=L\cap G$ with $L=\GU_{m}(q)\wr S_t$. Then a similar proof as in Case 1 holds, using Proposition \ref{prop:mult-problem}(3); cf. \cite{fulman2018extension_field}*{Theorem~4.4}.

Assume then $V=V_1\oplus V_2$ with $V_i$ totally singular, so $M= L\cap G$ where $L=\GL_{n/2}(q^2) \rtimes C_2$. More precisely, letting $X=\GL_{n/2}(\overline{\F_q})$,  $\varphi_q$ be the morphism which raises each matrix entry to its $q$-th power, and $\gamma$ be the inverse-transpose map, we have $L\cong X_{\varphi_q^2}\rtimes \gen{\gamma\varphi_q}$.
In particular, by Lemma \ref{l_shintani_descent}, the number of $X_{\varphi_q^2}$-classes in the coset $X_{\varphi_q^2}\gamma\varphi_q$ is equal to the number of conjugacy classes of $X_{\gamma\varphi_q}=\GU_{n/2}(q)$, which is $\asymp q^{n/2}$. This is small compared to $k(G)$, since $k(G) \gg q^{n-1}$ by \Cref{t:fulman_guralnick_bound}.

If, on the other hand, a semisimple element $g$ fixes both $V_1$ and $V_2$, then $\Phi(g^G)=ff^\dagger$, and this happens with probability $\ll n^{-1/2}$ by Proposition \ref{prop:ff*}(3).

\textbf{Case 3:} $G=\Sp_{2n}(q)$. Assume first that $V=V_1\perp \cdots \perp V_t$ with $V_i$ nondegenerate of dimension $2m$, so $n=mt$ and $M=\Sp_{2m}(q)\wr S_t$. Then exactly the same argument given in Case 1 works, using Proposition \ref{prop:mult-problem}(2).

Assume then $V=V_1\oplus V_2$ with $V_i$ totally singular, so $M=\GL_n(q) \rtimes C_2$. We have $M\cong \GL_n(q) \rtimes \gen{x}$, where $x$ is the inverse-transpose map. By Lemma \ref{l_shintani_analogue}, the number of classes in the nontrivial coset is equal to the number of classes $C$ of $\GL_n(q)$ such that $C=C^{-1}$. The number of such classes is $\ll q^{n/2}$ by \cite{fulman2018extension_field}*{Lemma~4.9}.

On the other hand, if $g$ semisimple fixes both $V_1$ and $V_2$ then $\Phi(g^G) = ff^*$, which happens with probability $\ll n^{-1/2}$ by Proposition \ref{prop:ff*}(2).

\textbf{Case 4:} $G=\Omega=\Omega^\varepsilon_{n}(q)$ with $\varepsilon\in \{+,-,\circ\}$. Put $S=\SO^\varepsilon_{n}(q)$ and $O=\Or^\varepsilon_{n}(q)$. Given \Cref{prop:technical_4Z,l_equidistribution_cosets,lem:B-restriction}, it is sufficient to count semisimple $O$-classes in $S$. By \Cref{lem:B-restriction}, we may just count polynomials in $\ca P^*_{(-1)^n}(n)$, and the result for the $O$-classes in $S$ will follow.
Given this reduction, exactly the same argument given in Case 3 applies.
\end{proof}

\subsection{Class $\ca C_3$}

\begin{theorem}
\label{t_c3}
Let $G$ be as in \eqref{eq:almost-quasisimple}, and let $M$ be a maximal subgroup of $G$ of class $\ca C_3$. Then $\derangss(G,M) = 1 - O(n^{-1/4}\log n)$.
\end{theorem}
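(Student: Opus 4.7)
The plan is to mirror the structure of the proof of \Cref{t_c2}, treating each of the four families (linear, unitary, symplectic, orthogonal) separately. The key observation is that each $\ca C_3$ subgroup $M$ of the relevant classical group $G$ has the form $H(q^r) \cdot r$, where $r$ is a prime dividing $n$ (or $2n$) and $H$ is a classical group over $\F_{q^r}$ of compatible type. A semisimple element of $H(q^r)$ viewed in $G$ corresponds, via the bijection $\Phi$, to a characteristic polynomial satisfying property $P_r$ (each irreducible factor has degree or multiplicity divisible by $r$). For the nontrivial cosets $H(q^r)\psi \subset M$ (where $\psi$ generates the field-automorphism action), \Cref{l_shintani_descent} shows that the number of $G$-classes intersecting them is bounded by the number of classes of the smaller group $H(q)$ (roughly $q^{n/r}$), which is negligible compared to $k(G)$.

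In the linear case I would apply \Cref{prop:p_r}(1), giving $\ll q^{n-1} n^{-1+1/r}$ semisimple classes with a $P_r$ polynomial, and summing over primes $r \mid n$ gives $O(n^{-1/2})$. In the unitary case one uses \Cref{prop:p_r}(3) analogously, with the additional twisted subgroups (for example $\GL_{n/2}(q^2)\cdot 2$) handled via Shintani descent as in the proof of \Cref{t_c2}, Case~2. In the symplectic case, the dominant contribution comes from $r=2$: the subgroup $\Sp_n(q^2)\cdot 2$ inside $\Sp_{2n}(q)$ produces $\ll |\ca P^*_a(n)| n^{-1/4}$ classes by \Cref{prop:p_r}(2), matching the target bound, while odd $r \ge 3$ give the stronger $n^{-2/3}$. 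Twisted $\ca C_3$ subgroups such as $\GU_n(q)\cdot 2 \le \Sp_{2n}(q)$ are handled by characterizing their semisimple elements via $*$-symmetric characteristic polynomials with $P_2$ and applying the same proposition, with the nontrivial coset bounded by Shintani descent.

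In the orthogonal case $G = \Omega^\varepsilon_n(q)$, I would proceed exactly as in Case~4 of \Cref{t_c1}: pass from $\Omega$-classes to $O$-classes in $S = \SO^\varepsilon_n(q)$, use \Cref{lem:B-restriction} to restrict to $\ca M_0(n)$ or $\ca N(n)$, and then invoke \Cref{l_equidistribution_cosets} together with \Cref{prop:technical_4Z} to reduce to polynomials in $\ca A(n)$ up to an error of $O(n^{-1/4}\log n)$. Among those polynomials, the contribution from $P_r$ is controlled by \Cref{prop:p_r}(2), contributing $O(n^{-1/4})$ at $r=2$ and $O(n^{-2/3})$ at odd $r$; the twisted $\ca C_3$ subgroups (of extension-field type over a quadratic extension, or orthogonal-unitary type) are treated exactly as in the symplectic case. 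Combining these contributions produces the claimed bound $1 - O(n^{-1/4}\log n)$, and the logarithmic factor is inherited from \Cref{prop:technical_4Z}.

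The main obstacle will be carefully enumerating the ``twisted'' $\ca C_3$ subgroups for each classical type, verifying the $P_r$-type characterization of their semisimple elements in each case via the characteristic polynomial, and handling the Shintani-descent bounds for the various nontrivial cosets uniformly across types. The orthogonal case additionally requires care in passing between $\Omega$, $S$, and $O$, which is precisely what produces the logarithmic factor in the final estimate.
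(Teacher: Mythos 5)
Your proposal follows the paper's proof closely and correctly: Shintani descent to bound the number of $G$-classes meeting the nontrivial cosets of the extension-field subgroup, property $P_r$ via the characteristic-polynomial correspondence (\Cref{prop:p_r}) for the base subgroup, special treatment of the ``twisted'' extension-field subgroups such as $\GU_n(q){\cdot}2 \le \Sp_{2n}(q)$, and the $\Omega$-to-$O$ reduction via \Cref{lem:B-restriction}, \Cref{l_equidistribution_cosets}, and \Cref{prop:technical_4Z} in the orthogonal case, with the $\log n$ factor correctly traced to \Cref{prop:technical_4Z}. One small slip worth flagging: $\GL_{n/2}(q^2){\cdot}2 \le \GU_n(q)$ is a $\ca C_2$ subgroup (stabilizer of a pair of complementary totally singular subspaces, already treated in the proof of \Cref{t_c2}), not a $\ca C_3$ subgroup --- the only $\ca C_3$ subgroups of $\GU_n(q)$ are $\GU_{n/r}(q^r){\cdot}r$ with $r$ an odd prime, so the unitary case has no twisted extension-field subgroups and the paper accordingly handles it with \Cref{prop:p_r}(3) alone.
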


\begin{proof}
Put $K=\overline{\F_q}$. Throughout the proof, $\varphi_q$ denotes the endomorphism of $\GL_n(K)$ obtained by raising each matrix entry to its $q$-th power. Whenever we write $G\rtimes\gen{\varphi_q}$ where $G$ is a finite group, it is implicit that we mean the restriction of $\varphi_q$ to $G$.

\textbf{Case 1:} $\SL_n(q) \leq G \leq \GL_n(q)$. Assume that $M=G\cap L$, where $L=\GL_{n/r}(q^r) : r$ and $r$ is prime. Putting $X=\GL_{n/r}(K)$, we may write $L=X_{\varphi_q^r}\rtimes\gen{\varphi_q}$. Then, by Lemma \ref{l_shintani_descent}, the number of $X_{\varphi_q^r}$-classes in $L\sm X_{\varphi_q^r}$ is equal to $(r-1) k(X_{\varphi_q})\ll q^{n/2}$. Therefore, the number of $G$-classes intersecting $L\sm X_{\varphi_q^r}$ is $\ll q^{n/2+1}$, which is small.

Now, if a semisimple element of $G$ is contained in $X_{\varphi_q^r}$, then $\Phi(g^G)$ satisfies property $P_r$, as defined before Proposition \ref{prop:p_r}. In particular, by Proposition \ref{prop:p_r}(1) this happens with probability $\ll n^{-1/4}$, which concludes the proof in this case.

\textbf{Case 2:} $\SU_n(q) \leq G \leq \GU_n(q)$. Assume $M=L \cap G$, where $L=\GU_{n/r}(q^r) : r$ with $r$ odd prime. Putting $X=\GL_{n/r}(K)$ and letting $\gamma$ be the inverse-transpose map, we may write $L=X_{\gamma \varphi^r_q}\rtimes\gen{\varphi^2_q}$.
Since $r$ is odd, we have $L=X_{\gamma \varphi^r_q}\rtimes\gen{\gamma\varphi_q}$.
Thus by \Cref{l_shintani_descent}, the number of $X_{\gamma \varphi_q^r}$-classes in $L \sm X_{\gamma\varphi_q^r}$ is $(r-1) k(X_{\gamma \varphi_q})= (r-1) k(\GU_{n/r}(q))\ll q^{n/r}$, which is small.

In order to bound the number of semisimple $G$-classes intersecting $X_{\gamma\varphi_q^r} = \GU_{n/r}(q^r)$, we argue as in Case 1, using \Cref{prop:p_r}(3).

\textbf{Case 3:} $G=\Sp_{2n}(q)$. The cases to consider are $M=\Sp_{2n/r}(q^r):r$ with $2n/r$ even, and $M=\GU_{n}(q):2$. The first case is treated as in Case 1, using \Cref{l_shintani_descent} and \Cref{prop:p_r}(2).

Consider now the second case. Denoting by $X=\GL_n(K)$ and by $\gamma$ the inverse-transpose map, we may write $M=X_{\gamma\varphi_q}\rtimes \gen{\varphi_q}$. By \Cref{l_shintani_descent,l_shintani_analogue}, the number of $X_{\gamma\varphi_q}$-classes in the nontrivial coset is equal to the number of classes $C$ of $X_{\varphi_q}$ which are stable under $\gamma$, that is, such that $C=C^{-1}$. The number of such classes is $\ll q^{n/2}$ (see \cite{fulman2018extension_field}*{Lemma~4.9}). In order to bound the number of $G$-classes intersecting $\GU_n(q)$, use \Cref{prop:p_r}(2).


\textbf{Case 4:} $G=\Omega=\Omega^\varepsilon_n(q)$ with $\varepsilon \in \{+,-,\circ\}$. Put $S=\SO^\varepsilon_{n}(q)$ and $O=\Or^\varepsilon_{n}(q)$. By the same argument given in the proof of Case 4 in Theorem \ref{t_c2}, it is sufficient to count among polynomials in $\ca P^*_{(-1)^n}(n)$, up to paying an error $O(n^{-1/4}\log n)$ given by \Cref{prop:technical_4Z}.

Assume first $M=G\cap (\GU_{n/2}(q): 2)$. Then the same argument given in Case 3 applies.

Assume then $M=G \cap L$, with $L=\Or^{\varepsilon'}_{n/r}(q^r)\mathbin{.}r$ with either $n/r$ even and $\varepsilon' = \varepsilon$, or $n/r$ odd and $\varepsilon' = \circ$. Write $X=\SO_{n/r}(K)$ if $q$ is odd, and $X=\Omega_{n/r}(K)$ if $q$ is even. Let $\gamma\in \Or_{n/r}(K)$ be a reflection with $[\varphi_q,\gamma]=1$. We have $\Or^{\varepsilon'}_{n/r}(q^r) \cong X_{\gamma'\varphi^r_q} \rtimes{\gen \gamma}$, where $\gamma'=1$ if $\varepsilon' = +$ or $\circ$, and $\gamma'=\gamma$ if $\varepsilon' =-$. Moreover, $L=\gen{X_{\gamma'\varphi^r_q}, \gamma,\varphi_q}$. The coset $\Or^{\varepsilon'}_{n/r}(q^r)\varphi_q$ is the union of $X_{\gamma'\varphi_q^r}\gamma\varphi_q$ and $X_{\gamma'\varphi_q^r}\varphi_q$. By \Cref{l_shintani_descent} the number of $X_{\gamma'\varphi_q^r}$-classes in the two cosets is at most $k(X_{\gamma\varphi_q})$ and $k(X_{\varphi_q})$, respectively, which are $\ll q^{n/(2r)}$. 
In order to bound the $G$-classes intersecting $\Or^{\varepsilon'}_{n/r}(q^r)$, we use \Cref{prop:p_r}(2).
\end{proof}

\subsection{Finishing the proof}

The proof of \Cref{thm:main-classical-quasisimple} now follows immediately.

\begin{proof}[Proof \Cref{thm:main-classical-quasisimple}]
Let $G$ be a group as in \eqref{eq:almost-quasisimple}, and let $M$ be a maximal subgroup of $G$ not containing $G'$. In particular, $Z=\Z(G)\leq M$, hence $M$ corresponds to a maximal subgroup of the almost simple group $G/Z$ not containing the socle $G'Z / Z$, so it belongs, in Aschbacher's description, to class $\ca C_i$ for some $i\in \{1, \dots, 8\}$ or $\ca S$.
We may assume that the rank of $G'$ is large.
Now the $i = 1, 2, 3$ cases follow from \Cref{t_c1,t_c2,t_c3}\phantomref{t_c1}\phantomref{t_c3} together with \Cref{t:fulman_guralnick_bound},
while the other cases follow from \Cref{thm:main-quasisimple-classes-4+}.
\end{proof}

Having now finished the proof of \Cref{thm:main-classical-quasisimple}, we can now deduce \Cref{t_main} in the diagonally almost simple case \eqref{eq:diag-almost-simple}.
In particular this completes the proof of \Cref{thm:main-simple-case}.

\begin{proposition}
    \label{prop:diag-almost-simple-case}
    \Cref{t_main} holds in the case \eqref{eq:diag-almost-simple}.
\end{proposition}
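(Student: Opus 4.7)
The plan is to split into two cases based on the socle $S$ of $G$, handling the generic case by passing to $S$, and handling the troublesome $\PSL^\pm_n$ case by lifting to the general linear group.

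First, suppose $S$ is not of type $\PSL^\pm_n(q)$. Then $[\Inndiag(S):S] \le 4$, so $|G:S| \le 4$. Primitivity and faithfulness force $S$ to act transitively on $\Omega$: since $S \nsgp G$ acts nontrivially (else $S$ would lie in the kernel of the $G$-action), its orbits form a $G$-invariant partition of $\Omega$, which by primitivity must be trivial. \Cref{thm:main-simple-case} gives $\derang(S, \Omega) \ge \eps$, and then \Cref{lem:passing-to-a-subgroup} yields $\derang(G, \Omega) \ge \eps/16$.

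Now assume $S = \PSL^\pm_n(q)$, and lift $G$ to $\tilde G = \pi^{-1}(G)$, where $\pi \colon \GL^\pm_n(q) \to \PGL^\pm_n(q)$ is the natural projection; then $\SL^\pm_n(q) \le \tilde G \le \GL^\pm_n(q)$ and $\tilde G'$ is quasisimple. The $G$-action on $\Omega$ lifts via $\pi$ to a $\tilde G$-action with point stabilizer $\tilde M = \pi^{-1}(G_\alpha)$. Since $\tilde G'$ is nonabelian, it cannot act trivially, so $\tilde M$ does not contain $\tilde G'$, and the extension of \Cref{thm:main-classical-quasisimple} to the intervals $\SL^\pm_n(q) \le \tilde G \le \GL^\pm_n(q)$ (noted just after \Cref{t_main}) gives $\derang(\tilde G, \Omega) \ge \eps$.

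To transfer this bound to $G$, put $Z = Z(\tilde G) = \ker \pi$. Because $Z$ acts trivially on $\Omega$ (being contained in $\tilde M$), the property of being a derangement is invariant under multiplication by central elements; hence the $Z$-action on $\tilde G$-conjugacy classes (by left multiplication) partitions them into orbits that are uniformly derangement or uniformly non-derangement, and these $Z$-orbits biject with the $G$-conjugacy classes. Let $Y$ denote the set of $\tilde G$-classes whose $Z$-orbit has size strictly less than $|Z|$. Using the semisimple parameterizations \eqref{eq_linear} and \eqref{eq_unitary}, a semisimple class with characteristic polynomial $f$ is fixed by $1 \ne z \in Z$ precisely when $z^n f(X/z) = f(X)$, which forces $a_i = 0$ whenever $z^{n-i} \ne 1$; summing over $z$ and combining with the non-semisimple bound from \Cref{t:fulman_guralnick_bound} yields $|Y| \ll |Z|\, q^{n/2}$. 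By \Cref{lem:k-for-GL-and-PGL}, $k(\tilde G) \asymp q^n/t$ with $|Z| \le q-1$, so both $|Y|$ and $|Z||Y|$ are $o(k(\tilde G))$ for $n$ large. A direct count then gives
\[
\derang(G, \Omega) \ge \frac{\eps\, k(\tilde G) - |Y|}{k(\tilde G) + |Z||Y|} \ge \eps/2,
\]
and the remaining bounded-rank cases are absorbed into \Cref{prop:bounded-rank}. The main technical point is the estimate on $|Y|$, which is an essentially elementary polynomial computation once one recognizes that $Z$-stable semisimple classes correspond to sparse characteristic polynomials.
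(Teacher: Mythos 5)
Your proof has two substantial gaps and, where it does work, is considerably more laborious than necessary.

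\textbf{Case 1 is circular.} You invoke \Cref{thm:main-simple-case} to conclude $\derang(S,\Omega)\ge\eps$ for $S$ simple and transitive. But \Cref{thm:main-simple-case} is precisely what the present proposition (in the case $G=S$) is supposed to establish --- the paper explicitly states that the proof of \Cref{thm:main-simple-case} is completed only once \Cref{prop:diag-almost-simple-case} is proved. At this stage you may cite \Cref{prop:bounded-rank} and \Cref{thm:main-classical-quasisimple}, but both require the point stabilizer to be maximal (i.e.\ a primitive action). After reducing from $G$ to $S$ you only know that $S$ is transitive; if $S$ acts imprimitively you must first pass to a maximal system of blocks $\Delta$ (on which $S$ is primitive, using monolithicity of $\Soc(G)$ in $S$) and use $\derang(S,\Omega)\ge\derang(S,\Delta)$. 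The paper does exactly this. Once on $(S,\Delta)$, one must further lift $S$ to the quasisimple cover $\Gamma\in\{\Sp_{2n}(q),\Omega_n^\vareps(q),\dots\}$ to apply \Cref{thm:main-classical-quasisimple}; quoting \Cref{thm:main-simple-case} as a black box skips all of this.

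\textbf{Case 2 has a gap in the $|Y|$ estimate.} You correctly reduce the transfer from $\tilde G$ to $G$ to bounding $|Y|$, the set of $\tilde G$-classes with $Z$-orbit of size $<|Z|$. Your polynomial computation handles only the \emph{semisimple} classes in $Y$. For non-semisimple classes you then say ``combining with the non-semisimple bound from \Cref{t:fulman_guralnick_bound},'' but that theorem bounds the \emph{total} number of non-semisimple classes of $\tilde G$ by $O(q^{n-2}|\tilde G{:}\SL_n^\pm(q)|)$, which is vastly larger than your claimed $|Z|q^{n/2}$ and is \emph{not} $o(k(\tilde G)/|Z|)$ in general (e.g.\ when $|Z|\asymp q$). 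Crucially, not every non-semisimple class lies in $Y$, and it is exactly the non-semisimple classes fixed by some $z\neq 1$ that you need to count --- a point the characteristic-polynomial argument alone does not control, since a single sparse characteristic polynomial can carry many Jordan types. The paper avoids this entirely: from \Cref{lem:k-for-GL-and-PGL} it reads off $k(G)\ll k(\Gamma)/|Z|$, which is all that is needed, and then the $\le|Z|$-to-$1$ map of classes does the rest. Your $Z$-orbit bookkeeping, if completed, would reprove part of \Cref{lem:k-for-GL-and-PGL} by hand; as written it is both redundant and incomplete.

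At the level of strategy you and the paper are aligned --- pass to a subgroup of bounded index, lift to a quasisimple cover, apply \Cref{thm:main-classical-quasisimple}, transfer via a counting argument --- but the paper's route (reduce uniformly to a normal subgroup $S$ of index $\le 4$ which is either simple or in $[\PSL,\PGL]$; pass to a maximal block system; lift; and use the soft bound on $k(G)$ from Fulman--Guralnick) handles both of your cases at once and sidesteps the two gaps above.
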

\begin{proof}
If the rank is bounded then we may apply \Cref{prop:bounded-rank}.
Hence we may assume $G$ is a classical almost simple group
of sufficiently large rank.

There exists a normal subgroup $S$ of $G$ with $\Soc(G) \le S \le G$ and $|G:S|\leq 4$ where $S$ is either simple or $\PSL_n^\pm(q) \leq S \leq \PGL_n^\pm(q)$.
Let $\Delta$ be a maximal system of imprimitivity for $S$.
Since $\Soc(G)$ is transitive on $\Omega$ and monolithic in $S$, it follows that $S$ acts primitively and faithfully on $\Delta$.
Hence, using \Cref{lem:passing-to-a-subgroup} and the fact that $\delta(S,\Omega)\ge \delta(S,\Delta)$, we may replace $(G, \Omega)$ with $(S, \Delta)$.
Thus we may assume $G$ is either simple or in an interval $\PSL_n^\pm(q) \le G \le \PGL_n^\pm(q)$.

In the linear and unitary cases define $\Gamma$ to be the preimage of $G$ in $\GL_n^\pm(q)$.
In the symplectic and orthogonal cases define $\Gamma = \Sp_{2n}(q)$ and $\Gamma = \Omega_n^\vareps(q)$, respectively.
Then $\Gamma$ is a group as in \eqref{eq:almost-quasisimple} such that $\Gamma / Z = G$, where $Z = \Z(\Gamma)$.
The action of $G$ lifts to an action of $\Gamma$ with point stabilizer equal to some maximal subgroup $M$ not containing $\Gamma'$, since $\Gamma'$ is the preimage of $\Soc(G)$.
By \Cref{thm:main-classical-quasisimple}, $\derang(\Gamma, \Omega) \gg 1$.

Since the obvious map from conjugacy classes of $\Gamma$ to conjugacy classes of $G$ is no worse than $|Z|$-to-$1$,
the number of conjugacy classes of $G$ containing derangements is at least $k(\Gamma) \derang(\Gamma, \Omega) / |Z|$.
To complete the proof it therefore suffices to observe that $k(G) \ll k(\Gamma) / |Z|$.
This is trivial if $|Z|$ is bounded, since $k(G) \le k(\Gamma)$, and in the linear and unitary cases it follows from \Cref{lem:k-for-GL-and-PGL} since $|Z| = q \mp 1$.
\end{proof}

We can also prove \Cref{t:semisimple_asymptotic}.

\begin{proof}[Proof of \Cref{t:semisimple_asymptotic}]
    Let $G$ be a finite simple group of rank $r$ and level $q$, and assume that $\derangss(G,\Omega)\leq 1-\eps$. Assume first that $G$ is classical. We first work with a group $L$ as in \eqref{eq:almost-quasisimple} with $G=L/\Z(L)$, and we assume $\derangss(L,\Omega)\leq 1-\eps$; at the end of the proof, we will indicate how to deduce the result for $G$. Assume we are not in case (ii) of the statement. By \Cref{t_c2,t_c3,thm:main-quasisimple-classes-4+}, there exists $f_2(\eps)$ such that if $r\geq f_2(\eps)$ then $L_\alpha$ is of class $\ca C_1$. Assume that this is the case, so $L_\alpha$ is the stabilizer of a $k$-subspace of the natural module $V$. By \Cref{t_c1,r:tends_to_one}\phantomref{r:tends_to_one}, there exists $f_1(\eps)$ such that either $k$ or $\dim V - k$ is at most $f_1(\eps)$. This is case (i) in the statement. Assume now that $r \le f_2(\eps)$. We now include the case where $G=L$ is exceptional. By \Cref{prop:bounded-rank} and by \cite{fulman2003derangements}, there exists $f_3(\eps)>0$ such that either $|L| \le f_3(\eps)$ or $L_\alpha$ is of maximal rank, so either (iii) or (iv) holds.

    This concludes the proof, except that for classical groups we have worked with $L$ rather than with $G=L/Z$, where $Z=\Z(L)$. Assuming that $\derangss(G,\Omega)\le 1-\eps$, it follows that $\derangss(L,\Omega)\le 1-\eps/|Z|$. In particular, if $|Z|$ is bounded the result for $G$ follows from the result for $L$, proved in the previous paragraph. The remaining case is $G=\PSL^\pm_{n}(q)$. As observed in \Cref{r:restriction}, almost all semisimple classes of $G$ lift to $|Z|$ classes of $L$, which implies that $\derangss(G,\Omega)=\derangss(L,\Omega)+o(1)$, so again the result for $G$ follows from the result for $L$.
\end{proof}

\section{Almost simple groups}

The main extra ingredient in the proof of \Cref{t_main} is a bound on the number of conjugacy classes of an almost simple group of Lie type, which is of independent interest (see \Cref{thm:conj_classes_almost_simple} below).

We need an auxiliary result, and we begin by recalling some standard material about algebraic groups (refer to \cite{malle_testerman}).
Let $X$ be a simple linear algebraic group over $\overline{\F_p}$ which is either simply connected or adjoint.
Let $r$ denote the rank of $X$. Fix a maximal torus $T$ and a Borel subgroup $B$ containing $T$. Let $X(T) = \Hom(T, \mathbf G_m)$ be the character group of $T$, and let $W = \mathrm N_X(T)/T$ be the Weyl group of $X$ with respect to $T$. We can choose a scalar product $(\cdot, \cdot)$ on $X(T)\otimes_\Zint \R$ preserved by $W$.
Let $\Phi$ be the root system of $X$ with respect to $T$, and let $\Delta\subset \Phi$ be the base with respect to $(T,B)$. Let $U_\alpha$~($\alpha \in \Phi$) be the corresponding root subgroups. Fix isomorphisms $u_\alpha \colon \mathbf G_a \to U_\alpha$ for every $\alpha \in \Phi$ such that
\[
    (x^{u_\alpha})^t = (xt^\alpha)^{u_\alpha} \qquad (x \in \mathbf G_a, \alpha \in \Phi, t\in T).
\]
We will use the notation
\begin{equation}
\label{eq:endomorphisms}
\varphi, \gamma_2, \gamma_3, \rho
\end{equation}
for certain endomorphisms of $X$. First, $\varphi$ denotes the Frobenius endomorphism $x\mapsto x^p$ of $X$ stabilizing $(T,B)$ (see \cite{malle_testerman}*{Theorem~16.5} or \cite{gorenstein}*{Theorem 1.15.4(a)}). More precisely, $\varphi$ acts on the root subgroups according to
\[
	(x^{u_\alpha})^\varphi = (x^p)^{u_\alpha} \qquad (x \in \mathbf G_a, \alpha \in \Phi).
\]
and the torus according to
\[
	(t^{\varphi})^\alpha = (t^\alpha)^p \qquad (t \in T, \alpha \in \Phi).
\]
Next, consider a permutation $\tau$ of $\Delta$ corresponding to a symmetry of the Dynkin diagram and extending to an isometry of $X(T) \otimes_\Zint \R$.
The \emph{graph automorphism} $\gamma_\tau$ is the automorphism of $X$ commuting with $\varphi_p$, stabilizing $(T,B)$, and acting on the root subgroups according to
\[
(x^{u_\alpha})^{\gamma_\tau} = x^{u_{\alpha \tau}} \qquad (x \in \mathbf G_a, \alpha \in \Phi)
\]
(see \cite{gorenstein}*{Theorem 1.15.2}).
In particular, $|\gamma_\tau| = |\tau|$ and $\gamma_\tau$ induces the permutation $\tau$ on the set of root subgroups.
If $X$ is of type $A_{r+1}~(r\geq 2)$, $D_r~(r\geq 4)$, or $E_6$ we let $\gamma_2=\gamma_\tau$ where $|\tau|=2$.
If $X$ is of type $D_4$ we similarly let $\gamma_3=\gamma_{\tau}$ where $|\tau|=3$.
In all other cases, we define $\gamma_2=\gamma_3=1$.
Note that if $X = D_4$ then $\gen{\gamma_2, \gamma_3}\cong S_3$.

Finally, when $X=B_2$, $F_4$, or $G_2$, with $p=2$, $2$, or $3$ respectively, $\rho$ is the \emph{graph-field endomorphism} of $X$ as in \cite{gorenstein}*{Theorem~1.15.4(b)}.
In particular, $\rho^2=\varphi$.
In all other cases define $\rho = 1$.

\begin{lemma}
\label{l:stable}
Let $X=A_{r+1}~(r\geq 2)$ or $D_r~(r\geq 4)$ or $E_r~(r=6)$ be simply connected, and let $1 \ne \gamma \in \{\gamma_2, \gamma_3\}$. Let $q=p^f$ and $\sigma=\varphi^f$, so $X_\sigma$ has level $q$. Then the number of $\gamma$-stable semisimple conjugacy classes of $X_\sigma$ is $ q^\ell$, where $\ell$ is the number of orbits of $\gamma$ on $\Delta$.
\end{lemma}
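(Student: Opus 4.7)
My plan is to use Steinberg's theorem and Chevalley's theorem on polynomial invariants to reduce the count to a count of fixed points on affine space. For $X$ simply connected, Steinberg's theorem combined with the Lang--Steinberg theorem and the connectedness of centralizers of semisimple elements (valid for simply connected $X$) gives a bijection between semisimple conjugacy classes of $X_\sigma$ and the $\sigma$-fixed points $(T/\!/W)^\sigma$, where $T/\!/W := \operatorname{Spec} \overline{\F_p}[T]^W$. Moreover, by Chevalley's theorem on polynomial invariants, $T/\!/W \cong \mathbf{A}^r$ as affine varieties, and an explicit isomorphism is given by the fundamental characters $f_1, \dots, f_r$, where $f_i$ is the trace of the $i$-th fundamental representation of $X$.

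The next step is to check that the actions of $\sigma$ and $\gamma$ on $T/\!/W \cong \mathbf{A}^r$ become transparent in these coordinates. Since the fundamental representations of $X$ are defined over $\F_p$, each $f_i$ is a polynomial in the matrix coordinates with $\F_p$-coefficients, so $\varphi$ acts on $f_i$ by raising it to the $p$-th power. Hence $\sigma = \varphi^f$ acts on $\mathbf{A}^r$ by $x_i \mapsto x_i^q$ on each coordinate, and therefore $(T/\!/W)^\sigma$ identifies with $\F_q^r$. On the other hand $\gamma$ preserves $T$, commutes with $\sigma$, and sends the $i$-th fundamental representation to the $\tau(i)$-th, since $\tau$ permutes the simple roots and hence the fundamental weights. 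Therefore $\gamma$ acts on $\mathbf{A}^r$ by permuting coordinates according to $\tau$.

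Once the actions are translated into these explicit coordinates, the count is immediate: a point $(x_1, \dots, x_r) \in \F_q^r$ is fixed by $\gamma$ if and only if $x_i = x_{\tau(i)}$ for all $i$, which is equivalent to choosing one value in $\F_q$ for each $\tau$-orbit on $\Delta$, giving exactly $q^\ell$ fixed points. I expect the main obstacle to be not conceptual but careful bookkeeping: one must verify that the Steinberg--Chevalley bijection really intertwines the actions of $\sigma$ and $\gamma$ on $X_\sigma$ with the explicit actions on $\mathbf{A}^r$ described above. Both of these compatibilities are standard, but require chasing the definitions of $\varphi$, $\gamma$, and the fundamental representations carefully.
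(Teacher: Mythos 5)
Your approach is correct but genuinely different from the paper's. You go through Steinberg's adjoint quotient: semisimple $X_\sigma$-classes biject with $(T/\!/W)^\sigma$ (via Lang--Steinberg plus connectedness of semisimple centralizers in the simply connected group), and the Chevalley--Steinberg restriction theorem identifies $T/\!/W$ with $\mathbf{A}^r$ via fundamental characters, on which $\sigma$ acts as the $q$-power Frobenius and $\gamma$ acts by permuting coordinates according to $\tau$; the fixed-point count is then immediate. The paper instead runs a representation-theoretic ``Brauer character'' argument: irreducible $\overline{\F_p}$-representations of $X_\sigma$ are parametrized by $q$-restricted weight tuples $(c_1,\dots,c_r)\in\{0,\dots,q-1\}^r$, $\gamma$ permutes these tuples exactly as it permutes $\Delta$, and then one passes from representations to semisimple classes by observing that the indicator functions of $p'$-classes and the irreducible Brauer characters are two $\gamma$-stable bases of the same space, so $\gamma$ fixes equally many elements of each (by invariance of trace). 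Both arguments ultimately invoke a deep theorem of Steinberg (tensor-product/restricted-weight parametrization on one side, the adjoint-quotient/restriction theorem on the other), so neither is obviously more elementary. Your route is more geometric and makes the count visually transparent on affine space, but it requires verifying the two equivariance statements you flag (that the Steinberg map intertwines $\gamma$ with the coordinate permutation $\tau$, and $\sigma$ with the $q$-power Frobenius), which in turn rest on $\F_p$-rationality of the fundamental characters and on $\gamma$ preserving $T$ and permuting the fundamental weights. The paper's route cleverly sidesteps the geometry of the quotient by a linear-algebra trace argument, at the cost of importing modular representation theory (Brauer characters as a basis for class functions on $p'$-elements). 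Both are sound; the two proofs would reinforce each other well.
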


\begin{proof}
Let $\Delta =\{\alpha_1, \ldots, \alpha_r\}$. Let $\{\lambda_1, \ldots, \lambda_r\}$ be the set of fundamental dominant weights with respect to $\Delta$; i.e., $(\lambda_i, \alpha^*_j) = \delta_{ij}$, where $\alpha^*_j = 2\alpha_j/(\alpha_j, \alpha_j)$. Since $\gamma$ induces an isometry of $X(T) \otimes_\Zint \R$, it acts in the same way on $\Delta$ and on $\{\lambda_1, \ldots, \lambda_r\}$.

Equivalence classes of irreducible representations of $X_\sigma$ over $\overline{\F_p}$ are parametrized by tuples $(c_1, \ldots, c_r)$, where $0\leq c_i \leq q-1$ for every $i$ (see \cite{kleidman_liebeck}*{Theorem 5.4.1}). It follows from the previous paragraph and from \cite{kleidman_liebeck}*{Proposition 5.4.2(ii)}  that the action of $\gamma$ on the set of representations corresponds to the action on tuples where $\gamma$ permutes the coordinates as the elements of $\Delta$. In particular, the number of fixed points of $\gamma$ in this action is exactly $q^\ell$.

Consider the complex vector space $V$ of functions $X_\sigma \to \mathbf C$ supported on $p'$-elements (i.e., semisimple elements). Both the indicator functions on classes of $p'$-elements and the irreducible Brauer characters of $X_\sigma$ are bases for $V$ (see \cite{navarro1998characters}*{Corollary 2.10}); call them $B_1$ and $B_2$, respectively.
Now $\gamma$ acts on $V$ and preserves both bases $B_1, B_2$.
By invariance of trace it follows that $\gamma$ fixes the same number of points in $B_1$ as in $B_2$.
The number of fixed points on $B_2$ is $q^\ell$, while the number of fixed points on $B_1$ is the number of $\gamma$-stable semisimple conjugacy classes of $X_\sigma$. This finishes the proof.
\end{proof}

\begin{lemma}
Let $S = (X_\sigma)'$ be a finite simple group of Lie type of untwisted rank $r$ and level $q$.
Let $I = \Inndiag(S) = X_\sigma$ and $A = \Aut(S)$.
The number of $I$-classes in $A \sm I$ is bounded by
\[
    O(q^{r-1+a} / |I:S|),
\]
where $a = 1/2$ for $\PSL_2(q)$ and $a = 0$ in all other cases.
\end{lemma}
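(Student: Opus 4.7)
The plan is to decompose $A \setminus I$ into the cosets $I\alpha$, as $\alpha$ ranges over nontrivial coset representatives of $A/I$, and to bound the number of $I$-conjugacy classes in each coset.

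First, I note that $A/I$ is generated by the image of the Frobenius $\varphi$ together with any graph or graph-field automorphisms from \eqref{eq:endomorphisms}, so $|A:I| \ll \log q$. Every nontrivial coset admits a representative of the form $\alpha = \gamma \varphi^e$ with $e \ge 1$, which is a Steinberg endomorphism of $X$ commuting with $\sigma = \varphi^f$. For a pure graph automorphism $\gamma$ we may replace it by $\gamma \sigma$; this gives the same coset because $\sigma$ induces the trivial automorphism on $I = X_\sigma$ (hence $\sigma \in I$ inside $A = \Aut(S)$).

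Second, I apply Shintani descent (\Cref{l_shintani_descent}) with $\sigma_1 = \sigma$ and $\sigma_2 = \alpha$. This puts the $I$-conjugacy classes in $I\alpha$ in bijection with $X_\alpha$-classes in the coset $X_\alpha \sigma \subseteq X_\alpha \rtimes \langle \sigma \rangle$, and the latter is at most $k(X_\alpha)$. The group $X_\alpha$ is a (possibly twisted) finite group of Lie type whose level is $p^e$ and whose rank $r_\alpha$ equals the number of $\gamma$-orbits on $\Delta$: that is $r$ when $\gamma = 1$, at most $r - 1$ when $\gamma = \gamma_2$, and at most $r/3 + O(1)$ when $\gamma = \gamma_3$.

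Third, I invoke \Cref{t:fulman_guralnick_bound} to bound $k(X_\alpha) \ll p^{e r_\alpha}$. Summing over coset representatives and absorbing the logarithmic factor $|A:I|$ into the implicit constant, the dominant contribution comes either from a pure field automorphism $\alpha = \varphi^{f/\ell}$ of prime order $\ell$ (smallest prime divisor of $f$), giving $q^{r/\ell} \le q^{r/2}$, or from a graph or graph-field automorphism of order $2$ in $A/I$, giving $q^{r_\alpha} \le q^{r-1}$. In all cases, for $r \ge 2$ the bound is $O(q^{r-1})$. When $r = 1$ (that is, $S = \PSL_2(q)$) there is no graph automorphism and the dominant contribution is $q^{1/2}$ from a field automorphism of order~$2$, producing the claimed $a = 1/2$ bonus.

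Finally, I extract the factor $1/|I:S|$ by performing the Shintani descent on the simply connected cover $X^{\mathrm{sc}}$ and descending to the adjoint form via the central isogeny $X^{\mathrm{sc}} \to X$, whose kernel on $\sigma$-fixed points has order $|I:S|$. For the (mostly semisimple) classes that dominate the count, each $I$-class is the image of exactly $|I:S|$ classes of $X^{\mathrm{sc}}_\sigma$, which saves the desired factor; \Cref{l:stable} serves as the prototype for this passage in the pure-graph case. The main obstacle is verifying this saving uniformly across Lie types and handling the exceptional cases of triality on $D_4$ and of the graph-field endomorphism $\rho$, where the precise identification of $X_\alpha$ via Shintani requires extra care.
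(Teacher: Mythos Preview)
Your overall strategy of decomposing $A \setminus I$ into cosets $I\alpha$ and applying Shintani descent to each is the same as the paper's, but there is a genuine gap in the pure graph automorphism case that makes the argument fail as written.

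The error is your claim that $k(X_\alpha) \ll p^{e r_\alpha}$ with $r_\alpha$ equal to the number of $\gamma$-orbits on $\Delta$. The rank $r$ in \Cref{t:fulman_guralnick_bound} is the rank of the ambient algebraic group $X$, i.e., the \emph{untwisted} rank, not the twisted rank. For example if $X$ has type $A_r$ and $\alpha = \gamma_2 \varphi^e$, then $X_\alpha \cong \PGU_{r+1}(p^e)$ and $k(X_\alpha) \asymp (p^e)^r$, not $(p^e)^{\lceil r/2 \rceil}$. Consequently, for the pure graph coset $I\gamma$ (where you take $\alpha = \gamma\sigma$ so that $e = f$ and $p^e = q$) your Shintani bound is only $k(X_{\gamma\sigma}) \asymp q^r$, which is too large by a factor of $q$. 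The paper handles this coset separately (its Case~(ii)): for type $A$ it uses the direct bound $\ll q^{\lfloor n/2 \rfloor}$ from \cite{fulman2018extension_field}*{Lemma~4.9} on inverse-transpose-stable classes, and for types $D$ and $E_6$ it invokes \Cref{l:stable} via \Cref{l_shintani_analogue} to count $\gamma$-stable semisimple classes in the simply connected cover, getting $q^\ell$ with $\ell \le r-1$. Your final paragraph gestures at \Cref{l:stable} but only as a mechanism for the $|I:S|$ saving, not as the actual replacement for Shintani in this case.

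Two further points. First, the phrase ``absorbing the logarithmic factor $|A:I|$'' is not right: $|A:I| \asymp \log q$ is unbounded, so you must sum the coset contributions and show the sum is geometric. The paper does this explicitly in its Case~(iii), grouping cosets $I\varphi^i$ by $d = \gcd(i,f)$ and bounding $\sum_{e \mid f, e > 1} \phi(e) q^{ur/e}$; this requires isolating a handful of exceptional $(r,u,e)$ where the bound fails and treating them by hand. Second, your extraction of $1/|I:S|$ via the simply connected cover is not how the paper proceeds: the paper simply arranges each case-by-case bound to be $O(q^{r-1}/r)$, and then uses $|I:S| \le r+1$ in type $A$ and $|I:S| \le 4$ elsewhere.
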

\begin{proof}
Write $q^u=p^f$, where $u \in \{1, 2, 3\}$ is the order of the permutation $\tau_\sigma$ induced by $\sigma$ on the Dynkin diagram.
We call $X_\sigma$ \emph{twisted} if $u > 1$.
Fix a $\sigma$-stable maximal torus $T$ and a $\sigma$-stable Borel subgroup $B$ containing $T$, and let $\varphi, \gamma_2, \gamma_3, \rho$ be as in \Cref{eq:endomorphisms} corresponding to this choice. We may assume that one of the following holds (see \cite{gorenstein}*{Theorem~2.2.3}):
\begin{itemize}[$\diamond$]
    \item $X_\sigma$ is untwisted and $\sigma=\varphi^f$,
    \item $X_\sigma$ is twisted but not Suzuki or Ree and $\sigma =\gamma_u\varphi^{f/u}$, or
    \item $X_\sigma$ is Suzuki or Ree and $\sigma=\rho^f$.
\end{itemize}
In all cases the quotient $A / I$ is an abelian group generated by the images of $\gamma_2$, $\gamma_3$, $\varphi$, and $\rho$
(see \cite{gorenstein}*{Theorem~2.5.12}).

The main tools used in this proof are \Cref{l_shintani_analogue,l_shintani_descent}.
Note particularly that \Cref{l_shintani_analogue} implies that the number of $I$-classes in a coset $Ix$ is the same as that in $Iy$ if $\gen x = \gen y$.
More generally, the number of $I$-classes in $Ix$ is at most that in $Iy$ whenever $y \in \gen x$.
Therefore it suffices to bound the number of $I$-classes in
\begin{enumerate}[(i)]
    \item $I \gamma_v \varphi^{f/v}$ $(v \in \{2, 3\})$, $\gamma_v \ne 1$, $X_\sigma$ untwisted,
    \item $I \gamma_v$ ($v \in \{2, 3\}$), $\gamma_v \ne 1$, $X_\sigma$ untwisted,
    \item $I \gen {\varphi} \sm I$ where $X_\sigma$ is not ${}^uB_2(2^f)$, ${}^uF_4(2^f)$, or ${}^uG_2(3^f)$ for any $u\in\{1,2\}$.
    \item $I \gen {\rho} \sm I$ where $X_\sigma$ is ${}^uB_2(2^f)$, ${}^uF_4(2^f)$, or ${}^uG_2(3^f)$ for some  $u\in \{1,2\}$.
\end{enumerate}

\textbf{Case (i):}
Let $\tau = \gamma_v \varphi^{f/v}$.
Applying \Cref{l_shintani_descent}, the number of $X_\sigma$-classes in $X_\sigma \tau$ is equal to the number of $X_\tau$-classes in $X_\tau \sigma$.
Since $X_\tau$ has rank $r$ and level $p^{f/v} = q^{1/v}$,
we have $k(X_\tau) \asymp q^{r/v} \ll q^{r-1} / r$ since $r, v \ge 2$.

\textbf{Case (ii):}
Here $X_\sigma$ is one of $A_r(q)$ ($r \ge 2)$, $D_r(q)$ ($r \ge 4$), $E_6(q)$.

Assume first that $X$ has type $A_r$ ($r \ge 2$) and let $n = r+1$, so $I = \PGL_n(q)$.
Taking $T$ and $B$ to the standard upper-triangular choices, we have that $\gamma = \gamma_2$ is the image of the inverse-transpose map $\tilde \gamma \in \Aut(\GL_n(q))$ composed with an inner automorphism.
Therefore it suffices to bound the number of $\GL_n(q)$-classes in $\GL_n(q)\tilde \gamma$. By \cite{fulman2018extension_field}*{Lemma~4.9}, this number is $\ll q^{\floor{n/2}} \ll q^{n-2}/(n-1) = q^{r-1} / r$, as required.

Consider now all the other cases; in particular, $|I:S|\leq 4$. Let $L$ be the group of simply connected type projecting to $S$, and let $\tilde{\gamma_v}$ be a lift of $\gamma_v$ to $\Aut(L)$. We first bound the number of classes in $L\tilde{\gamma_v}$. By \Cref{l_shintani_analogue}, their number is the same as the number of $\tilde{\gamma_v}$-stable classes in $L$. By \Cref{l:stable}, the number of semisimple such classes is $q^{\ell}$, where $\ell$ is the number of orbits of $\gamma_v$ on $\Delta$. The number of non-semisimple such classes is $O(q^{r-1})$, hence overall we get a bound $O(q^{r-1})$. This gives an upper bound for the number of classes in $S\gamma_v$. Now, consider any coset $Sg\gamma_v$ where $g\in I$. Using again \Cref{l_shintani_analogue} and the fact that there are $\ll q^{r-1}$ non-semisimple classes, we are reduced to count the number of $g\gamma_v$-stable semisimple classes in $S$. Any semisimple $S$-class is $I$-stable (see \cite{seitz1982generation}*{(2.12)}), so it is $g\gamma_v$-stable if and only if it is $\gamma_v$-stable, so we are reduced to the coset $S\gamma_v$, which we already considered. Given that there $|I:S| \ll 1$ possibilities for the coset $Sg\gamma_v$, we are done.

\textbf{Case (iii):}
Let $x = \varphi|_S$ and note that $|x| = f$.
By \Cref{l_shintani_analogue}, the number of $X_\sigma$-classes in $X_\sigma x^i$ is the same as the number of $X_\sigma$-classes in $X_\sigma x^d$ where $d = \gcd(i, f)$.
By \Cref{l_shintani_descent}, the number of $X_\sigma$-classes in $X_{\sigma}x^d$ is the same as the number of $X_{\varphi^d}$-classes in $X_{\varphi^d}\sigma$. There are at most $k(X_{\varphi^d})$ of these by \Cref{l_shintani_analogue}. Since the level of $X_{\varphi^d}$ is equal to $p^d$, by \Cref{t:fulman_guralnick_bound} we have $k(X_{\varphi^d}) \asymp p^{d r}$.
Thus the contribution from all $i$ with $\gcd(i, f) = d = f/e$ is bounded by
\[
    \phi(e) p^{dr} = \phi(e) q^{ur / e}.
\]
Summing over $e$ gives a bound of the form $O(q^{r-1+a} / r)$ provided that we exclude the cases
\begin{equation}
    \label{eq:rue}
    (r, u, e) =
    (r, 2, 2), (2, 2, 3),
    (4, 3, 2), (4, 3, 3).
\end{equation}
We have to deal with these cases separately.

Consider one of the cases in \eqref{eq:rue}.
In particular $X_\sigma$ is twisted, so $\sigma = \gamma_u \varphi^{f/u}$.
By the argument above we need to count $X_{\varphi^d}$-classes in $X_{\varphi^d} \gamma_u \varphi^{f/u}$.
This situation was considered in cases (i) and (ii) above, where we gave a bound $\ll q^{r-1}/r$. Here the level is $p^d$, so the number of classes is bounded by $\ll p^{d(r-1)} / r = q^{u(r-1)/e} / r$.
This leaves only the case $(r, u, e) = (4, 3, 2)$.
In this case we are counting $X_{\varphi^{f/2}}$-classes in $X_{\varphi^{f/2}} \tau$, where $\tau = \gamma_3 \varphi^{f/3}$ and $6 \mid f$.
Note that $X_{\varphi^{f/2}}$ has rank $4$ and level $q^{3/2}$, and that $\varphi^{f/3}$ is nontrivial on $X_{\varphi^{f/2}}$.
In this case we saw in (i) above that actually the number of $X_{\varphi^{f/2}}$-classes in $X_{\varphi^{f/2}} \tau$ is $\ll (q^{3/2})^{4/3} = q^2$, as required.

\textbf{Case (iv):}
Let $x = \rho|_S$ and note that $|\rho| = 2f/u$.
Arguing as in case (iii), the number of $X_\sigma$-classes in $X_\sigma x^i$ is equal to the number of $X_\sigma$-classes in $X_\sigma x^d$ where $d = \gcd(i, 2f / u)$, which is the same as the number of $X_{\rho^d}$-classes in $X_{\rho^d} \sigma$, which is at most $k(X_{\rho^d})$.
The level of $X_{\rho^d}$ is $p^{d/2}$, so $k(X_{\rho^d}) \asymp p^{dr/2}$ by \Cref{t:fulman_guralnick_bound}.
Thus the contribution from all $i$ with $\gcd(i, 2f/u) = d = 2f/ue$ is bounded by
\[
    \phi(e) p^{dr/2} = \phi(e) q^{r/e},
\]
which is an acceptable bound since $r, e \ge 2$ and $|I:S| \ll 1$.
This concludes the proof in all cases.
\end{proof}

\begin{remark}
    The previous lemma is a slight strengthening of \cite{fulman2012_conjugacy_classes}*{Lemma~5.4}.
    Since it is central to our result we have given more complete details.
\end{remark}

\begin{theorem}
\label{thm:conj_classes_almost_simple}
Let $G$ be an almost simple group of Lie type with socle $S$, untwisted rank $r$, and level $q$.
Let $I = \Inndiag(S)$ and $N = G \cap I$. Then
\[
k(G) = \frac{k(N)}{|G:N|} + O(q^{r-1+a}),
\]
where $a=1/2$ if $S \cong \PSL_2(q)$ and $a=0$ otherwise.
\end{theorem}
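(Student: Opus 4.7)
The plan is to decompose $k(G)$ according to whether classes lie inside $N$ or outside. Since $I \trianglelefteq \Aut(S)$ and $G \le \Aut(S)$, we have $N = G \cap I \trianglelefteq G$, so this separation is meaningful. My goal will be to show that the classes in $N$ contribute the main term $k(N)/|G:N|$ with error $O(q^{r-1+a})/|G:N|$, while the classes in $G \setminus N$ contribute at most $O(q^{r-1+a})$.

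For the classes inside $N$, the $G$-classes are in bijection with $G/N$-orbits on the set $X$ of $N$-conjugacy classes in $N$ (with $G/N$ acting by conjugation). Burnside's orbit-counting lemma gives
\[
    \#\{G\text{-classes in }N\} = \frac{1}{|G:N|}\Bigl(k(N) + \sum_{gN \ne N}|\mathrm{Fix}_X(gN)|\Bigr),
\]
and by \Cref{l_shintani_analogue} applied inside $\langle N, g\rangle$, each $|\mathrm{Fix}_X(gN)|$ equals the number of $N$-classes in the coset $Ng$. For the classes in $G \setminus N$, since each $G$-class is a union of $N$-classes, their count is bounded by $\sum_{gN \ne N}\#\{N\text{-classes in }Ng\}$, i.e.\ the same quantity $\Sigma$ appearing in the Burnside sum.

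Thus everything reduces to bounding $\Sigma$. For this, I will refine each $N$-class in $Ng$ to an $I$-class in the coset $Ig \subseteq A = \Aut(S)$. Since $N \le I$, an $I$-class decomposes into at most $|I:N|$ $N$-classes (for example, via double cosets $N\backslash I/C_I(g)$). The injection $G/N \hookrightarrow A/I$ sends distinct non-trivial $N$-cosets in $G$ to distinct $I$-cosets in $A \setminus I$, so
\[
    \Sigma \le |I:N| \cdot \#\{I\text{-classes in }A \setminus I\} = |I:N| \cdot O(q^{r-1+a}/|I:S|),
\]
where the last equality is the preceding lemma. Since $S \le G \cap I = N$, we have $|I:N| \le |I:S|$, and the bound collapses to $\Sigma = O(q^{r-1+a})$.

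The principal delicate point is precisely this cancellation: without the factor $1/|I:S|$ supplied by the preceding lemma, the estimate for $\Sigma$ would be worse by a factor of $|I:S|$, which can be as large as $n$ for groups of linear or unitary type. Once this cancellation is in hand, combining the Burnside count (which gives $\#\{G\text{-classes in }N\} = k(N)/|G:N| + O(q^{r-1+a})/|G:N|$) with the estimate $\#\{G\text{-classes in }G\setminus N\} \le \Sigma = O(q^{r-1+a})$ yields $k(G) = k(N)/|G:N| + O(q^{r-1+a})$, as required.
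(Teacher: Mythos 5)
Your proposal is correct and takes essentially the same approach as the paper: both reduce the count to the number of $N$-classes in $G\setminus N$ via the Shintani-type \Cref{l_shintani_analogue}, and both obtain the crucial cancellation from refining $I$-classes to $N$-classes (factor $|I:N|\le|I:S|$) against the preceding lemma's $O(q^{r-1+a}/|I:S|)$ bound. The only cosmetic difference is that you invoke Burnside's lemma where the paper argues that $G/N$ acts semiregularly away from an $O(q^{r-1+a})$-sized set of $N$-classes; these are interchangeable reformulations of the same computation.
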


\begin{proof}
Since $|I:N| \le |I:S|$, the previous result implies that the number of $N$-classes in $G \sm N$ is $O(q^{r-1+a})$.
Applying \Cref{l_shintani_analogue}, we get the same bound for the number of conjugacy classes of $N$ that are stable under any $a \in G \sm N$.
Observe that $G/N$ acts semiregularly on the remaining classes, so permutes them in orbits of size $|G:N|$. This proves the theorem.
\end{proof}

Now we can finish the proof of \Cref{t_main}.
Assume now that $G$ is any almost simple group of Lie type with socle $S$. Let $N=G\cap \Inndiag(S)$. \Cref{thm:conj_classes_almost_simple} gives that $k(G)\asymp k(N)/{|G:N|}$, which implies $\derang(G,\Omega)\gg\derang(N,\Omega)$,
because each $G$-conjugacy class contained in $N$ splits into at most $|G:N|$ $N$-conjugacy classes.  Since $G$ is primitive and $N$ is normal in $G$, we have that $N$ is transitive. Let $\Delta$ be a maximal system of imprimitivity for $N$. Since $S$ is transitive on $\Omega$, $N$ acts primitively and faithfully on $\Delta$. We have $\derang(G, \Omega) \gg \derang(N, \Omega) \geq \derang(N,\Delta)\gg 1$ by \Cref{prop:diag-almost-simple-case}, which concludes the proof.

\providecommand*{\Lpolish}[0]{}
\bibliography{references}

\end{document}